\documentclass[11pt]{article}
\usepackage{amsmath,amsthm,amssymb}
\usepackage[T1]{fontenc}
\usepackage[utf8]{inputenc}
\usepackage[dvipdf]{graphicx}
\usepackage{color}
\usepackage{epstopdf}
\usepackage{dsfont}
\usepackage{mathtools}
\usepackage{enumerate}
\usepackage{xcolor}
\usepackage{mathrsfs}
\usepackage[normalem]{ulem}
\usepackage[colorlinks=true,citecolor=red,linkcolor=db,urlcolor=blue,pdfstartview=FitH]{hyperref}
\usepackage[numbers]{natbib}

\allowdisplaybreaks[4]

\providecommand{\keywords}[1]
{
  \small	
  \textbf{\textit{Keywords:}} #1
}

\providecommand{\amsclassification}[1]
{
  \par\smallskip
  \small
  \textbf{\textit{2020 Mathematics Subject Classification:}} #1
}

\definecolor{db}{RGB}{0, 0, 130}
\definecolor{rp}{rgb}{0.25, 0, 0.75}
\definecolor{dg}{rgb}{0, 0.6, 0}

\newtheorem{theorem}{Theorem}[section]
\newtheorem*{acknowledgement}{Acknowledgement}
\newtheorem{definition}{Definition}[section]
\newtheorem{prop}[definition]{Proposition}

\newtheorem{assumption}[definition]{Assumption}
\newtheorem{lemma}[definition]{Lemma}

\newtheorem{proposition}[definition]{Proposition}
\newtheorem{remark}[definition]{Remark}

\numberwithin{equation}{section}

\def\pa{\partial}
\def\d{\delta}

\def\a{\alpha}
\def\e{\epsilon}

\def\Ac{\mathcal A}
\def\Jc{\mathcal J}
\def\Bc{\mathcal B}

\def\Fc{\mathcal F}

\def\Hc{\mathcal H}
\def\Lc{\mathcal L}
\def\Mc{\mathcal M}

\def\Pc{\mathcal P}

\def\Wc{\mathcal W}

\def\ut{\tilde{u}}
\def\vt{\tilde{v}}

\def\bt{\tilde{b}}

\def\E{\mathbb{E}}
\def\F{\mathbb{F}}

\def\N{\mathbb N}

\def\R{\mathbb R}

\def\vb{\bar{v}}

\def\ub{\bar{u}}
\def\Jb{\bar{J}}
\def\mub{\bar{\mu}}
\def\mut{\tilde{\mu}}

\def\eps{\epsilon}

\def\x{\times}
\def\Om{\Omega}

\newcommand{\norm}[1]{\left\lVert #1\right\rVert}
\newcommand{\abs}[1]{\left\lvert #1\right\rvert}

\def \endproof{\hbox{ }\hfill$\Box$}

\title{A comparison principle for Wasserstein PDEs with state- and law-dependent common noise}
\author{Erhan Bayraktar
        \footnote{Department of Mathematics, University of Michigan. erhan@umich.edu. E. Bayraktar is partially supported by the NSF Grants DMS-2507940 and DMS-2406232 and by the Susan M. Smith Professorship.}
        \and Ibrahim Ekren
        \footnote{Department of Mathematics, University of Michigan. iekren@umich.edu. I. Ekren is partially supported by the NSF Grant DMS-2406240.}
        \and Xihao He
        \footnote{Department of Mathematics, University of Southern California. 
        xihaohe@usc.edu}
        \and Xin Zhang
        \footnote{Department of Finance and Risk Engineering, New York University. xz1662@nyu.edu. X. Zhang is partially supported by the NSF Grant DMS-2508556.}
        }
\date{\today}

\begin{document}
\maketitle

\begin{abstract}
We prove a comparison principle for a class of second-order Hamilton--Jacobi--Bellman equations on the Wasserstein space whose second-order term is generated by a general common-noise Hessian. The main difficulty is that the relevant second-order direction is induced by a state- and measure-dependent coefficient, so the associated perturbation of the measure is no longer a translation or a fixed state-dependent transformation. We introduce a nonlinear flow of measures
\[
    \partial_m\psi(x,\mu,m)
    =
    \sigma_0(\psi(x,\mu,m),\psi(\cdot,\mu,m)_\#\mu),
    \qquad
    \psi(x,\mu,0)=x,
\]
and use it to transform the Wasserstein-space equation into an augmented equation on $[0,T]\times \mathcal P_2(\mathbb R)\times\mathbb R$, where the general Hessian becomes an ordinary second derivative in the auxiliary variable. The construction may be viewed as a measure-dependent Lamperti transform: it removes the common-noise direction at the level of the equation, but unlike the classical one-dimensional Lamperti transform it permits degeneracy of the coefficient and dependence on the conditional law. We establish the spatial, measure-derivative, and negative-Sobolev estimates for this flow that are needed in the viscosity argument. Under structural assumptions on the transformed Hamiltonian, these estimates yield a Crandall--Ishii type comparison theorem for semicontinuous viscosity sub- and supersolutions. This gives, to the best of our knowledge, the first viscosity comparison framework of this kind for the filtering-driven equations considered here, and opens a new class of second-order PDEs on spaces of measures with state- and law-dependent common-noise directions. As an application, we identify the value function of a controlled stochastic filtering problem with state- and law-dependent common noise as the unique viscosity solution of its dynamic programming equation. We also explain how the same change-of-variable viewpoint applies to Zakai-type Kolmogorov equations on spaces of finite positive measures.
\end{abstract}

\keywords{Viscosity solutions; Wasserstein space; Hamilton--Jacobi--Bellman equations; comparison principle; common noise; stochastic filtering; mean field control.}

\amsclassification{Primary 35D40, 35R15; Secondary 49L25, 49N80, 60H30, 93E11.}

\tableofcontents
\section{Introduction}\label{sec:introduction}
Hamilton--Jacobi equations on spaces of probability measures appear naturally in stochastic control, mean field games, and partially observed control problems where the state variable of the dynamic programming equation is a law. The analytic theory combines ideas from viscosity solutions, optimal transport, and stochastic control. Derivatives with respect to measures in the sense of Lions provide a differential calculus on Wasserstein space; see the lectures of Lions~\cite{Lions} and the monographs~\cite{carmona_probabilistic_2018_vol1,carmona_probabilistic_2018_vol2,lions_annals,bensoussan_mean_2013}. Dynamic programming equations for McKean--Vlasov control and related mean field control problems were developed in, among others,~\cite{MR3630288,pham_bellman_2018,cosso2021master,BurzoniIgnazioReppenSoner2020}. On the master-equation side, Gangbo and M{\'e}sz{\'a}ros~\cite{GangboMeszaros2022} and Gangbo, M{\'e}sz{\'a}ros, Mou, and Zhang~\cite{GangboMeszarosMouZhang2022} developed global well-posedness theories based on displacement convexity and displacement monotonicity. A foundational second-order generator on Wasserstein space is the partial Laplacian of Chow and Gangbo~\cite{ChowGangbo2019}. Second-order PDEs on Wasserstein space also arise in finite-dimensional and particle approximations, master-equation methods for propagation of chaos, and learning problems under partial monitoring; see~\cite{GangboMayorgaSwiech2021,Talbi2024FiniteDimensional,BayraktarEkrenZhang2025Particle,CecchinDaudinJacksonMartini2024,BayraktarEkrenZhou2025CBO,BayraktarEkrenZhang2023Regret}. These equations are now a central object in the PDE approach to mean field control and games, but their viscosity theory is still considerably less mature than the finite-dimensional theory because the natural state space has neither local compactness nor a linear differentiable structure.

The presence of common noise leads to genuinely second-order equations on the space of probability measures. From the PDE point of view, the challenge is that the second-order term acts in directions generated by random perturbations of the conditional law. Classical viscosity methods in finite and infinite dimensions go back to~\cite{usersguide,lions1988viscosity1,lions1989viscosity2,lions1989viscosity3,soner1988hamilton}. Finite-dimensional approximation methods and intrinsic viscosity notions for Hamilton--Jacobi--Bellman equations on spaces of probability measures were developed in~\cite{GangboMayorgaSwiech2021,Talbi2024FiniteDimensional}. Recent comparison arguments based on Wasserstein-distance penalizations and smooth approximations include~\cite{Bertucci2025StochasticOT,BertucciLions2024Approx}. The smooth variational principle on Wasserstein space in~\cite{BayraktarEkrenZhang2023Variational} provides a related tool for optimization and viscosity arguments in the absence of local compactness. In the first-order mean field control setting, Soner and Yan~\cite{SonerYan2024Torus,SonerYan2024Eikonal} prove viscosity comparison and uniqueness results on Wasserstein space using Fourier and Sobolev representations of measure distances. In Wasserstein space, comparison principles for second-order and common-noise equations have recently been obtained in several settings; see, for example,~\cite{BaEkZh23,BEHZ,BCEQTZ25,cheung2023viscosity,DaJaSe23,SaBe24,zhou2024viscosity}. The works~\cite{BaEkZh23,DaJaSe23,SaBe24} treat important semilinear or structurally restricted Hamiltonians; \cite{BCEQTZ25} develops a fully second-order Crandall--Lions framework for mean field control with common noise; and \cite{BEHZ} proves comparison for semicontinuous solutions directly on the Wasserstein space. These works show that a successful comparison proof must combine a suitable penalization of probability measures, an Ishii-type lemma, and estimates that compensate for the weak compactness and lack of linear structure of the state space. They do not, however, cover the filtering-type equations treated in the present paper, where the common-noise direction itself is generated by a state- and measure-dependent coefficient and, in the Zakai formulation, the natural state space is a space of finite positive measures.

This paper studies a class of second-order equations whose Hessian direction is generated by a coefficient $\sigma_0(x,\mu)$ depending simultaneously on the state and on the law. This dependence is the main new feature, and it leads to a class of Wasserstein and finite-measure PDEs that, to the best of our knowledge, has not previously been treated by a Crandall--Ishii viscosity definition based on semijets and comparison. If $\sigma_0$ is constant, or if the induced transformation of the measure is fixed, the common-noise Hessian can often be reduced to a more standard second derivative. When $\sigma_0$ depends on the evolving conditional law, however, the perturbation of the measure is governed by a nonlinear transport equation coupled with its own pushforward. The relevant flow admits the equivalent formulations
\begin{align}
    \partial_m\psi(x,\mu,m)
    &=
    \partial_x\psi(x,\mu,m)\sigma_0(x,\mu)
    +
    \int_\mathbb R D_\mu\psi(x,\mu,m)(y)\sigma_0(y,\mu)\mu(dy),
    \qquad
    \psi(x,\mu,0)=x, \label{eq:intro-lions-flow}\\
    \partial_m\psi(x,\mu,m)
    &=
    \sigma_0(\psi(x,\mu,m),\mu_m),
    \qquad
    \mu_m=\psi(\cdot,\mu,m)_\#\mu,
    \qquad
    \psi(x,\mu,0)=x. \label{eq:intro-mkv-flow}
\end{align}
The first formulation identifies the Lions-derivative direction defining the Hessian, while the second gives a McKean--Vlasov characteristic representation. A key preliminary step is to prove that these formulations are equivalent and to obtain estimates for the forward and backward flows. This is not merely a technical rewriting. It is the measure-valued analogue of the classical Lamperti transform: one introduces a coordinate adapted to the noise coefficient and then writes the equation in that adapted coordinate. The distinction is that the coordinate itself depends on the current law and therefore must be estimated simultaneously in the state and measure variables.

Our first main contribution is a regularity theory for this measure-dependent change of variables. Under suitable smoothness assumptions on $\sigma_0$ and its flat derivative, we establish bounds for the spatial derivatives of the backward flow, its measure derivatives, and the stability of the induced pushforward in negative Sobolev norms. These estimates also imply Sobolev composition bounds for functions composed with the flow. They are essential because the comparison proof later tests the Hamiltonian against Fourier-Wasserstein penalizations, whose derivatives must remain controlled after the nonlinear change of variables.

Our second, and central, contribution is the Crandall--Ishii viscosity comparison principle for the transformed PDE. Given a candidate solution $u(t,\mu)$, define
\[
    \bar u(t,\mu,m)
    :=
    u(t,\psi(\cdot,\mu,m)_\#\mu).
\]
The flow is chosen so that the general Wasserstein Hessian $\mathcal H u$ is represented as $\partial_{mm}^2\bar u$ in the augmented variable. The original equation on $\mathcal P_2(\mathbb R)$ is thus transformed into an equation on $[0,T]\times\mathcal P_2(\mathbb R)\times\mathbb R$. We formulate viscosity sub- and supersolutions through the usual Crandall--Ishii semijet machinery on this augmented state space and prove comparison under structural assumptions on the Hamiltonian. This point is important: the definition is not a classical-solution or density-based formulation, and it does not rely on lifting the problem to a Hilbert space where comparison is already available. The proof follows the doubling-of-variables strategy and uses the flow estimates mentioned above. In this sense the transform plays two roles: probabilistically, it removes the common-noise direction from the conditional dynamics, and analytically, it converts a nonstandard Wasserstein Hessian into an ordinary second derivative for which an Ishii argument can be applied.

The comparison theorem applies to controlled stochastic filtering with common noise. In the separated formulation, the controlled state is the conditional law of the signal. We allow the common-noise coefficient to depend on both the signal state and this conditional law. Our main result in this direction is Theorem~\ref{thm:viscosity_property}: under the stated regularity and structural assumptions, the value function of the partially observed control problem is continuous and is the unique Crandall--Ishii viscosity solution of the associated Wasserstein HJB equation. The proof relies on a commutator estimate in negative Sobolev spaces. The closest dynamic programming predecessor for partial observation is the work of Bandini, Cosso, Fuhrman, and Pham~\cite{MR3907014}, which develops randomized filtering and a Bellman equation in Wasserstein space for partially observed control. The earlier infinite-dimensional HJB theory of Gozzi and {\'S}wi{\k e}ch~\cite{GozziSwiech2000} treats optimal control of the Duncan--Mortensen--Zakai equation in weighted Hilbert and Sobolev spaces and is especially relevant for the commutator estimates used later. Related randomized dynamic programming methods for McKean--Vlasov dynamics appear in~\cite{bayraktar2018randomized}; for background on stochastic filtering see~\cite{MR2454694}. Our contribution is different in two ways: we prove a Crandall--Ishii comparison theorem for semicontinuous viscosity solutions, and we allow the common-noise direction in the filtering equation to be generated by a state- and law-dependent coefficient. There is also a closely related line of work by Martini on Kolmogorov equations associated with nonlinear filtering. In~\cite{Martini2023}, Martini studies backward Kolmogorov equations on spaces of finite positive measures and probability measures associated with the Zakai and Kushner--Stratonovich equations, proving classical well-posedness without passing through densities. In~\cite{Martini2024}, he develops a viscosity approach for the Kolmogorov equation on the space of probability measures associated with the nonlinear filtering equation. Although those works do not study control problems, the PDEs they identify contain precisely the measure-valued filtering structures that motivate our change-of-variable method. To the best of our knowledge, no previous work gives a Crandall--Ishii comparison theory for this filtering class of PDEs, and the existing Wasserstein HJB comparison results do not cover these Zakai-type equations in the semicontinuous viscosity framework. The finite-positive-measure transformation discussed below is designed to address this gap. Compared with these works and with the second-order Wasserstein comparison results in~\cite{BEHZ,BCEQTZ25}, the present framework accommodates a common-noise coefficient of the form $\sigma_0(x,\mu)$ and thereby covers a substantially broader family of measure-valued second-order equations.

The rest of the paper is organized as follows. Subsections~\ref{subsec:notation} and~\ref{subsec:measure-derivatives} collect the notation and measure-derivative conventions used throughout the paper. Section~\ref{sec:change} introduces the common-noise Hessian, constructs the nonlinear transform, and formulates the augmented viscosity problem. Section~\ref{sec:comparison} proves the Ishii lemma and the comparison theorem. Section~\ref{sec:applications} applies the result to controlled stochastic filtering and to a related mean field control problem. Section~\ref{sec:lamperti} explains the Lamperti structure of the transform and its probabilistic meaning. Section~\ref{sec:finite-measure} describes the corresponding finite-measure equations arising from nonlinear filtering, including the Zakai-type equations studied by Martini. Sections~\ref{sec:commutator} and~\ref{sec:flow-estimates} record the Sobolev commutator and flow estimates used in the comparison argument.

    \subsection{Notation}\label{subsec:notation}
    Let $T$ be the finite horizon with $0 < T < +\infty$.
    For any measurable space $(\Om,\Fc)$,
    let $\Pc(\Om)$ denote the collection of all probability measures on $(\Om,\Fc)$.
    
    For any Polish space $(E,d)$,
    let $C([0,T],E)$ denote the space of all $E$-valued continuous functions on $[0,T]$ equipped with the uniform norm $\|\cdot\|$,
    $C_b(E)$ denote the collection of all $\R$-valued bounded continuous functions on $E$,
    $\Pc_2(E)$ denote the space of all probability measures $\rho$ on $E$ such that
    \begin{equation*}
        \int_E d(x,x_0)^2 \rho(dx) < +\infty,
    \end{equation*}
    For any $\rho \in \Pc(E)$ and $\phi \in C_b(E)$, we define $\langle \phi,\rho \rangle := \int_E\phi(x)\rho(dx)$.
    For $E = \R$, we denote by $m(\mu): = \int_{\R}x\mu(dx)$ the mean of the measure,
    
    The Wasserstein $2$-distance $\Wc_2$ between two probability measures $\rho_1$ and $\rho_2$ with $\rho_1, \rho_2 \in \Pc_2(E)$ is defined as
    \begin{equation*}
        \Wc_2(\rho_1,\rho_2)
        ~ := ~
        \bigg(
            \inf_{\rho \in \Gamma(\rho_1,\rho_2)}
            \int_{E \x E}d(e_1,e_2)^2 \rho(de_1,de_2)
        \bigg)^{1/2},
    \end{equation*}
    where $\Gamma(\rho_1,\rho_2)$ is the collection of all probability measures on $E\times E$, such that $\rho(de,E) = \rho_1(de)$ and $\rho(E, de) = \rho_2(de)$.
    For any $\mu \in \Pc_2(\R)$, $\mub := \int_{\R}x \mu(dx)$.
    The space $\Pc_2(E)$ is then naturally equipped with the metric $\Wc_2$.

    The Fourier Wasserstein $2$-distance $\rho_F$ between two probability measures $\rho_1$ and $\rho_2$ with $\rho_1, \rho_2 \in \Pc_2(\R)$ is defined as
    \begin{equation*}
        \rho_F^2(\rho_1,\rho_2)
        ~ := ~
        \int_{\R}\frac{|F_k(\rho_1-\rho_2)|^2}{(1 + |k|^2)^\lambda}dk,
    \end{equation*}
    where for $k \in \R$, $f_k(x) := (2\pi)^{-\frac{d}{2}}e^{i k \cdot x}$, and $F_k(\rho) := \langle f_k, \rho \rangle$. 
    For $\theta_1=(t_1,\mu_1,m_1), \, \theta_2=(t_2,\mu_2,m_2) \in \Theta:=[0,T] \times \Pc_2(\R) \x \R$, define 
    $$
        d_F^2(\theta_1,\theta_2)=|t_1-t_2|^2+|m_1-m_2|^2+\rho_F^2(\mu_1,\mu_2).
    $$
    

    
    \subsection{Measure derivatives}\label{subsec:measure-derivatives}
    We denote by $B_q$ the set of Borel measurable functions on $\R$ with at most quadratic growth.
    \begin{definition}
        \begin{enumerate}[(i)]
            \item The function $u: \Pc_2(\R) \longrightarrow \R$ is said to have a linear functional derivative if there exists
                \begin{equation*}
                    \delta_{\mu}u: \Pc_2(\R) \x \R \longrightarrow \R
                \end{equation*}
            such that $\delta_\mu u$ is continuous for the product topology and
                \begin{itemize}
                    \item for each $\mu \in \Pc_2(\R)$, the mapping $x \longmapsto \delta_{\mu} u(\mu,x) \in B_q$, 
                    \item for all $m_1, m_2 \in \Pc_2(\R)$,
                        \begin{equation}
                            u(m_1) - u(m_2) 
                            ~ = ~
                            \int_0^1 \int_{\R}
                                \delta_{\mu} u(\lambda m_1 + (1 - \lambda)m_2,x)(m_1 - m_2)(dx) d\lambda.
                        \end{equation}
                \end{itemize}
            \item We say a function $u:\Theta \longmapsto \R$ is partial $C^2$-regular
            if
            the function $(t,\mu, m) \in \Theta \longmapsto u(t,\mu, m)$ is $C^{1,2}$ in $(t,m)$ and continuous in all its variables, and $(\pa_x\delta_\mu u, \pa^2_{xx}\delta_\mu u) \in B_q \x B_q$ exist and are continuous in all their variables.
        \end{enumerate}
    \end{definition}

    For any partial $C^2$-regular function $(t,\mu) \longmapsto u(t,\mu)$, we define
    \begin{align*}
        D_\mu u :=& \pa_x\delta_\mu u,
        ~
        D_{x\mu} u := \pa^2_{xx}\delta_\mu u,
        \\
        \Hc u :=& \pa^2_{zz}u(t,y,(I_d + z)_\sharp \mu)|_{z = 0}.
    \end{align*}
    Moreover, if in addition, $D_{\mu\mu}u \in B_q$ exists and continuous in all its variables, then
    \begin{align*}
        \Hc u  =
        \displaystyle
            \int_{\R}\int_{\R}D^2_{\mu\mu}u(\mu,x,y)\mu(dx)\mu(dy) 
                + \int_{\R} D_{x\mu} u(\mu,x)\mu(dx).
    \end{align*}

\section{The common-noise Hessian and the nonlinear transform}\label{sec:change}
\subsection{The original Wasserstein PDE}\label{subsec:original-pde}
The starting point is the following PDE:
\begin{align}\label{eq:origPDE}
    -(\pa_t u + G(\cdot,D_\mu u ,D_{x\mu}u,\Hc_{\sigma_0} u))(t,\mu) = 0,
    \qquad (t,\mu) \in [0,T) \x \Pc_2(\R),
\end{align}
where $G:[0,T) \x \Pc_2(\R) \x B_q \x B_q \x \R \longrightarrow \R$. We also define the weighted Hessian operator as
$$
    \Hc_{\sigma_0} u(t,\mu) = \int_\R\int_\R D_{\mu\mu}u(t,\mu)(x,y)\sigma_0(x,\mu)\sigma_0(y,\mu)\mu(dx)\mu(dy) + 
    \int_\R D_{x\mu}u(t,\mu)(x)\sigma^2_0(x,\mu)\mu(dx).
$$
\subsection{The measure-dependent flow and augmented equation}\label{subsec:augmented-equation}
We first introduce a coefficient that determines the change of variables we will apply.
\begin{assumption}\label{assump:regularity}
The coefficient $\sigma_0: \R \x \Pc_2(\R) \longrightarrow \R$ satisfies the following conditions for some integer $k$:
\begin{enumerate}[(i)]
\item The functions $\sigma_0$ and $D_\mu\sigma_0$ are bounded, Borel measurable, and Lipschitz: there exists a constant $L > 0$ such that for all $(x_1, x_2, \mu,\nu) \in \R \times \R \times \Pc_2(\R) \times \Pc_2(\R)$ and $(y_1, y_2) \in \R \x \R$,
\begin{align*}
    \abs{\sigma_0(x_1, \mu) - \sigma_0(x_2, \nu)}& \leq L \abs{x_1 - x_2} +L|\mu - \nu|_{-\lambda},\\
    \abs{D_\mu\sigma_0(x_1, \mu)(y_1) - D_\mu\sigma_0(x_2, \nu)(y_1)} & \leq L \abs{x_1 - x_2} + \abs{y_1 - y_2} +L|\mu - \nu|_{-\lambda},\\
    \abs{D_{y\mu}\sigma_0(x_1, \mu)(y_1) - D_{y\mu}\sigma_0(x_2, \nu)(y_1)} & \leq L \abs{x_1 - x_2} + \abs{y_1 - y_2} +L|\mu - \nu|_{-\lambda}.
\end{align*}

\item $\sigma_0(\cdot, \mu) \in W^{k+2,\infty}(\R)$ uniformly in $\mu$ and the Lions derivative $D_\mu \sigma_0(\cdot, \mu)(y)$ exists and satisfies $D_\mu \sigma_0(\cdot, \mu)(y) \in W^{k+1,\infty}(\R)$ uniformly in $(y,\mu)$:
\[
\sup_{\mu \in \Pc_2(\R), y \in \R}\bigg(\sum_{i = 0}^{k+2}\norm{\partial_x^i \sigma_0(\cdot, \mu)}_{L^\infty} 
+ \sum_{i = 0}^{k}\norm{\partial_x^iD_\mu\sigma_0(\cdot, \mu)(y)}_{L^\infty} + \norm{\partial_x^iD_{y\mu}\sigma_0(\cdot, \mu)(y)}_{L^\infty}\bigg) \leq C_k,
\]
where $\partial_x^k$ denotes the partial derivative of order $k$ in the first (spatial) argument.
\end{enumerate}
\end{assumption}

In this subsection we derive the PDE satisfied by the extended function $\ub$ defined on the augmented space $[0,T] \x \Pc_2(\R) \x \R$. 
We introduce the forward flow $\psi$ solving
\begin{align}\label{eq:forward-flow}
    \pa_m\psi(x,\mu,m) = \pa_x\psi(x,\mu,m)\sigma_0(x,\mu) + \int_\R D_\mu \psi(x,\mu,m)(y)\sigma_0(y,\mu)\mu(dy),
    \quad \psi(x,\mu,0) = x,
\end{align}
and define the extended function
    $\ub(t,\mu,m) := u(t,\psi(\cdot,\mu,m)_\sharp\mu).$
We aim to express $D_\mu u$, $D_{x\mu}u$, and $\Hc_{\sigma_0}u$ in terms of the derivatives of $\ub$ with respect to $(\mu,m)$.

To reverse the direction, we define the map $m \longmapsto \mu_m := \psi(\cdot,\mu,m)_\sharp\mu$, then observe that setting $\mut_m := \phi(\cdot,\mu,m)_\sharp\mu$ where the backward flow $\phi$ solves
\begin{align}\label{eq:backward-flow}
    \pa_m\phi(x,\mu,m) = -\pa_x\phi(x,\mu,m)\sigma_0(x,\mu) - \int_\R D_\mu \phi(x,\mu,m)(y)\sigma_0(y,\mu)\mu(dy),
    \quad \phi(x,\mu,0) = x,
\end{align}
one has the inversion relations
\begin{align*}
    \psi(\cdot,\mut_m,m)_\sharp\mut_m = \mu,
    \qquad
    \phi(\cdot,\mu_m,m)_\sharp\mu_m = \mu,
    \qquad
    \phi(\cdot,\mu,m) = \psi(\cdot,\mu,-m),
    \qquad
    \mut_m = \mu_{-m}.
\end{align*}
Using the inversion $\mu = \phi(\cdot,\mu_m,m)_\sharp\mu_m$, we can express $u(\mu_m) = \ub(\mu,m)$. More broadly, $u(\mu) = \ub(\mut_m,m)$ with $\mut_m = \phi(\cdot,\mu,m)_\sharp\mu$. Differentiating this identity yields the relations between the Lions derivatives of $u$ and the partial derivatives of $\ub$ (we suppress the time variable in the following calculation for simplicity):
\begin{align*}
    \delta_\mu u(\mu,x) &=
    \delta_\mu \ub(\mut_m,m,\phi(x,\mu,m))
    + \int_\R D_\mu \ub(\mut_m,m,z)\,\delta_\mu \phi(\psi(z,\mu,m), \mu,m)(x)\,\mut_m(dz),
   \\
    D_\mu u(\mu,x) &=
    D_\mu \ub(\mut_m,m,\phi(x,\mu,m))\,\pa_x\phi(x,\mu,m)
    \\&\quad+ \int_\R D_\mu \ub(\mut_m,m,z)\,D_\mu \phi(\psi(z,\mu,m), \mu,m)(x)\,\mut_m(dz),
   \\
    D_{x\mu} u(\mu,x) &=
    D_{x\mu} \ub(\mut_m,m,\phi(x,\mu,m))\,\big(\pa_x\phi(x,\mu,m)\big)^2
    + D_\mu \ub(\mut_m,m,\phi(x,\mu,m))\,\pa^2_x\phi(x,\mu,m) \notag\\
    &\quad + \int_\R D_\mu \ub(\mut_m,m,z)\,D_{x\mu} \phi(\psi(z,\mu,m), \mu,m)(x)\,\mut_m(dz).
\end{align*}
We obtain the expressions needed for the change of variables in the PDE:
\begin{align*}
    D_\mu u(\mu_m,x) &=
    D_\mu \ub(\mu,m,\phi(x,\mu_m,m))\,\pa_x\phi(x,\mu_m,m)
    \\&\quad+ \int_\R D_\mu \ub(\mu,m,z)\,D_\mu \phi(\psi(z,\mu_m,m), \mu_m,m)(x)\,\mu(dz),
    \\
    D_{x\mu} u(\mu_m,x) &=
    D_{x\mu} \ub(\mu,m,\phi(x,\mu_m,m))\,\big(\pa_x\phi(x,\mu_m,m)\big)^2
    \\&\quad+ D_\mu \ub(\mu,m,\phi(x,\mu_m,m))\,\pa^2_x\phi(x,\mu_m,m) \notag
    \\&
    \quad + \int_\R D_\mu \ub(\mu,m,z)\,D_{x\mu} \phi(\psi(z,\mu_m,m), \mu_m,m)(x)\,\mu(dz).
\end{align*}
For the second-order term in $m$, the construction of the flow is designed precisely so that
\begin{align*}
    \Hc_{\sigma_0} u(\mu_m) + \int_\R D_\mu u(\mu_m)(x)\Gamma(x,\mu_m)\mu_m(dx) = \pa^2_{mm}\,\ub(\mu,m),
\end{align*}
where the function $\Gamma: \R \x \Pc_2(\R) \longrightarrow \R$ is given by
$$
    \Gamma(x,\mu) := \partial_x \sigma_0(x,\mu)\,\sigma_0(x,\mu)
+
\int_{\mathbb{R}}
D_\mu \sigma_0(x,\mu)(y)\,\sigma_0(y,\mu)\,\mu(dy).
$$
Substituting these identities into \eqref{eq:origPDE}, we deduce that $\ub$ satisfies the augmented PDE
\begin{align}\label{eq:NewHJB}
    -\big(\pa_t \ub + G^e(\cdot,D_\mu\ub,D_{x\mu}\ub,\pa^2_{mm}\ub)\big)(t,\mu,m) = 0,
    \qquad (t,\mu, m) \in [0,T) \x \Pc_2(\R) \x \R,
\end{align}
where the extended Hamiltonian $G^e$ is defined for $(t,\mu,m, p,q, X) \in [0,T] \x \Pc_2(\R) \x \R \x B_{q} \x B_{q} \x \R$ by
\begin{align*}
    G^e(t,\mu, m,p,q,X)
    :=
    G\bigg(t,\mu_m,\; P(\cdot,\mu,m),\; Q(\cdot,\mu,m),\; X - \int_\R P(x,\mu,m)\Gamma(x,\mu_m)\mu_m(dx)\bigg),
\end{align*}
with $\mu_m = \psi(\cdot,\mu,m)_\sharp\mu$ and
\begin{align*}
    P(x,\mu,m) &:= p(\phi(x,\mu_m,m))\,\pa_x\phi(x,\mu_m,m)
        + \int_\R p(z)\,D_\mu \phi(\psi(z,\mu_m,m), \mu_m,m)(x)\,\mu(dz),
    \\
    Q(x,\mu,m) &:= q(\phi(x,\mu_m,m))\,\big(\pa_x\phi(x,\mu_m,m)\big)^2
        + p(\phi(x,\mu_m,m))\,\pa^2_x\phi(x,\mu_m,m) \notag\\
    &\qquad + \int_\R p(z)\,D_{x\mu} \phi(\psi(z,\mu_m,m), \mu_m,m)(x)\,\mu(dz).
\end{align*}

\subsection{Viscosity solutions for the augmented equation}\label{subsec:augmented-viscosity}
We now introduce the notion of viscosity solution for the augmented PDE. The following definitions are natural extensions of the standard viscosity solution concept to the present infinite-dimensional setting.

\begin{definition}\label{def:jets}
    Let $\ub:[0,T] \x \Pc_2(\R) \x \R \longrightarrow \R$ be a locally bounded function. For $\theta = (t,\mu,m) \in [0,T) \x \Pc_2(\R) \x \R$, the (partial) second-order superjet $J^{2,+}\ub(\theta) \subset \R \x B_{q} \x B_{q} \x \R$ is defined by
    \begin{align*}
        J^{2,+}\ub(\theta)
        :=
        \big\{(\pa_t\varphi,  D_\mu \varphi, D_{x\mu}\varphi, \pa^2_{mm}\varphi)(\theta): \;
        & \ub - \varphi \text{ has a local maximum at } \theta,\\
        & \varphi \text{ is partial } C^2\text{-regular}\big\},
    \end{align*}
    the second-order subjet is $J^{2,-}\ub(\theta) := -J^{2,+}(-\ub)(\theta)$, and the closures $\Jb^{2,+}\ub(\theta)$, $\Jb^{2,-}\ub(\theta)$ are defined analogously by taking limits of jets at approximating points $\theta_n \to \theta$ with $\ub(\theta_n) \to \ub(\theta)$.
\end{definition}

\begin{definition}\label{def:viscosity}
    A function $u: [0,T] \x \Pc_2(\R) \to \R$ is a viscosity subsolution (resp.\ supersolution) of \eqref{eq:origPDE} if the extended function
    \begin{align*}
        \bar u(t,\mu,m) := u(t, \psi(\cdot,\mu,m)_{\sharp}\mu)
    \end{align*}
    is locally bounded and for every $(t,\mu,m) \in [0,T) \x \Pc_2(\R) \x \R$,
    \begin{equation*}
        -b - G^e(t,\mu,m, p, q, X) \le 0 \quad (\text{resp. } \ge 0),
    \end{equation*}
    for all $(b, p, q, X) \in J^{2,+}\,\bar u(t,\mu, m)$ (resp.\ $J^{2,-}\,\bar u(t,\mu, m)$).
\end{definition}

\section{Ishii's lemma and comparison}\label{sec:comparison}

    We equip the space $[0,T] \x \Pc_2(\R) \x \R $ with the product topology, where $\Pc_2(\R)$ is equipped with the $W_1$-topology. Define an auxiliary function 
    \begin{align}\label{eq:vartheta}
        \vartheta: [0,T] \x \Pc_2(\R) \x \R \ni \theta = (t,\mu,m) \longmapsto  e^{-Lt}\bigg(e^{C_*\sqrt{1+m^2}}~+\int_{\R} |x|^2 \, \mu(dx)\bigg),
    \end{align}    
    for some fixed strictly positive constant $C_*$ and $L$. The following lemma has been established in \cite{BEHZ}.

  \begin{lemma}[Ishii's Lemma]\label{lemm:ishii}
    Suppose that $u, -v: [0,T] \x  \Pc_2({\R}) \rightarrow \R$ are bounded upper-semicontinuous functions.   
    For any $\delta>0$, introduce 
    \begin{align*}
        &\tilde u: [0,T] \times \Pc_2(\R) \times \R  \ni (t, \mu, m) \mapsto u(t, \psi(x,\mu,m)_\sharp\mu)- \delta \vartheta(t, \mu, m), \\
        &\tilde v: [0,T] \times \Pc_2(\R) \times \R  \ni (s, \nu, n) \mapsto v(s, \psi(x,\nu,n)_\sharp\nu)+ \delta \vartheta(s, \nu, n).
    \end{align*}
    Then there exists  a local maximum $ (\theta^*,\iota^*)=((t^*,\mu^*,m^*),(s^*,\nu^*,n^*))$ of 
            \begin{align}\label{eq:ishiimax}
             (\theta,\iota) \longmapsto \tilde u(\theta)- \tilde v(\iota) -\frac{1}{2\e} \left(|t-s|^2+|m-n|^2+\rho_F^2(\mu,\nu) \right) .
             \end{align}
            Assume $ \theta^*, \iota^*$ are in the interior $[0,T) \times \Pc_2(\R) \x \R$, and denote
        \begin{align}
            \Lc(\eta,\mu,\nu):=&2\int \frac{Re(F_k(\eta)(F_k(\mu)-F_k(\nu))^*)}{(1+|k|^2)^\lambda}dk, \label{eq:mathcalL}\\
\Phi(\mu):=&2\rho^2_F(\mu,\mu^*)+\Lc(\mu,\mu^*,\nu^*)\label{eq:defpsi},\\
            \Psi(\nu):=& 2\rho^2_F(\nu,\nu^*)-\Lc(\nu,\mu^*,\nu^*),\notag
        \end{align}
        where $F_k^*(\mu), F_k^*(\nu)$ are the conjugates of $F_k(\mu), F_k(\nu)$, respectively.
     Then for any $\a>0$, there exist $X^*,Y^*
    \in \R$ such that 
    \begin{align*}
     & \left(\frac{1}{\e}(t^*-s^*), 
        \frac{D_\mu \Phi(\mu^*)(\cdot)}{2\e},
        \frac{D_{x\mu} \Phi(\mu^*)(\cdot)}{2\e},X^* \right) 
        \in \bar J^{2,+} \tilde u(\theta^*), \\
    & \left(\frac{1}{\e}(t^*-s^*), 
        -\frac{D_\mu \Psi(\nu^*)(\cdot)}{2\e},
        -\frac{D_{x\mu} \Psi(\nu^*)(\cdot)}{2\e},-Y^* \right) 
        \in \bar J^{2,-} \tilde v (\iota^*),
    \end{align*}
        as well as 
    \begin{align*}
        -\left(\frac{1}{\alpha}+\frac{2}{\e}\right)I_{2d}\leq 
        &\begin{pmatrix}
            X^*&0\\0& Y^*
        \end{pmatrix}
        \leq 
        \left(\frac{1}{\e}+\frac{2\alpha}{\e^2} \right)
        \begin{pmatrix}
            I_{d}&-I_{d}\\-I_{d}&I_{d}
        \end{pmatrix}.
    \end{align*}
      
    \end{lemma}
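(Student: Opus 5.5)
The plan is to reduce the statement to the Ishii lemma on $[0,T]\times\Pc_2(\R)\times\R$ established in \cite{BEHZ}, applied to the extended functions rather than to $u,v$ directly.

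\textbf{Step 1: properties of the extended functions.} Set $\bar u(t,\mu,m):=u(t,\psi(\cdot,\mu,m)_\sharp\mu)$ and define $\bar v$ analogously. Since $u$ and $-v$ are bounded, so are $\bar u$ and $-\bar v$. For upper semicontinuity, I would use the stability of the forward flow. Under Assumption~\ref{assump:regularity}, the map $(\mu,m)\mapsto\psi(\cdot,\mu,m)_\sharp\mu$ is continuous, and in fact locally Lipschitz in $W_1$ and in the negative Sobolev norm $|\cdot|_{-\lambda}$, by the flow estimates of Section~\ref{sec:flow-estimates}. Composing an upper semicontinuous function with a continuous map preserves upper semicontinuity, so $\bar u$ and $-\bar v$ are bounded and upper semicontinuous on $\Theta$ with the product topology.

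\textbf{Step 2: existence of a maximum.} The penalization $\delta\vartheta$ is coercive: $e^{C_*\sqrt{1+m^2}}$ grows in $m$, and the second moment term confines $\mu$ to sets that are relatively compact in $W_1$. Hence the supremum in \eqref{eq:ishiimax} can be restricted to a compact set: $t,s\in[0,T]$, $|m|,|n|\le R$, and $\mu,\nu$ in a second-moment ball, which is $W_1$-compact. Because $\rho_F$ is continuous for weak convergence, upper semicontinuity then gives a global maximizer $(\theta^*,\iota^*)$, and in particular a local one.

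\textbf{Step 3: jets and matrix inequality.} At an interior maximizer, I would apply the Ishii lemma of \cite{BEHZ} to $\tilde u=\bar u-\delta\vartheta$ and $\tilde v=\bar v+\delta\vartheta$, viewed as semicontinuous functions on the augmented space. The $(t,m)$ variables are finite-dimensional and are penalized by the quadratic $\frac{1}{2\e}(|t-s|^2+|m-n|^2)$. The measure variable is penalized by $\rho_F^2$. The measure parts of the jets come from expanding $\rho_F^2(\mu,\nu)$ around $(\mu^*,\nu^*)$:
\[
\rho_F^2(\mu,\nu)\le \Phi(\mu)+\Psi(\nu)+\rho_F^2(\mu^*,\nu^*),
\]
which uses $|a-b|^2\le 2|a-a^*|^2+2|b-b^*|^2+2\,\mathrm{Re}\big((a-b)\cdots\big)$ up to constants. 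This decouples $\mu$ from $\nu$, so that $\Phi/(2\e)$ and $\Psi/(2\e)$ become partial $C^2$-regular test functions.

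Next, freeze $\mu=\mu^*$ and $\nu=\nu^*$. In the remaining finite-dimensional variables $(t,m,s,n)$, apply the Crandall--Ishii lemma \cite{usersguide} with parameter $\alpha$. This yields $X^*,Y^*$ satisfying the stated two-sided matrix bound, where $A=\frac1\e\begin{pmatrix}I&-I\\-I&I\end{pmatrix}$ and the upper bound is $A+\alpha A^2$. The time component comes out as $(t^*-s^*)/\e$.

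\textbf{Main obstacle.} The hard part is combining the infinite-dimensional measure direction with the finite-dimensional Crandall--Ishii step while keeping the jets partial $C^2$-regular. The standard route in \cite{BEHZ} uses sup-convolution in the $(t,m)$ variables only, together with Jensen's lemma applied after a perturbation by a smooth strict-maximum penalization in $\mu$, for instance a Fourier-type distance. The jets obtained at approximating points then pass to the closures $\bar J^{2,\pm}$.

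What must be checked is that \cite{BEHZ} uses only boundedness, semicontinuity and the structure of the penalization, and not any specific feature of the original function beyond these. If so, the result applies verbatim to $\bar u$ and $\bar v$ once Step 1 is in place. I expect the continuity of the flow pushforward in the $W_1$ topology to be the only genuinely new input.
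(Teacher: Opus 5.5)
Your proposal follows the paper's route: the paper proves this lemma simply by invoking the Ishii lemma of \cite{BEHZ} for the augmented functions $\bar u-\delta\vartheta$ and $\bar v+\delta\vartheta$. Your Steps 1--2 make explicit the hypotheses that citation tacitly needs: boundedness and upper semicontinuity via the $W_1$-continuity of $(\mu,m)\mapsto\psi(\cdot,\mu,m)_\sharp\mu$, and existence of a maximizer from the coercivity of $\vartheta$.

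One small correction concerns the constant in your decoupling bound. The bound that touches at the maximizer is $\rho_F^2(\mu,\nu)\le\Phi(\mu)+\Psi(\nu)-\rho_F^2(\mu^*,\nu^*)$, with equality at $(\mu^*,\nu^*)$; your version with $+\rho_F^2(\mu^*,\nu^*)$ does not touch there. This equality, together with the strict-maximum term $2\rho_F^2(\cdot,\mu^*)$ in $\Phi$ rather than a literal freezing of $\mu=\mu^*$, is what lets the argument of \cite{BEHZ} produce jets of $\tilde u$ on the full space.
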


    We impose the following assumption on $G$ for the comparison principle.
    \begin{assumption}\label{ass:comparison}
     \begin{itemize}
       \item[(i)] Assume that the extended Hamiltonian $G^e$ satisfies the following Lipschitz condition: for any $\theta = (t,\mu,m) \in [0,T] \x \Pc(\R) \x \R$, $p_1,p_2 \in B_{q}$, and $q_1,q_2 \in B_{q}$,
        \begin{align*}
            |G^e(\theta,p_1,q_1,X_1)
            - G^e(\theta,p_2,q_2,X_2)|
            \le 
            L_G
            &\bigg(1 +
            \int_{\R}|x|^2\mu(dx)\bigg)e^{C_*|m|}
            \\&
            \Big(|p_1 - p_2|_q + |q_1 - q_2|_q  + |X_1 - X_2| \Big),
        \end{align*}
        for some positive constant $L_G$.
      
        \item[(ii)]
        There exists a modulus of continuity $\omega_G$ such that for all
        $\e > 0$, 
        $(\theta = (t,\mu, m),\iota = (s,\nu,n)) \in \big([0,T) \x \Pc_2(\R) \x \R\big)^2$, we have the inequality
        \begin{align*}
            &~
            G^e\Big(\theta, 
                \nabla\kappa,
                \nabla^2\kappa,
                X
            \Big)
            -
            G^e\Big(\iota, 
                \nabla\kappa,
                \nabla^2\kappa,
                -Y
            \Big) 
            \\ \le & ~
            \omega_G\bigg(\frac{1}{\e}d^2_F(\theta,\iota) + d_F(\theta,\iota)\bigg)  
            (e^{C_*|m|} + e^{C_*|n|}) \bigg(1 + \int_{\R}|x|^2(\mu + \nu)(dx)\bigg),
        \end{align*}
        where the function $\kappa$ on $\R$ is defined as
        \begin{equation*}
            \kappa(x)
            ~ := ~
            \frac{1}{\e}\int_{\R}
            \frac{Re(F_k(\mu - \nu)f^*_k(x))}
            {(1 + |k|^2)^\lambda}
            dk,
           \quad
            x \in \R,
        \end{equation*}
        whenever $X \leq -Y$, $X,Y \in \R $.
        \end{itemize}
    \end{assumption}

    \begin{theorem}[Comparison Principle]\label{thm:comparison}
            Assume that $u, -v: [0,T] \times \Pc_2({\R})  \rightarrow \R$ are bounded upper-semicontinuous functions, and $u$ (resp. $v$) is a viscosity subsolution (resp. supersolution) of the equation
    \begin{align}
        -\pa_t u(t,\mu)
        =
        G(t,\mu, D_\mu u (t,\mu),D_{x\mu}u(t,\mu),\Hc_{\sigma_0}u(t,\mu)).
    \end{align}
        Then, under Assumption \ref{ass:comparison}, $u(T,\cdot) \le v(T,\cdot)$ implies that $u \le v$ for all $(t,\mu) \in [0,T] \x \Pc_2({\R})$.
    \end{theorem}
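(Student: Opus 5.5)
We argue by contradiction through the standard doubling-of-variables scheme on the augmented space $\Theta=[0,T]\times\Pc_2(\R)\times\R$. Suppose $\sup_{[0,T]\times\Pc_2(\R)}(u-v)>0$, attained approximately at some $(\hat t,\hat\mu)$. Since $\psi(\cdot,\mu,0)=\mathrm{id}$, the extended functions satisfy $\ub(t,\mu,0)=u(t,\mu)$ and $\vb(t,\mu,0)=v(t,\mu)$. Hence, for $\delta>0$ small, $\sup_\Theta(\tilde u-\tilde v)$ over the diagonal is bounded below by some $\gamma>0$ independent of $\e$, where $\tilde u,\tilde v$ are as in Lemma~\ref{lemm:ishii}. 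We may also replace $u$ by $u-\eta(T-t)$ (or equivalently use the factor $e^{-Lt}$ in $\vartheta$ with $L$ large) so that $u$ becomes a strict subsolution with a margin. Boundedness of $u,v$ and the coercivity of $\vartheta$ (exponential in $m$, second moment in $\mu$) localize the maximizer of \eqref{eq:ishiimax}: maximizing sequences have bounded second moments and bounded $m$. Since $\Pc_2$ sets with bounded second moment are $W_1$-relatively compact, and $\rho_F$ is continuous for weak convergence, upper semicontinuity yields existence of $(\theta^*,\iota^*)$, as already guaranteed by Lemma~\ref{lemm:ishii}. The usual argument then gives $\frac1\e d_F^2(\theta^*,\iota^*)\to0$ as $\e\to0$, using that $\rho_F$ metrizes weak convergence on moment-bounded sets. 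The terminal condition $u(T,\cdot)\le v(T,\cdot)$ transfers to $\ub(T,\cdot,m)\le\vb(T,\cdot,m)$, since both are evaluated at the same pushforward $\psi(\cdot,\mu,m)_\sharp\mu$. Together with upper semicontinuity, this excludes $t^*=T$ or $s^*=T$ for small $\e$, so $\theta^*,\iota^*$ are interior.

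Next we apply Lemma~\ref{lemm:ishii} to obtain jets for $\tilde u,\tilde v$ and convert them into jets for $\ub,\vb$ by adding back $\pm\delta\vartheta$, which is partial $C^2$-regular. This adds $\delta(\pa_t\vartheta,\,2x,\,2,\,\pa_{mm}\vartheta)$ to the jet of $\ub$ (and the opposite-sign terms to $\vb$). A direct computation shows that $D_\mu\Phi(\mu^*)/(2\e)$ and $D_\mu\Psi(\nu^*)/(2\e)$ both equal $\nabla\kappa$ at the maximizer, because the $\rho_F^2(\cdot,\mu^*)$ parts have vanishing derivative there. Likewise the $x\mu$-derivatives equal $\nabla^2\kappa$. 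The matrix inequality of Lemma~\ref{lemm:ishii}, tested against $(1,1)$, gives $X^*+Y^*\le0$, i.e.\ $X^*\le -Y^*$. The closure jets are admissible in Definition~\ref{def:viscosity} by lower semicontinuity of $G^e$ in its arguments. This follows from Assumption~\ref{ass:comparison}(i), which makes $G^e$ continuous in $(p,q,X)$; continuity in $\theta$ along the approximating sequences is handled via (ii).

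Subtracting the two viscosity inequalities, the time derivatives $\frac1\e(t^*-s^*)$ cancel. The remaining terms are the strict-subsolution margin, $\delta(\pa_t\vartheta(\theta^*)+\pa_t\vartheta(\iota^*))$, and the difference of Hamiltonians. We split the Hamiltonian difference into two parts. The first is $G^e(\theta^*,\nabla\kappa+2\delta x,\nabla^2\kappa+2\delta,X^*+\delta\pa_{mm}\vartheta)-G^e(\theta^*,\nabla\kappa,\nabla^2\kappa,X^*)$, together with its analogue at $\iota^*$. By Assumption~\ref{ass:comparison}(i) each of these is bounded by $\delta L_G(1+\int|x|^2d\mu)e^{C_*|m|}$ times $\|2x\|_q+2+|\pa_{mm}\vartheta|$. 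The second part is the cross term $G^e(\theta^*,\nabla\kappa,\nabla^2\kappa,X^*)-G^e(\iota^*,\nabla\kappa,\nabla^2\kappa,-Y^*)$, which Assumption~\ref{ass:comparison}(ii) bounds by $\omega_G\big(\frac1\e d_F^2+d_F\big)$ times the same weights.

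The main obstacle is absorbing the $\delta$-terms. They are of size $\delta L_G(1+M_2)e^{C_*|m|}$, while $-\pa_t\vartheta=L\vartheta\gtrsim L\,e^{-LT}\big(e^{C_*\sqrt{1+m^2}}+M_2\big)$, where $M_2$ denotes the second moment. The weights in (i) contain the product $M_2e^{C_*|m|}$, whereas $\vartheta$ controls only the sum $e^{C_*\sqrt{1+m^2}}+M_2$. The first thing to try is to use boundedness of $u,v$ to bound $\delta\vartheta(\theta^*)\le C$, which gives $\delta M_2$ and $\delta e^{C_*|m^*|}$ each bounded uniformly in $\e$. Then we choose $L$ large (depending on $C_*,L_G$), noting $\pa_{mm}\vartheta\lesssim C_*^2 e^{C_*\sqrt{1+m^2}}$. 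If the product structure defeats this, we would instead multiply $\vartheta$ by a product weight $(1+M_2)e^{C_*\sqrt{1+m^2}}$, which is still partial $C^2$-regular with explicitly computable jets. Finally, the weights multiplying $\omega_G$ are bounded uniformly in $\e$ for fixed $\delta$, so that contribution vanishes as $\e\to0$. We then let $\e\to0$ and reach a contradiction with the strict margin. This yields $\ub\le\vb$, and evaluating at $m=0$ gives $u\le v$ after sending $\delta,\eta\to0$.
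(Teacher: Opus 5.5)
Your outline follows the paper's proof essentially step for step: strict subsolution via $h(T-t+1)$, doubling on $\Theta$ with $\frac{1}{2\e}d_F^2+\delta(\vartheta(\theta)+\vartheta(\iota))$, Lemma~\ref{lemm:ishii}, Assumption~\ref{ass:comparison}(i) for the $\delta$-increments and (ii) for the cross term, then $\e\to0$ and $\delta\to0$. Everything up to the Hamiltonian split is fine, and you usefully make explicit that the matrix inequality gives $X^*\le -Y^*$, which (ii) requires. The trouble is exactly the step you flag yourself: it is a genuine gap, and neither of your remedies closes it.

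\textbf{Why the absorption fails.} With $q(m)=e^{C_*\sqrt{1+m^2}}$ one has $C_*^2\frac{m^2}{1+m^2}\,q\le q''\le (C_*+C_*^2)\,q$. So the $m$-component $\delta e^{-Lt}q''(m)$ of the jet of $\delta\vartheta$ is comparable to $\delta e^{-Lt}q(m)$, and (i) multiplies it by the unbounded weight $L_G(1+M_2)e^{C_*|m|}$. The bound (i) yields is therefore of order $\delta e^{-Lt}(1+M_2)e^{C_*|m|}\bigl(1+q(m)\bigr)$. The only absorbing term, $L\delta\bigl(\vartheta(\theta^*)+\vartheta(\iota^*)\bigr)$, is linear in $q(m)+M_2$. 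Already for bounded $M_2$ the ratio is of order $e^{C_*|m|}/L$, unbounded for every fixed $L$.

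\textbf{Why your two remedies fail.} Your first remedy only controls quantities linear in $\vartheta$. The bound $\delta\vartheta\le 2M$ gives $1+M_2,\ e^{C_*|m^*|},\ q(m^*)=O(e^{LT}/\delta)$, and the sharper $\delta\vartheta(\theta^\delta)\to0$ only improves this to $o(1/\delta)$. The error is $\delta$ times a product of three such quantities, so you only get $O(\delta^{-2})$, not $o(1)$. The product weight $e^{-Lt}(1+M_2)q(m)$ is worse. Its $D_\mu$-component $2\delta e^{-Lt}q(m)\,x$ alone is controlled by (i) only up to $L_G\delta e^{-Lt}(1+M_2)e^{C_*|m|}q(m)$, a factor of order $e^{C_*|m|}/L$ relative to the absorbing term. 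Its $\pa^2_{mm}$-component adds a further factor $1+M_2$.

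\textbf{The paper does not close this either.} Its proof disposes of this step by ``choosing $L$ sufficiently large'', but the estimate does three things that are not justified:
\begin{enumerate}
\item it replaces the product weight of (i) by a sum;
\item it bounds $\nabla^2q(m)$ by the $m$-independent constant $(C+C^2)e^{C}$;
\item in the final display it writes the absorbing term as the product $e^{C_*\sqrt{1+m^2}}\int|x|^2\mu(dx)$ rather than the sum in \eqref{eq:vartheta}.
\end{enumerate}

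\textbf{What would close it.} You need a hypothesis matched to the penalization, e.g.
\[
\bigl|G^e(\theta,p+\delta D_\mu\vartheta,q+\delta D_{x\mu}\vartheta,X+\delta\pa^2_{mm}\vartheta)-G^e(\theta,p,q,X)\bigr|\le\delta\bigl(L_0\vartheta(\theta)+C_0\bigr),
\]
with $L_0,C_0$ independent of $\theta$, and the same with $-\delta$ at $\iota$. Then $L>L_0$ absorbs it and the residual $2C_0\delta$ vanishes as $\delta\to0$. The bound furnished by (i) is not of this form. In the filtering example such a bound is plausible. There the $X$-Lipschitz constant is $\frac12\tilde\sigma^2$, and the specific increments $p=2\delta e^{-Lt}x$, $q=2\delta e^{-Lt}$ can be pushed through $P,Q$ directly. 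This produces terms like $\delta e^{-Lt}e^{C|m|}(1+\int|z|\,\mu(dz))$ and $\delta e^{-Lt}e^{2C|m|}$, which Young's inequality absorbs once the exponent in $\vartheta$ is at least $2C$.

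\textbf{A secondary point.} Transferring the viscosity inequalities to the closure jets from Lemma~\ref{lemm:ishii} needs $G^e$ jointly continuous in $(\theta,p,q,X)$: upper semicontinuous on the subsolution side and lower semicontinuous on the supersolution side, not merely lower semicontinuous. Assumption (ii) cannot supply continuity in $\theta$. It is a one-sided comparison valid only for the special arguments $(\nabla\kappa,\nabla^2\kappa)$ built from the two points being compared. This continuity must therefore be assumed; the paper also uses it tacitly.
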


    \begin{proof}


\medskip
 \noindent  \emph{Step 1: Preliminaries.} 
 Recall that  $\ub(t,\mu,m) := u(t,\psi(x,\mu,m)_\sharp\mu)$, $\vb(t,\mu,m) := v(t,\psi(x,\mu,m)_\sharp\mu)$. We prove the result by contradiction. Assume that
        \begin{equation*}
            \sup_{(t,\mu,m) \in [0,T] \x \Pc_2({\R}) \x \R}(\ub - \vb)(t,\mu,m) \geq 4r,
        \end{equation*}
        where $r$ is a strictly positive constant.
        Then we choose a $h > 0$ small enough depending on $r$ such that
        $\ub_h:= \ub - h(T - t + 1)$ satisfies
        \begin{equation*}
            \sup_{(t,\mu,m) \in [0,T] \x \Pc_2({\R}) \x \R}(\ub_h - \vb)(t,\mu,m) \ge 3r > 0.
        \end{equation*}
        We then verify that $\ub_h$ is still a viscosity subsolution of the equation \eqref{eq:NewHJB}. Suppose $\phi$ is a $C^2$-regular test function such that $\ub_h - \phi$ has a local maximum at $(t^*,\mu^*,m^*)$. Equivalently, $\ub - (\phi + h(T - t + 1))$ has a local maximum at $(t^*,\mu^*,m^*)$. 
        We write $\phi + h(T - t + 1)$ as $\phi_h$ for short.
        Then, the viscosity property of $u$ gives that, for any $(t^*,\mu^*,m^*)$,
        \begin{align*}
            &
            -\left(\pa_t \phi_h
                +
                G^e(\cdot, D_\mu \phi_h,D_{x\mu} \phi_h, \pa^2_{mm} \phi_h)\right)
            (t^*, \mu^*, m^*)
            \\  ~ = &~
            -\left(\pa_t \phi_h
                +
                G^e(\cdot, D_\mu \phi,D_{x\mu} \phi, \pa^2_{mm} \phi)\right)(t^*, \mu^*, m^*)
            \\ ~ = & ~
             -\left(\pa_t \phi
                +
                G^e(\cdot, D_\mu \phi,D_{x\mu} \phi, \pa^2_{mm} \phi)\right)(t^*, \mu^*, m^*)
                - h
            ~ \le ~ - h 
            ~ <   ~ 0.
        \end{align*}
        
        \medskip
                
       \noindent \emph{Step 2: Doubling the variables.}
        Recall from \eqref{eq:vartheta} that, for each $\theta = (t, \mu, m) \in [0,T] \times \Pc_2({\R}) \x \R $,
        $\vartheta(\theta)= e^{-Lt}(e^{C\sqrt{1+m^2}}+ \int_{{\R}}|x|^2\mu(dx))$, and $\vartheta$ is lower-semicontinuous. 
        
        Now for any $\e,\delta > 0$ and $(\theta, \iota) \in \left( [0,T] \x \Pc_2({\R}) \x \R  \right)^2 $, we define
        \begin{equation*}
            H_\e^\delta(\theta, \iota) 
            ~ := ~
            \ub_h(\theta) - \vb(\iota) - \frac{1}{2\e}d^2_F(\theta,\iota) - \delta\big(\vartheta(\theta) + \vartheta(\iota)\big),
        \end{equation*}
        We show that the supremum of $H^\delta_\e$ over all admissible $(\theta,\iota)$ is attained.
        
        In fact, there exists an element $\theta_0 \in [0,T]  \x \Pc_2({\R}) \x \R$, such that
        \begin{equation*}
            H_\e^\delta(\theta_0,\theta_0) + 2\delta\vartheta(\theta_0)
            ~ = ~
            \ub_h(\theta_0) - \vb(\theta_0)
            ~ \ge ~ 2r > 0,
        \end{equation*}
        If $\delta < \frac{r}{2\vartheta(\theta_0) + 1}$, then
        \begin{align*}
            \sup_{(\theta, \iota) \in ([0,T]   \x  \Pc_2({\R})\x \R)^2}
            H_\e^\delta(\theta, \iota)
            ~ \ge ~ 
            H_\e^\delta(\theta_0,\theta_0)
            ~ \ge ~
            r > 0.
        \end{align*}
        Take a sequence $\{(\theta_k,\iota_k)\}_{k \in \N_+}$ with $H_\e^\delta(\theta_k,\iota_k) > 0$ such that
        \begin{equation*}
            \lim_{k \to \infty}H_\e^\delta(\theta_k,\iota_k) 
            ~ = ~
            \sup_{(\theta, \iota) \in ([0,T]   \x  \Pc_2({\R})\x \R)^2}
            H_\e^\delta(\theta, \iota).
        \end{equation*}
        For each $k \in \N_+$, we have
        \begin{align*}
            \delta\big(\vartheta(\theta_k) + \vartheta(\iota_k)\big)
            &~ \le ~
            \ub_h(\theta_k) - \vb(\iota_k) - \frac{1}{2\e}d^2_F(\theta_k,\iota_k) \\
            &~ \le ~
            \ub_h(\theta_k) - \vb(\iota_k)
            ~ \le ~
            2M,
        \end{align*}
        where $M$ is a common upper bound for $u$ and $-v$.
        Since, for any $c > 0$, the sublevel set $\{\theta \in [0,T]  \x \Pc_2({\R})  \x \R: \vartheta(\theta) \le c\}$ is compact,  
        without loss of generality we can assume that $\{(\theta_k,\iota_k)\}_{k \in \N_+}$ is
        a convergent sequence in $[0,T] \x \Pc_2({\R}) \x \R$, and 
        $$
            \lim_{k \to \infty}
            (\theta_k, \iota_k)
            ~ = ~
            (\theta^\delta_\e,\iota^\delta_\e),
        $$
        for some $(\theta^\delta_\e,\iota^\delta_\e) \in ([0,T] \x \Pc_2({\R}) \x \R)^2$.
        As a consequence, the upper-semicontinuity of $H^\delta_\e $ yields that
        \begin{align*}
            &
            H_\e^\delta(\theta^\delta_\e,\iota^\delta_\e)
            ~ = ~
            \sup_{(\theta, \iota) \in ([0,T]  \x  \Pc_2({\R}) \x \R)^2}
            H_\e^\delta(\theta, \iota)
            ~>~ 0,
            \\~
            &
            \delta\big(\vartheta(\theta^\delta_\e) 
             + \vartheta\big(\iota^\delta_\e)\big)
             ~ \le ~
             \varliminf_{k \to \infty}
                \delta\big(\vartheta(\theta_k) 
                + \vartheta\big(\iota_k)\big)
             ~ \le ~
             2M.
        \end{align*}
        Again, without loss of generality we can assume that $\{(\theta^\delta_\e,\iota^\delta_\e)\}_{\e \in \N_+}$ is
        a convergent sequence in $[0,T] \x \Pc_2({\R}) \x \R$, and 
        $$
            \lim_{\e \to 0+}
            (\theta^\delta_\e,\iota^\delta_\e)
            ~ = ~
            (\theta^\delta,\iota^\delta),
        $$
        for some $(\theta^\delta,\iota^\delta) \in ([0,T] \x \Pc_2({\R}) \x \R)^2$.

        We next prove that 
        \begin{equation}\label{eq:penalGoZero}
            \lim_{\e \to 0}
            \frac{1}{2\e}d^2_F(\theta^\delta_\e,\iota^\delta_\e)
            ~ = ~ 0.
        \end{equation}
        Indeed,
        \begin{align*}
             \varlimsup_{\e \to 0+}
             \frac{1}{2\e}d^2_F(\theta^\delta_\e,\iota^\delta_\e)
             & ~ =  ~
             \varlimsup_{\e \to 0+}
             \Big(- H_\e^\delta(\theta^\delta_\e,\iota^\delta_\e)
             - \delta\big(\vartheta(\theta^\delta_\e) 
                + \vartheta\big(\iota^\delta_\e)\big)
             + \ub_h(\theta^\delta_\e) - \vb(\iota^\delta_\e)\Big)
             \\ & ~ \le ~ 
              \varlimsup_{\e \to 0+}
              \Big(\ub_h(\theta^\delta_\e) - \vb(\iota^\delta_\e)\Big)
             ~ \le ~
             2M,
        \end{align*}
        and obtain that
        \begin{equation*}
            \varlimsup_{\e \to 0+} 
            d^2_F(\theta^\delta_\e,\iota^\delta_\e)
            ~ = ~
            0,
            ~\mbox{i.e.}~
            d^2_F(\theta^\delta,\iota^\delta)
            ~ = ~ 0,
            ~\mbox{or}~
            \theta^\delta 
            ~ = ~
            \iota^\delta.
        \end{equation*}
        Then it follows that
        \begin{align*}
            \varlimsup_{\e \to 0+}
             \frac{1}{2\e}d^2_F(\theta^\delta_\e,\iota^\delta_\e)
             & ~ =  ~
             \varlimsup_{\e \to 0+}
             \Big(- H_\e^\delta(\theta^\delta_\e,\iota^\delta_\e)
             - \delta\big(\vartheta(\theta^\delta_\e) 
                + \vartheta\big(\iota^\delta_\e)\big)
             + \ub_h(\theta^\delta_\e) - \vb(\iota^\delta_\e)\Big)
             \\ & ~ \le ~ 
             \varlimsup_{\e \to 0+}
             \Big(- \sup_{\theta \in [0,T] \x \Pc_2(\R) \x \R}H_\e^\delta(\theta,\theta)
             - \delta\big(\vartheta(\theta^\delta_\e) 
                + \vartheta\big(\iota^\delta_\e)\big)
             + \ub_h(\theta^\delta_\e) - \vb(\iota^\delta_\e)\Big)
             \\ & ~ \le ~ 
             - \sup_{\theta \in [0,T] \x \Pc_2(\R) \x \R}H_\e^\delta(\theta,\theta)
             - \delta\big(\vartheta(\theta^\delta) 
                + \vartheta\big(\iota^\delta)\big)
             + \ub_h(\theta^\delta) - \vb(\iota^\delta)
             \\ & ~ \le ~ 
             - \sup_{\theta \in [0,T] \x \Pc_2(\R) \x \R}H_\e^\delta(\theta,\theta)
             +  H_\e^\delta(\theta^\delta,\theta^\delta)
             ~ \le ~ 0,
        \end{align*}
        where the second inequality follows from the fact that $\sup_{\theta \in [0,T] \x \Pc_2(\R) \x \R}H_\e^\delta(\theta,\theta)$ is independent of $\e $, and
        that $\ub_h$, $-\vb$, and $-\vartheta$ are upper semicontinuous. 
        
        Moreover, it is clear that $t^\delta \neq T$, where $\theta^\delta = (t^\delta,\mu^\delta, m^\delta)$.
        Otherwise, it follows that
        \begin{equation*}
            0 < r \le
            \varlimsup_{\e \to 0+}
            H^\delta_\e (\theta^\delta_\e,\iota^\delta_\e)
            ~ \le ~
            \varlimsup_{\e \to 0+}
            \ub_h(\theta^\delta_\e) - \vb(\iota^\delta_\e)
            ~ \le ~
            \ub_h(\theta^\delta) - \vb(\iota^\delta)
            ~ \le ~ 
            0.
        \end{equation*}
\medskip
        \noindent \emph{Step 3: Contradiction by viscosity property and estimation.} Define 
        $
             \ut_h := \ub_h - \delta\vartheta,
             ~
             \vt := \vb + \delta\vartheta.
        $ 
        Without loss of generality, we assume that 
        $(\theta^\delta_{\e},\iota^\delta_{\e})$
        is a strict global maximum of 
        $$
            (\theta,\iota) \longmapsto H^\delta_\e (\theta,\iota) 
        = \ut_h(\theta) - \vt(\iota) - \frac{1}{2\e}d^2_F(\theta,\iota).
        $$
        Then by Ishii's lemma \ref{lemm:ishii}, it follows that 
        for any $\e>0$, there exist $X^*,Y^* \in \R$ such that 
        \begin{align*}
         & \left(\frac{1}{\e}(t^*-s^*), 
            \frac{D_\mu \Phi(\mu^*)(\cdot)}{2\e},
            \frac{D_{x\mu} \Phi(\mu^*)(\cdot)}{2\e},X^* \right) 
            \in \bar J^{2,+} \ut_h(\theta^*), \\
        & \left(\frac{1}{\e}(t^*-s^*),
            -\frac{D_\mu \Psi(\nu^*)(\cdot)}{2\e},
            -\frac{D_{x\mu} \Psi(\nu^*)(\cdot)}{2\e},-Y^* \right)
            \in \bar J^{2,-} \vt (\iota^*),
            \end{align*}
            as well as 
        \begin{align*}
            -\left(\frac{1}{\alpha}+\frac{2}{\e}\right)I_{2d}\leq 
            &\begin{pmatrix}
                X^*&0\\0& Y^*
            \end{pmatrix}
            \leq 
            \left(\frac{1}{\e}+\frac{2\alpha}{\e^2} \right)
            \begin{pmatrix}
                I_{d}&-I_{d}\\-I_{d}&I_{d}
            \end{pmatrix}.
        \end{align*}
        For clarity, we record the notation
        \begin{align*}
            &
            \theta^\delta_\e 
            ~ = ~
            (t^\delta_\e,\mu^\delta_\e,m^\delta_\e),
            ~
            \iota^\delta_\e 
            ~ = ~
            (s^\delta_\e,\nu^\delta_\e,n^\delta_\e), \\ &
            q(m) := e^{C\sqrt{1+m^2}}, 
            \\ &
            \kappa^\delta_\e (x)
            ~ := ~
            \frac{1}{\e}\int_{\R}
            \frac{Re(F_k(\mu^\delta_\e - \nu^\delta_\e)f^*_k(x))}
            {(1 + |k|^2)^\lambda}
            dk,
             ~
            x \in \R.
        \end{align*}
       Direct calculation yields that for $\theta = (t,\mu, m) \in [0,T] \x \Pc_2(\R)  \x \R$,
        \begin{align*}
            &
            D_\mu\vartheta(\theta,x)
            ~ = ~
             2e^{-Lt}x,
            \quad
            D_{x\mu}\vartheta(\theta,x)
            ~ = ~
             2e^{-Lt},
            \quad
            \nabla_m^2\vartheta(\theta)
            ~ = ~
             e^{-Lt} \nabla^2 q(m),
            \\
            &
            \frac{D_\mu \Phi(\mu^*)(\cdot)}{2\e}
            ~ = ~
            -\frac{D_\mu \Psi(\nu^*)(\cdot)}{2\e}
            ~ = ~
            \nabla\kappa^\delta_\e(\cdot),
            \quad
            \frac{D_{x\mu} \Phi(\mu^*)(\cdot)}{2\e}
            ~ = ~
            -\frac{D_{x\mu} \Psi(\nu^*)(\cdot)}{2\e}
            ~ = ~
            \nabla^2\kappa^\delta_\e(\cdot).
        \end{align*}
        For convenience, we denote by
        \begin{align*}
            &\alpha^\delta_\e 
            ~:= ~
            \bigg(
                \nabla\kappa^\delta_\e + 2\delta e^{-Lt^\delta_\eps}I_d,
                \nabla^2\kappa^\delta_\e + 2\delta e^{-Lt^\delta_\eps},
               X^* + \delta e^{-Lt^\delta_\eps}\nabla^2q(m^\delta_\eps)
            \bigg),
            \\~&
            \beta^\delta_\e 
            ~:= ~
            \bigg(
                \nabla\kappa^\delta_\e - 2\delta e^{-Ls^\delta_\eps}I_d,
                \nabla^2\kappa^\delta_\e - 2\delta e^{-Ls^\delta_\eps},
               -Y^* - \delta e^{-Ls^\delta_\eps}\nabla^2q(n^\delta_\eps)
            \bigg),
        \end{align*}
        Then,  the mapping
        $ \theta \longmapsto (D_{\mu}\vartheta, D_{x \mu} \vartheta, \nabla^2_m \vartheta)(\theta) $ is continuous, and it follows from the linearity and the definition of the closure of the jets that
        \begin{align*}
            &
            \bigg(
                \frac{1}{\e}(t^\delta_\e - s^\delta_\e) -L\delta\vartheta(\theta^\delta_\e),
                \alpha^\delta_\e 
            \bigg)
            \in
            \Jb^{2,+}\ub_h(\theta^\delta_\e),
            ~
            \bigg(
                \frac{1}{\e}(t^\delta_\e - s^\delta_\e) +L\delta\vartheta(\iota^\delta_\e),
                \beta^\delta_\e 
            \bigg)
            \in
            \Jb^{2,-}\vb(\theta^\delta_\e),
        \end{align*}
        By viscosity property of $\ub_h$, and $\vb$, one has that
        \begin{align*}
            -
            \frac{1}{\e}(t^\delta_\e - s^\delta_\e)
            +
            L\delta\vartheta(\theta^\delta_\e)
            -  
            G^e\Big(\theta^\delta_\e, \alpha^\delta_\e 
            \Big) 
            ~ \le 
            -h,
            \quad
            -
            \frac{1}{\e}(t^\delta_\e - s^\delta_\e)
            - 
            L\delta\vartheta(\iota^\delta_\e)
            -
            G^e\Big(\iota^\delta_\e, \beta^\delta_\e 
            \Big) 
            \ge 
            0,
        \end{align*}
        i.e.,
        \begin{align*}
            h \le G^e\Big(\theta^\delta_\e, 
                \alpha^\delta_\e 
            \Big)
            -
            G^e\Big(\iota^\delta_\e, 
                \beta^\delta_\e 
            \Big)
            -L\delta \vartheta(\theta^\delta_\e) -  L\delta\vartheta(\iota^\delta_\e),
        \end{align*}
        We also have the estimate
        \begin{align*}
            &
            G^e\Big(\theta^\delta_\e, 
                \alpha^\delta_\e 
            \Big)
            -
            G^e\Big(\iota^\delta_\e, 
                \beta^\delta_\e 
            \Big) 
            \\ \le ~ &
            G^e\Big(\theta^\delta_\e, 
                \nabla\kappa^\delta_\e,
                \nabla^2\kappa^\delta_\e,
                X^*
            \Big)
            -
            G^e\Big(\iota^\delta_\e, 
                \nabla\kappa^\delta_\e,
                \nabla^2\kappa^\delta_\e,
                -Y^*
            \Big) 
                \\ & +
            L_G\bigg(2 + e^{C_*\sqrt{1 + |m^\delta_\eps|^2}} + e^{C_*\sqrt{1 + |n^\delta_\eps|^2}} + \int_{\R}|x|^2(\mu^\delta_\e + \nu^\delta_\e)(dx)
            \bigg)
            (4 + 2(C+C^2)e^{C})(e^{-Lt^\delta_\eps} +e^{-Ls^\delta_\eps})\delta 
            \\ \le ~ &
            \omega_G\bigg(\frac{1}{\e}d^2_F(\theta^\delta_\e,\iota^\delta_\e) + d_F(\theta^\delta_\e,\iota^\delta_\e)\bigg)
            \bigg(2 + \int_{\R}|x|^2(\mu^\delta_\e + \nu^\delta_\e)(dx)
            \bigg)\Big(e^{C_*\sqrt{1 + |m^\delta_\eps|^2}} +  e^{C_*\sqrt{1 + |n^\delta_\eps|^2}}\Big)
                \\ & +
                L_G(4 + 2(C+C^2)e^{C})(e^{-Lt^\delta_\eps} +e^{-Ls^\delta_\eps})\delta  \bigg(2  + \int_{\R}|x|^2(\mu^\delta_\e + \nu^\delta_\e)(dx)
            \bigg)\Big(e^{C_*\sqrt{1 + |m^\delta_\eps|^2}} +  e^{C_*\sqrt{1 + |n^\delta_\eps|^2}}\Big),
        \end{align*}
        where the first inequality follows from Assumption \ref{ass:comparison} (i), and the second follows from Assumption \ref{ass:comparison} (ii).
        Letting $\e \to 0$ and choosing $L$ sufficiently large, we obtain
        \begin{align*}
             h 
            ~ \le &~
            8(2 + (C+C^2)e^C) L_G
            \bigg(1 + \int_{\R}|x|^2\mu^\delta(dx)
            \bigg)e^{C_*\sqrt{1 + |m^\delta|^2}} e^{-Lt^\delta} \delta  
            \\ &- 2L \delta e^{-Lt^\delta}\int_{\R}|x|^2\mu^\delta(dx)e^{C_*\sqrt{1 +  |m^\delta|^2}}
            \\ ~ \le &~
            8(2 + (C+C^2)e^C)\delta.
        \end{align*}
        Finally, letting $\delta$ go to $0$, one gets the desired contradiction. 
    \end{proof}

\section{Filtering and mean field control applications}\label{sec:applications}

\subsection{Controlled filtering with state- and law-dependent common noise}\label{sec:appli}
As an application, we show that the value function of a stochastic control problem with partial observation is the unique viscosity solution of a parabolic equation. This section continues the filtering/Bellman program of Bandini, Cosso, Fuhrman, and Pham~\cite{MR3907014}, now in a setting where the common-noise coefficient depends on the state and its conditional law. Relative to the comparison results in \cite{BEHZ} and \cite{BCEQTZ25}, the new feature is this state- and law-dependent common-noise coefficient.

Let $B,W$ be two independent Brownian motions on a filtered probability space $(\Omega, \mathcal{F},\mathbb P)$, and let $A \subset \R$ be a compact control set. Consider coefficients $b:\R \x \Pc(\R)\times A \to \R$, $\sigma:\R\x \Pc(\R) \times A \to \R$, $\sigma_0: \R \x \Pc_2(\R) \to \R$, and $\tilde\sigma:A \to \R$.
The space of probability measures $\Pc(\R)$ is equipped with the Fourier-Wasserstein distance $\rho_F$.
We consider the following McKean--Vlasov stochastic differential equation:
\begin{align*}
dX^{t,\mu,\alpha}_s&=b(X^{t,\mu,\alpha}_s,m^{t,\mu,\alpha}_s,\alpha_s) \, ds + \sigma(X^{t,\mu,\alpha}_s,m^{t,\mu,\alpha}_s,\alpha_s) \, dB_s + \sigma_0(X^{t,\mu,\alpha}_s, m^{t,\mu,\alpha}_s)\tilde\sigma(\alpha_s) \, dW_s, \, \text{$t \leq s \leq T$}, \\
X_t^{t,\mu,\alpha}&=\xi,
~
m^{t,\mu,\alpha}_s := \Lc(X^{t,\mu,\alpha}_s \, |\,\Fc^W_s),
\end{align*}
where $\xi$ is independent of $B,W$ with distribution $\mu \in \Pc_2(\R)$ and $\alpha: \Omega \x [0,T] \rightarrow A$ is an admissible control adapted only to the filtration generated by $W$. Since $\xi$ is independent of $B,W$, standard conditioning shows that the distribution of $(X_s^{t,\mu,\alpha},\alpha_s)$ is independent of the choice of $\xi$.

Then $m_t$, the conditional law of state $X$ at time $t$, satisfies the equation
\begin{align}\label{eq:conditionlaw}
    d \langle m^{t,\mu,\alpha}_s, f \rangle &=  \langle m^{t,\mu,\a}_s, L^{\alpha_s,\cdot} f \rangle \,ds + \langle m^{t,\mu,\a}_s, M^{\a_s,\cdot} f \rangle \, dW_s, \quad \text{ $t \leq s \leq T$}  \\
    m^{t,\mu,\alpha}_t &= \mu, \notag
\end{align}
where $f:\R \x \Pc(\R) \longrightarrow \R$ is any $C^2$ test function and
\begin{align*}
    L^{a,\nu} f(\cdot):=&  b(\cdot,\nu,a) \pa_xf(\cdot) +\frac{1}{2} Tr \left(  \big(\sigma(\cdot,\nu,a)\sigma(\cdot,\nu,a)  +\sigma_0(\cdot,\nu)\tilde\sigma(a)\sigma_0(\cdot,\nu)\tilde\sigma(a)\big) \pa_x^2  f(\cdot)  \right),\\
    M^{a,\nu} f(\cdot):=&   \sigma_0(\cdot,\nu)\tilde\sigma(a) \pa_xf(\cdot).
\end{align*}

Take $\mathcal{A}:=\{\alpha=(\alpha_s)_{0\leq s \leq T}: \alpha : \Om \x [0,T] \longrightarrow A \text{ is measurable and $\F^W$ adapted} \}$ to be the set of all admissible controls. Given a running cost $r:\R \times \Pc(\R) \x  A \to \R$, and a terminal cost $l: \R \to \R$, we define the cost of control $\alpha \in \mathcal{A}$,
\begin{align*}
    J(t,\mu, \alpha):=\E \left[\int_t^T r(X^{t,\mu,\alpha}_s, m^{t,\mu,\alpha}_s, \alpha_s) \, ds + l(X^{t,\mu,\alpha}_T)  \right].
\end{align*}
We consider the following optimization problem:
\begin{align*}
    v(t,\mu)=\inf_{\alpha \in \mathcal{A}} J(t,\mu,\alpha).
\end{align*}

\begin{assumption}\label{assumption:application}
\begin{itemize}
    \item[(i)] The functions $b,\sigma,\tilde\sigma, r,l$ are bounded and Lipschitz continuous in their domains uniformly in $a$.
    \item[(ii)] $\sigma$ is uniformly elliptic, i.e., there exists a positive constant $\delta$ such that $ |\sigma(x,\mu,a) \xi |^2  \geq \delta |\xi|^2$ for any $x,a,\xi,\mu$.
    \item[(iii)] The functions $ x\longmapsto \sigma(x,\mu,a) \in W^{\lceil\lambda+3/2\rceil, \infty}(\R)$,
    $ x\longmapsto b(x,\mu,a) \in W^{\lceil\lambda\rceil, \infty}(\R)$,
    $ x\longmapsto r(x,\mu,a) \in H^{\lambda}(\R)$ uniformly in $a \in A$, $\mu \in \Pc(\R)$.
\end{itemize}
\end{assumption}

\begin{remark}
In \cite[Assumption 4.1]{BEHZ}, the uniform ellipticity of $x \mapsto \sigma(x,\mu,a)$ appears to be incompatible with the imposed integrability condition. The argument in \cite{BEHZ}, however, only uses a uniform bound on $x \mapsto \sigma(x,\mu,a)$, as is clear from the last paragraph of the proof of \cite[Proposition 5.6]{BEHZ}. Accordingly, we replace the integrability requirement by the condition
\[
x \mapsto \sigma(x,\mu,a) \in W^{\lceil\lambda+3/2\rceil, \infty}(\R).
\]
The proof in \cite{BEHZ} then carries over under this modified assumption.
\end{remark}

We define, for $(a,\mu,p,q,M) \in A  \times \Pc_2(\R) \times B_{q} \times B_{q} \times \R $,
\begin{align*}
    K(a,\mu,p,q,M):=& \int  r(x,\mu,a)+ b(x,\mu,a) p(x) +\frac{1}{2} q(x) \sigma^2 (x,\mu,a) \, \mu(dx) +\frac{1}{2}\tilde \sigma^2(a) M.
\end{align*}

\begin{theorem}[Viscosity solution]\label{thm:viscosity_property}
    Under Assumptions~\ref{assump:regularity} and~\ref{assumption:application}, the value function is continuous and is the unique viscosity solution of the equation
    \begin{align}\label{eq:filtering}
        -\pa_t v(t,\mu)=& \inf_{a \in A} K(a, \mu, D_{\mu} v(t,\mu), D_{x \mu} v(t,\mu), \mathcal{H}_{\sigma_0}v(t,\mu)) \notag \\
        v(T,\mu)=&\mu(l).
    \end{align}
\end{theorem}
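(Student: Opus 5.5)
We plan to prove Theorem~\ref{thm:viscosity_property} in three stages: continuity of $v$, the viscosity property in the sense of Definition~\ref{def:viscosity}, and uniqueness via Theorem~\ref{thm:comparison}.

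\emph{Continuity.} We first show that $v$ is bounded and continuous on $[0,T]\x\Pc_2(\R)$. Boundedness follows from the boundedness of $r$ and $l$. For continuity in $\mu$ we use a synchronous coupling: for $\xi,\xi'$ with laws $\mu,\mu'$ and a common control $\alpha$, we estimate $\E|X_s-X'_s|^2$ and $\rho_F(m_s,m'_s)$ by Gronwall. Here the Lipschitz dependence of $b,\sigma,\sigma_0$ on the measure, measured through $|\cdot|_{-\lambda}$, is controlled by the negative-Sobolev stability of the conditional law. This stability is the commutator estimate of Section~\ref{sec:commutator}, used in the spirit of \cite{BEHZ,GozziSwiech2000}. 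Continuity in $t$ follows from standard moment bounds. Since $A$ is compact and all estimates are uniform in $\alpha$, they pass to the infimum.

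\emph{Viscosity property.} We work with $\bar v(t,\mu,m)=v(t,\psi(\cdot,\mu,m)_\sharp\mu)$ and establish a dynamic programming principle for $v$. The key observation is the Lamperti structure of Section~\ref{sec:lamperti}. Let $\bar m_s$ be the solution of $d\bar m_s=\tilde\sigma(\alpha_s)\,dW_s$. Then the process
\[
\tilde\mu_s:=\phi(\cdot,m_s,\bar m_s)_\sharp m_s
\]
has no $dW$ term in the $\mu$-direction. This is because the flow \eqref{eq:forward-flow} was built so that the common-noise direction coincides with $\pa_m$. Consequently the state $(s,\tilde\mu_s,\bar m_s)$ evolves with noise only in the scalar coordinate $\bar m_s$.

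Take a partial $C^2$-regular test function $\varphi$ touching $\bar v$ from above (resp.\ below) at $(t,\mu,m)$. We apply It\^o's formula to $\varphi(s,\tilde\mu_s,\bar m_s)$; the chain rule along flows of measures from \cite{carmona_probabilistic_2018_vol2} suffices, because $\tilde\mu_s$ is driven only by $ds$ and $dB$-averaged terms. Combining this with the dynamic programming principle and the identities of Subsection~\ref{subsec:augmented-equation} yields the generator
\[
\pa_t\varphi+\inf_a K\big(a,\mu_m,P,Q,\pa^2_{mm}\varphi-\textstyle\int P\,\Gamma\,d\mu_m\big).
\]
This is exactly $G^e$. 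The $\Gamma$ correction appears because the It\^o correction of the Stratonovich-type flow has to be subtracted. Subsolution follows by taking constant controls; supersolution follows by the usual argument by contradiction with $\eta$-optimal controls.

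\emph{Uniqueness.} This follows from Theorem~\ref{thm:comparison} once Assumption~\ref{ass:comparison} is verified for the Hamiltonian $G=\inf_a K$.
\begin{itemize}
\item[(i)] This part is easy: $K$ is linear in $(p,q,M)$ with bounded coefficients, and $P$ and $Q$ are controlled by the derivative bounds on $\phi$ and $D_\mu\phi$ from Section~\ref{sec:flow-estimates}. The growth factor $e^{C_*|m|}$ absorbs the exponential growth of these flow derivatives in $m$.
\item[(ii)] This part is the main obstacle. With $p=\nabla\kappa$ and $q=\nabla^2\kappa$, we must bound the difference of the $K$-terms at $(\mu_m,P,Q)$ and $(\nu_n,P',Q')$ by $\omega(\e^{-1}d_F^2+d_F)$.
\end{itemize}

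To handle (ii), we rewrite $\int b\,P\,d\mu_m$ as an integral against $\mu$ of $b(\psi(\cdot))\,\nabla\kappa\,\pa_x\phi(\psi(\cdot))$, plus a nonlocal term. We then view the difference as a pairing of $\mu-\nu$ with a function of the form $g\cdot\nabla\kappa$. The Fourier structure of $\kappa$ lets us bound this by $\e^{-1}\rho_F^2$ plus commutator errors $\|[g,\nabla]\|$ in $H^{-\lambda}$, where $g$ involves $b$, $\sigma^2$, and the flow composed with itself. This is where we use the Sobolev composition bounds for $\phi$ and $\psi$ (requiring $W^{k,\infty}$ regularity with $k\ge\lceil\lambda+3/2\rceil$) together with Assumption~\ref{assumption:application}(iii). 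The $\pa_{mm}$ term is handled by $X\le -Y$ and the common factor $\tfrac12\tilde\sigma^2(a)$. The $\Gamma$-correction and the mismatch between $m$ and $n$ are handled by the Lipschitz estimates of the flow in $m$ and in $|\cdot|_{-\lambda}$.
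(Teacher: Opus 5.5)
Your three-stage architecture matches the paper's. Your derivation of the viscosity property through the transformed conditional law $\phi(\cdot,m_s,\bar m_s)_\sharp m_s$, with $d\bar m_s=\tilde\sigma(\alpha_s)\,dW_s$, is a reasonable way to flesh out what the paper only cites (the argument of Proposition 6.3 in \cite{DaJaSe23} combined with the pushforward representation).

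The genuine gap is in your verification of Assumption~\ref{ass:comparison}(ii), at the second-order term. After pulling back through the flow, the $q$-part of $K$ produces
\[
\int_\R A(x)\,\nabla^2\kappa(x)\,(\mu-\nu)(dx),\qquad A=\tfrac12\big[\sigma^2(\cdot,\mu_m,a)\big(\pa_x\phi(\cdot,\mu_m,m)\big)^2\big]\circ\psi,
\]
with $\kappa=\varepsilon^{-1}\Jc_{2\lambda}\eta$ and $\eta=\mu-\nu$.
\begin{itemize}
\item For constant $A$ this term equals $-\frac{A}{\varepsilon}\int_\R\frac{|k|^2|F_k(\eta)|^2}{(1+|k|^2)^\lambda}\,dk$. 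This is of size $\varepsilon^{-1}|\eta|_{1-\lambda}^2$, a full derivative above $\rho_F^2=|\eta|_{-\lambda}^2$.
\item For variable $A$, the commutator remainder is of size $\varepsilon^{-1}|\eta|_{1-\lambda}|\eta|_{-\lambda}$.
\end{itemize}
Neither quantity can be bounded in absolute value by $\omega_G(\varepsilon^{-1}d_F^2+d_F)$. So ``$\varepsilon^{-1}\rho_F^2$ plus commutator errors'' does not close for this term. It does close for the first-order terms, where antisymmetry of $c\,\pa_x$ gives $\int c\,\nabla\kappa\,d\eta=O(\varepsilon^{-1}|\eta|_{-\lambda}^2)$.

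The missing idea is to use the sign of the diffusion coefficient.
\begin{itemize}
\item Assumption~\ref{assumption:application}(ii) gives $\sigma^2\ge\delta$, and Theorem~\ref{thm:spatial-regularity} gives $\pa_x\phi\ge e^{-C|m|}$. Hence the transformed coefficient $A$ is uniformly elliptic, with a lower bound depending on $m$.
\item Proposition~\ref{prop:commutator} then yields $\int(\Ac+\Bc)(\Jc_{2\lambda}\eta)\,d\eta\le-\frac{\delta}{4}|\eta|^2_{1-\lambda}+c|\eta|^2_{-\lambda}$.
\item The negative $H^{1-\lambda}$ term is exactly what absorbs the second-order commutator error, via Young's inequality.
\item This leads to the paper's bound $K_1\le\frac{Ce^{C_*|m|}}{\varepsilon}|\eta|^2_{-\lambda}+C|\eta|_{-\lambda}-\frac{Ce^{C_*|m|}\delta}{4\varepsilon}|\eta|^2_{1-\lambda}$, which verifies (ii).
\end{itemize}
Your proposal never invokes Assumption~\ref{assumption:application}(ii) or the sign of $A$, so step (ii) fails as written.

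A smaller point concerns the $\Gamma$-correction $\frac12\tilde\sigma^2(a)\int P\,\Gamma\,d\mu_m$. At fixed $m$ it is itself a first-order term in $\nabla\kappa$ paired against $\mu-\nu$. It therefore needs the same antisymmetry/commutator treatment as the $b$-term; the paper folds it into the effective drift $\bt$. Lipschitz estimates of the flow alone do not suffice for it.
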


\proof [Proof of Theorem \ref{thm:viscosity_property}]

Under the boundedness and Lipschitz assumptions on $b,\sigma,\tilde \sigma,\sigma_0, r,l$, standard stability estimates for the controlled filter imply that $(t,\mu) \mapsto v(t,\mu)$ is continuous.
An argument parallel to Proposition 6.3 in \cite{DaJaSe23}, combined with the measure-dependent flow $\psi(x,\mu,m)$ and the pushforward representation of the conditional law, shows that the value function is a viscosity solution of \eqref{eq:filtering}.

For uniqueness, we verify that the Hamiltonian
$$G^e: [0,T] \x \Pc_2(\mathbb R) \x \mathbb R \times B_{q} \x B_{q} \x \R \longrightarrow \mathbb R; \, (t,\mu,m,p,q,M) \mapsto \inf_{a \in A} K^e(a,\mu,m,p,q,M)$$ satisfies Assumption~\ref{ass:comparison},
where $$K^e(a,\mu,m,p,q,M):= K\bigg(a,\mu_m,P(\cdot,\mu,m),Q(\cdot,\mu,m),M - \int P(x,\mu,m)\Gamma(x,\mu_m)\mu_m(dx)\bigg).$$
Recall from Section~\ref{sec:change} that $\mu_m = \psi(\cdot,\mu,m)_\sharp\mu$ and for any $(x,\mu,m) \in \R \x \Pc_2(\R) \x \R$,
\begin{align*}
    P(x,\mu,m) := p(\phi(x,\mu_m,m))\pa_x\phi(x,\mu_m,m) ~+& \int_\R p(z)D_\mu \phi(\psi(z,\mu_m,m), \mu_m,m)(x)\mu(dz),
    \\
    Q(x,\mu,m) := q(\phi(x,\mu_m,m))\big(\pa_x\phi(x,\mu_m,m)\big)^2 ~
    +&~  p(\phi(x,\mu_m,m))\pa^2_x\phi(x,\mu_m,m)
        \\& + ~\int_\R p(z)D_{x\mu} \phi(\psi(z,\mu_m,m), \mu_m,m)(x)\mu(dz),
\end{align*}
where $\phi$ is the backward flow solving \eqref{eq:backward-flow}.

\vspace{0.5em}
\noindent \textbf{For Assumption~\ref{ass:comparison} (i)}, it suffices to show that for any fixed $a$
\begin{align}\label{eq:filtering1}
   & \left| G^e(t,\mu,m,p_1,q_1,M_1)- G^e(t,\mu,m,p_2,q_2,M_2)  \right| \\
   \le ~& \left| K^e(a, \mu,m, p_1,q_1,M_1) - K^e(a,\mu,m,p_2,q_2,M_2) \right|  \notag\\
     \leq ~&L_G \bigg(1 +
            \int_{\R}|x|^2 \, \mu(dx)\bigg)e^{C_*\sqrt{1+m^2}}
             \times \Big(|p_1 - p_2|_q + |q_1 - q_2|_q  + |M_1 - M_2| \Big), \notag
\end{align}
where $L_G$ is some positive constant independent of $a$.
Indeed, from the construction of $K$ and the uniform boundedness of $b,\sigma,\tilde\sigma,\pa_x\sigma_0, D_\mu\sigma_0, \sigma_0$, we have
\begin{align*}
    & \left| K^e(a, \mu,m, p_1,q_1,M_1) - K^e(a,\mu,m,p_2,q_2,M_2) \right| \\
     \leq ~ & C \left( \int_{\R} |P_1(x)-P_2(x)| + |Q_1(x)-Q_2(x)| \, \mu_m(dx) + |M_1-M_2| \right) .
\end{align*}
By the linearity of $P,Q$ in $(p,q)$ and the uniform bounds on $\pa_x\phi$, $D_\mu\phi$, $\pa^2_x\phi$, $D_{x\mu}\phi$ provided by Theorems~\ref{thm:spatial-regularity}, together with the change of measure $\mu \mapsto \mu_m$, we obtain
\begin{align*}
    \int_{\R} |P_1 - P_2| \, \mu_m(dx) &\le Ce^{C|m|} \int_{\R} |p_1 - p_2| \, \mu_m(dx) \le Ce^{C|m|}\bigg(1 + \int_{\R}|x|^2\mu_m(dx)\bigg) |p_1-p_2|_q,
\end{align*}
and an analogous estimate for $Q_1-Q_2$. Since $\mu_m = \psi(\cdot,\mu,m)_\sharp\mu$ and $\psi$ has at most exponential growth in $m$ (which can be controlled by $e^{C_*\sqrt{1+m^2}}$ using the $\vartheta$ function \eqref{eq:vartheta}), the second moment of $\mu_m$ is controlled. Hence \eqref{eq:filtering1} is verified.

\vspace{0.5em}
\noindent
\textbf{For Assumption~\ref{ass:comparison} (ii)}, recalling that
\begin{align*}
      \kappa(x)
      ~ = ~
      \frac{1}{\e}\int_{\R}
            \frac{Re(F_k(\mu - \nu)f^*_k(x))}
            {(1 + |k|^2)^\lambda}
            dk,
    \quad
    \mu_m := \psi(\cdot,\mu,m)_\sharp\mu,
    \quad
    \nu_n := \psi(\cdot,\nu,n)_\sharp\nu,
\end{align*}
and that $M \mapsto K(a,\mu,p,q,M)$ is increasing (since $\frac{1}{2}\tilde\sigma^2(a) \ge 0$), it is enough to estimate
\begin{equation}\label{eq: verify Assumption}
    \begin{aligned}
        & K(a, \mu_m, X(\cdot,\mu,m), Y(\cdot,\mu,m), M) - K(a,\nu_n, X(\cdot,\nu,n), Y(\cdot,\nu,n), M)
        \\
         \leq~ &K(a, \mu_m, X(\cdot,\mu,m), Y(\cdot,\mu,m), M) -
         K(a,\nu_m, X(\cdot,\nu,m), Y(\cdot,\nu,m), M)
         \\
        & + K(a,\nu_m, X(\cdot,\nu,m), Y(\cdot,\nu,m), M) - K(a,\nu_n, X(\cdot,\nu,n), Y(\cdot,\nu,n), M)
        \\
        := & K_1(m,\mu,\nu) + K_2(m,n,\nu)
    \end{aligned}
\end{equation}
where, for $(x,\mu,m) \in \R \x \Pc_2(\R) \x \R$, the lift of $\kappa$ through the change of variables is
\begin{align*}
    X(x,\mu,m) :=&~ \nabla\kappa(\phi(x,\mu_m,m))\pa_x\phi(x,\mu_m,m) + \int_\R \nabla\kappa(z)D_\mu \phi(\psi(z,\mu_m,m), \mu_m,m)(x)\mu(dz),
    \\
    Y(x,\mu,m) :=&~ \frac{1}{2}\nabla^2\kappa(\phi(x,\mu_m,m))\big(\pa_x\phi(x,\mu_m,m)\big)^2
        + \frac{1}{2}\nabla\kappa(\phi(x,\mu_m,m))\pa^2_x\phi(x,\mu_m,m) \\
        & + \frac{1}{2}\int_\R \nabla\kappa(z)D_{x\mu} \phi(\psi(z,\mu_m,m), \mu_m,m)(x)\mu(dz).
\end{align*}
By Theorems \ref{thm:spatial-regularity} and \ref{thm:Lipschitz}, we have the estimates
\begin{equation}\label{eq:estimate of Xt,Yt}
    \begin{split}
        \sup_{(x,\mu) \in \R \x \Pc_2(\R)}&\Big(|X(x,\mu,m)| + |Y(x,\mu,m)|\Big) 
        \le \frac{Ce^{C_*|m|}|\mu - \nu|_{-\lambda}}{\e},
        \\
        \sup_{\mu \in \Pc_2(\R)}&\Big(|X(x,\mu,m) - X(y,\mu,m)| + |Y(x,\mu,m) - Y(y,\mu,m)|\Big) \le \frac{Ce^{C_*|m|}|\mu - \nu|_{-\lambda}|x-y|}{\e},
        \\
        \sup_{(x,\mu) \in \R \x \Pc_2(\R)}&\Big(|X(x,\mu,m) - X(x,\mu,n)| + |Y(x,\mu,m) - Y(x,\mu,n)|\Big) 
        \\& \qquad\le \frac{C(e^{C_*|m|}+e^{C_*|n|})|\mu - \nu|_{-\lambda}|m-n|}{\e}.
    \end{split}
\end{equation}

\vspace{0.5em}
\noindent \textbf{Estimate of $K(m,\mu_m,\nu_m)$ in \eqref{eq: verify Assumption}.}
We define the effective drift
\begin{align*}
    \bt(x,\mu,a) :=~& b(x,\mu,a) - \frac{1}{2}\tilde\sigma^2(a)\pa_x\sigma_0(x,\mu)\sigma_0(x,\mu) - \frac{1}{2}\tilde\sigma^2(a)\int_{\R} D_\mu\sigma_0(x,\mu)(y)\sigma_0(y,\mu)\mu(dy).
\end{align*}
By the definition of $K$,
\begin{equation}\label{eq:final1}
    \begin{split}
    &K_1(m,\mu,\nu)
    \\ & = \int_{\R} r(x,\mu_m, a)\,\mu_m(dx) - \int_{\R} r(x,\nu_m,a)\,\nu_m(dx)
        \\ &+\int_{\R} \bt(x,\mu_m, a) X(x,\mu,m)\, \mu_m(dx) -
            \int_{\R} \bt(x,\nu_m, a)  X(x,\nu,m)\, \nu_m(dx)
        \\
        & + \frac{1}{2} \int_{\R} \sigma^2(x,\mu_m, a) Y(x,\mu,m)\, \mu_m(dx) -
            \frac{1}{2} \int_{\R} \sigma^2(x,\nu_m, a)  Y(x,\nu,m)\, \nu_m(dx)
    \\
    \le&  \int_{\R} \big(r(x,\mu_m, a) -r(x,\nu_m,a)\big)\,\nu_m(dx) + \int_{\R} r(x,\mu_m,a)\,(\mu_m - \nu_m)(dx)
        \\ &+\int_{\R} \big(\bt(x,\mu_m, a) - \bt(x,\nu_m, a)
        \big)X(x,\mu,m)\, \nu_m(dx) 
        \\ &+ \int_{\R} \bt(x,\mu_m, a)\Big(X(x,\mu,m)\mu_m(dx) - X(x,\nu,m)\nu_m(dx)\Big)
        \\ &+\int_{\R} \big(\sigma^2(x,\mu_m, a) - \sigma^2(x,\nu_m, a)
        \big)Y(x,\mu,m)\, \nu_m(dx)
        \\ & + \int_{\R} \sigma^2(x,\mu_m, a)\Big(Y(x,\mu,m)\mu_m(dx) - Y(x,\nu,m)\nu_m(dx)\Big)
    \\ \leq&~
    \frac{Ce^{C_*|m|}}{\e}|\mu_m -\nu_m|_{-\lambda}|\mu - \nu|_{-\lambda} + C |\mu_m -\nu_m|_{-\lambda} + \frac{Ce^{C_*|m|}|\mu -\nu|^2_{-\lambda}}{\e}
    \\ &+ \int_{\R} \bt(x,\mu_m, a)\Big(X(x,\mu,m)\mu_m(dx) - X(x,\nu,m)\nu_m(dx)\Big)
    \\ &+ \int_{\R} \sigma^2(x,\mu_m, a)\Big(Y(x,\mu,m)\mu_m(dx) - Y(x,\nu,m)\nu_m(dx)\Big),
    \end{split}
\end{equation}
where the last inequality holds by the Lipschitz property of $r,b,\pa_x\sigma_0,D_\mu\sigma_0,\sigma$ in measure, and estimate \eqref{eq:estimate of Xt,Yt}.

\vspace{0.5em}
\noindent

By the dual form of the negative Sobolev norm and Theorem~\ref{thm:spatial-regularity}, we have
\begin{align*}
    |\mu_m - \nu_m|^2_{-\lambda}
    = &
        \sup_{f \in H^\lambda(\R),|f|_\lambda \le 1}
    \int_\R f(\psi(x,\mu,m))(\mu - \nu)(dx)
    \\ \le &
    \sup_{g \in H^\lambda(\R),|g|_\lambda \le C_\lambda}
    \int_\R g(\mu - \nu)(dx)
    \le~ C_\lambda|\mu - \nu|_{-\lambda}.
\end{align*}
To finish the estimate, we reorganize the remaining terms
\begin{align*}
    &\int_{\R} \bt(x,\mu_m, a)\Big(X(x,\mu,m)\mu_m(dx) - X(x,\nu,m)\nu_m(dx)\Big)
    \\ &+ \int_{\R} \sigma^2(x,\mu_m, a)\Big(Y(x,\mu,m)\mu_m(dx) - Y(x,\nu,m)\nu_m(dx)\Big)
    \\ \le & \int_\R B(x,\mu,m,a)\nabla\kappa(x)(\mu - \nu)(dx) ~+~
        \int_\R A(x,\mu,m,a)\nabla^2\kappa(x)(\mu - \nu)(dx)
        + \frac{Ce^{C_*|m|}|\mu - \nu|^2_{-\lambda}}{\e}
\end{align*}
where
\begin{align*}
    B(x,\mu,m,a) := &
    \bigg(
    \int_\R \Big(\bt(z,\mu_m,a)D_\mu\phi(\cdot,\mu_m,m)(z)
        + \frac{1}{2}\sigma^2(z,\mu_m,a)D_{z\mu}\phi(\cdot,\mu_m,m)(z)\Big) \mu_m(dz)
            \\ &+ 
        \bt(\cdot,\mu_m,a)\pa_x\phi(\cdot,\mu_m,m) 
        + \frac{1}{2}\sigma^2(\cdot,\mu_m,a)\pa^2_x\phi(\cdot,\mu_m,m)
        \bigg) \circ \psi(x,\mu_m,m),
    \\
    A(x,\mu,m,a) := &\frac{1}{2}\sigma^2(\cdot,\mu_m,a)\big(\pa_x\phi(\cdot,\mu_m,m)\big)^2 \circ \psi(x,\mu_m,m),
\end{align*}

By Assumption \ref{assump:regularity} with $k = \lambda$, Assumption \ref{assumption:application}, Theorem \ref{thm:spatial-regularity}, and Theorem 6.1 in \cite{behzadan2021multiplication}, it follows that
\begin{align*}
    \sup_{(a,\mu) \in A \x \Pc_2(\R)}\sum_{i = 0}^{\lceil\lambda\rceil}\|\pa_x^iB(\cdot,\mu,m,a)\|_{L^\infty} + 
   \sum_{i = 0}^{\lceil\lambda+3/2\rceil}\|\pa_x^iA(\cdot,\mu,m,a)\|_{L^\infty}
    \le Ce^{C_*|m|}.
\end{align*}

Applying Proposition~\ref{prop:commutator} with $\eta=\mu-\nu$ and $\kappa= \frac{1}{\e} \mathcal{J}_{2\lambda} \eta$, we obtain
\begin{align*}
    K_1(m,\mu,\nu) \le  \frac{Ce^{C_*|m|}}{\e}|\mu -\nu|_{-\lambda}^2 + C|\mu -\nu|_{-\lambda}-\frac{Ce^{C_*|m|}\d}{4\e} |\mu -\nu|_{1-\lambda}^2.
\end{align*}


\vspace{0.5em}
\noindent
\textbf{Estimate of $K_2(m,n,\nu)$ in \eqref{eq: verify Assumption}.}
Using the Lipschitz property of $r,b,\pa_x\sigma_0,D_\mu\sigma_0,\sigma^2$ and the uniform bounds on $\nabla\kappa, \nabla^2\kappa$, the second term is bounded by
\begin{align*}
     & \int_{\R} \big(r(x,\nu_m,a)-r(x,\nu_n,a)\big) \, \nu_m(dx)
     + \int_{\R} r(x,\nu_n,a) \, (\nu_m - \nu_n)(dx)
        \\ +& \int_{\R} (\bt(x,\nu_m,a)-\bt(x,\nu_n,a)) X(x,\nu,m) \, \nu_m(dx)+ \int_\R \bt(x,\nu_n,a)\big(X(x,\nu,m)-X(x,\nu,n)\big)\,\nu_m(dx)
        \\  +&  \int_\R \bt(x,\nu_n,a)X(x,\nu,n)\,(\nu_m - \nu_n)(dx) +\int_{\R} (\sigma^2(x,\nu_m,a)-\sigma^2(x,\nu_n,a)) Y(x,\nu,m) \, \nu_m(dx)
        \\  +&  \int_\R \sigma^2(x,\nu_n,a)\big(Y(x,\nu,m)-Y(x,\nu,n)\big)\,\nu_m(dx) + \int_\R \sigma^2(x,\nu_n,a)Y(x,\nu,n)\,(\nu_m - \nu_n)(dx)
    \\ \le &~ 
    C|\nu_m - \nu_n|_{-\lambda} + \frac{C(e^{C_*|m|}+e^{C_*|n|})|\nu_m - \nu_n|_{-\lambda}|\mu-\nu|_{-\lambda}}{\e}
        \\&+ \int_{\R}\Big(\bt(\cdot,\nu_n,a)X(\cdot,\nu,n)+\sigma^2(\cdot,\nu_n,a)Y(\cdot,\nu,n)\Big)\circ (\psi(x,\nu_m,m) - \psi(x,\nu_n,n))\nu(dx)
    \\ \le &~ C|\nu_m - \nu_n|_{-\lambda} + \frac{C(e^{C_*|m|}+e^{C_*|n|})|\nu_m - \nu_n|_{-\lambda}|\mu-\nu|_{-\lambda}}{\e} 
        \\&+ \frac{Ce^{C_*|n|}|\psi(x,\nu_m,m) - \psi(x,\nu_n,n)||\mu-\nu|_{-\lambda}}{\e}
     \\ \le &~ C|\nu_m - \nu_n|_{-\lambda} + \frac{C(e^{C_*|m|}+e^{C_*|n|})(|m-n| + |\nu_m - \nu_n|_{-\lambda})|\mu-\nu|_{-\lambda}}{\e} 
    ,
\end{align*}
where the $|\cdot|_{-\lambda}$-distance between $\nu_m$ and $\nu_n$ is controlled by $C|m-n|$ using the Lipschitz property of $\psi$ (this follows from the Lipschitz estimates for the measure-dependent flow established in Section~\ref{sec:flow-estimates} together with the uniform bounds on $D_\mu\psi$ from Theorem~\ref{thm:spatial-regularity}). Therefore, we obtain the bound
\begin{align*}
    K(m,n) \le C\bigg(d_F(\theta,\iota)+\frac{d_F^2(\theta,\iota)}{\e} \bigg),
\end{align*}
where $\theta=(\mu,m)$ and $\iota=(\nu,n)$. Together with the estimate for $K(m,\mu_m,\nu_m)$, this verifies Assumption~\ref{ass:comparison} (ii) and completes the proof.
\endproof

\subsection{Mean field control with common noise}\label{sec:appli_mfc}
As another application, the same method gives the viscosity characterization of a stochastic mean field control problem with common noise, where the common-noise coefficient depends on the conditional law.
This characterization is also relevant for particle approximation questions. In the state-independent common-noise setting, \cite{BayraktarEkrenZhang2025Particle} obtains convergence rates for finite-particle approximations of second-order PDEs on Wasserstein space using comparison principles; related finite-dimensional approximation, master-equation, and convergence-rate results include~\cite{Talbi2024FiniteDimensional,CecchinDaudinJacksonMartini2024,BayraktarEkrenZhou2025CBO}. The comparison framework developed here suggests how such questions may be revisited for state- and law-dependent common-noise directions.

Let $B,W$ be two independent Brownian motions on a filtered probability space $(\Omega, \mathcal{F},\mathbb P)$, and let $A \subset \R$ be a compact control set. Consider coefficients $b, \sigma:\R \x \Pc(\R)\times A \to \R$, $\sigma_0:\R \x \Pc_2(\R) \to \R$.
We consider the McKean--Vlasov SDE
\begin{align*}
dX^{t,\mu,\alpha}_s&=b(X^{t,\mu,\alpha}_s,m^{t,\mu,\alpha}_s,\alpha_s) \, ds + \sigma(X^{t,\mu,\alpha}_s,m^{t,\mu,\alpha}_s,\alpha_s) \, dB_s + \sigma_0(X^{t,\mu,\alpha}_s, m^{t,\mu,\alpha}_s) \, dW_s, \, \text{$t \leq s \leq T$}, \\
X_t^{t,\mu,\alpha}&=\xi,
~
m^{t,\mu,\alpha}_s := \Lc(X^{t,\mu,\alpha}_s \, |\,\Fc^W_s),
\end{align*}
where $\xi$ is independent of $B,W$ with distribution $\mu \in \Pc_2(\R)$ and $\alpha: \Omega \x [0,T] \rightarrow A$ is an admissible control adapted to the filtration $\F$ generated by $W,B,\xi$.

Take $\mathcal{A}:=\{\alpha=(\alpha_s)_{0\leq s \leq T}: \alpha : \Om \x [0,T] \longrightarrow A \text{ is measurable and $\F$-adapted} \}$ to be the set of all admissible controls. Given a running cost $r:\R \times \Pc(\R) \x A \to \R$, and a terminal cost $l: \R \to \R$, we define
\begin{align*}
    J(t,\mu, \alpha):=\E \left[\int_t^T r(X^{t,\mu,\alpha}_s, m^{t,\mu,\alpha}_s, \alpha_s) \, ds + l(X^{t,\mu,\alpha}_T)  \right].
\end{align*}
The optimization problem reads
\begin{align*}
    v(t,\mu)=\inf_{\alpha \in \mathcal{A}} J(t,\mu,\alpha).
\end{align*}
We define, for $(a,\mu,p,q,M) \in A  \times \Pc_2(\R) \times B_{q} \times B_{q} \times \R $,
\begin{align*}
    \bar K(a,\mu,p,q,M):=& \int  r(x,\mu,a)+ b(x,\mu,a) p(x) +\frac{1}{2} q(x) \sigma^2 (x,\mu,a) \, \mu(dx) +\frac{1}{2} M.
\end{align*}

\begin{proposition}[Viscosity solution]\label{thm:viscosity_property_common_noise}
    Under Assumptions~\ref{assump:regularity} and~\ref{assumption:application}, the value function is continuous and is the unique viscosity solution of the equation
    \begin{align*}
        -\pa_t v(t,\mu)=& \inf_{a \in A} \bar K(a, \mu, D_{\mu} v(t,\mu), D_{x \mu} v(t,\mu), \mathcal{H}_{\sigma_0}v(t,\mu)) \notag \\
        v(T,\mu)=&\mu(l).
    \end{align*}    
\end{proposition}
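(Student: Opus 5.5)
The plan is to follow the proof of Theorem~\ref{thm:viscosity_property} step by step, with one structural change. The common-noise coefficient is now $\sigma_0$ itself rather than $\sigma_0\tilde\sigma(a)$, so the coefficient of the second-order term in $M$ is the constant $\tfrac12$.

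\emph{Existence: continuity and viscosity property.} First I would establish continuity of $v$ through stability estimates for the conditional McKean--Vlasov system. Since $b,\sigma,\sigma_0,r,l$ are bounded and Lipschitz (Assumptions~\ref{assump:regularity} and~\ref{assumption:application}), a synchronous coupling of initial laws gives $|J(t,\mu,\alpha)-J(t',\mu',\alpha)|\le C(|t-t'|^{1/2}+\mathcal W_2(\mu,\mu'))$ uniformly in $\alpha$. Next I would prove the dynamic programming principle for $m^{t,\mu,\alpha}$ as a controlled $\mathcal P_2(\R)$-valued process. I would then test the viscosity property on $\bar v(t,\mu,m)=v(t,\psi(\cdot,\mu,m)_\sharp\mu)$, arguing as in Proposition~6.3 of \cite{DaJaSe23}. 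The key step is to realise the conditional law as $m_s=\psi(\cdot,\hat\mu_s,W_s-W_t)_\sharp\hat\mu_s$, where $\hat\mu_s$ solves the common-noise-free equation with drift shifted by $-\tfrac12\Gamma$. This is the Lamperti structure of the transform. With this representation, Itô's formula applied to a partial $C^2$-regular test function of $(s,\hat\mu_s,W_s-W_t)$ produces exactly $\partial_t\varphi+\bar K^e(a,\cdot,D_\mu\varphi,D_{x\mu}\varphi,\partial^2_{mm}\varphi)$. Here $\bar K^e$ is obtained from $\bar K$ through the substitutions $P$, $Q$ and $M-\int P\,\Gamma\,d\mu_m$ of Section~\ref{subsec:augmented-equation}. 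The sub- and supersolution inequalities then follow in the standard way from the dynamic programming principle and the local extremum property.

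\emph{Uniqueness via Theorem~\ref{thm:comparison}.} Here I would verify Assumption~\ref{ass:comparison} for $G^e=\inf_a\bar K^e$.

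For (i), Lipschitz continuity in $(p,q,M)$ follows exactly as in \eqref{eq:filtering1}. The estimate uses linearity of $P,Q$ in $(p,q)$, the bounds on $\partial_x\phi$, $\partial_x^2\phi$, $D_\mu\phi$, $D_{x\mu}\phi$ from Theorem~\ref{thm:spatial-regularity}, and moment control of $\mu_m$, which grows at most exponentially in $|m|$.

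For (ii), monotonicity in $M$ (with coefficient $\tfrac12\ge0$) together with $X\le -Y$ reduces the problem to the splitting $K_1+K_2$ of \eqref{eq: verify Assumption}. The drift $\tilde b$ becomes $b-\tfrac12\Gamma$.

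$K_2$ (the variation in $m$) is handled by the flow Lipschitz estimates of Theorem~\ref{thm:Lipschitz} and by \eqref{eq:estimate of Xt,Yt}, exactly as before.

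For $K_1$, I would again rewrite the terms involving $\nabla\kappa$ and $\nabla^2\kappa$ as $\int B\nabla\kappa\,d(\mu-\nu)+\int A\nabla^2\kappa\,d(\mu-\nu)$. Proposition~\ref{prop:commutator} then applies, with $A=\tfrac12\sigma^2(\partial_x\phi)^2\circ\psi$. Uniform ellipticity of $\sigma$ is needed here, and it survives the transform because $\partial_x\phi$ is bounded below. This positivity follows from the flow being a diffeomorphism with $\partial_x\psi\,\partial_x\phi\circ\psi=1$ and $|\partial_x\psi|\le Ce^{C|m|}$.

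\emph{Main obstacle.} I expect the main obstacle to be the existence part, not comparison. In the filtering problem the control was $\mathbb F^W$-adapted. Here it is adapted to $\mathbb F$ generated by $(W,B,\xi)$, so the state is not simply the conditional law of an open-loop controlled process. I would first reduce to feedback controls $a(x,\mu)$ using a mimicking or relaxation argument, as in \cite{cosso2021master,pham_bellman_2018}. Only after that reduction does the Hamiltonian take the pointwise form $\inf_a\int(\cdots)\,\mu(dx)$. I would then check that the Lamperti representation of the conditional law remains valid for such Markovian controls, so that Itô's formula along $\psi$ holds.
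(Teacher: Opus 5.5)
Your uniqueness half is what the paper intends; the paper gives no separate proof here beyond ``the same method''. That means $\tilde b=b-\tfrac12\Gamma$, the $K_1+K_2$ splitting, and Proposition~\ref{prop:commutator} applied for each fixed $a$. The gap is in how you resolve your ``main obstacle''. Reducing to feedback controls $a(x,\mu)$ cannot deliver the Hamiltonian $\inf_{a\in A}\bar K$ of the statement, whichever way one reads your phrase ``pointwise form''. When the action may depend on the particle's position, the dynamic programming and It\^o argument produces the infimum taken pointwise in $x$:
$\int\inf_{a\in A}\big[r+b\,p+\tfrac12\sigma^2q\big](x,\mu,a)\,\mu(dx)+\tfrac12M$. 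This is the closed-loop form, cf.\ \cite{pham_bellman_2018}, and it is not $\inf_{a\in A}\int[\cdots]\,\mu(dx)+\tfrac12M$.

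The two are genuinely different. Take $\sigma_0\equiv0$, $\sigma\equiv1$, $r\equiv0$, $b(x,\mu,a)=a$, $A=[-1,1]$. The first-order parts are then $-\int|D_\mu v|\,d\mu$ versus $-|\int D_\mu v\,d\mu|$. The corresponding value functions also differ: individual controls can drive two symmetric modes toward each other, while a common control cannot. So your existence step would show that $v$ solves a different equation from the one whose comparison you then verify.

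Nor can you move the uniqueness step over to the pointwise-infimum Hamiltonian. Your check of Assumption~\ref{ass:comparison}(ii) fixes $a$ before invoking Proposition~\ref{prop:commutator}. That proposition needs $A=\tfrac12\sigma^2(\partial_x\phi)^2\circ\psi\in W^{\lceil\lambda+3/2\rceil,\infty}$ and $B\in W^{\lceil\lambda\rceil,\infty}$. With a measurable minimizer $a^*(x)$, these coefficients are only measurable in $x$.

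The Hamiltonian $\inf_a\bar K$ in the statement is the one for controls adapted to the common-noise filtration $\mathbb F^W$, exactly as in Section~\ref{sec:appli}. That is the reading under which the proposition follows from ``the same method'', and under it your obstacle disappears. The conditional law is then that of an open-loop process driven by a control shared by all particles. The representation $m_s=\psi(\cdot,\hat\mu_s,m+W_s-W_t)_\sharp\hat\mu_s$ goes through as in the proof of Theorem~\ref{thm:viscosity_property} with $\tilde\sigma\equiv1$; the shift by $m$, justified by the flow property of $\psi$, is needed to test at a general augmented point. The argument of \cite{DaJaSe23} likewise carries over, and no mimicking or feedback reduction is needed. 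If you instead keep controls adapted to $(W,B,\xi)$, the correct target equation is the pointwise-infimum one, and the fixed-$a$ commutator argument does not give its comparison.
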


\section{Lamperti structure of the transform}\label{sec:lamperti}
We now explain the relationship between the change of variables used above and the classical Lamperti transform. This discussion is not needed for the proof of the comparison theorem, but it clarifies why the auxiliary flow removes the common-noise Hessian from the equation. Throughout this section, $\sigma_0$ satisfies Assumption~\ref{assump:regularity}.

\subsection{Equivalent flow formulations}\label{subsec:equivalent-flow-formulations}
The flow $\psi$ plays the central role in the discussion below. The next proposition identifies two equivalent formulations of this flow.
\begin{prop}[Equivalence]
    Consider the two equations
    \begin{align*}
    \pa_m\psi(x,\mu,m) &= \pa_x\psi(x,\mu,m)\sigma_0(x,\mu) + \int_\R D_\mu \psi(x,\mu,m)(y)\sigma_0(y,\mu)\mu(dy), \quad 
    \psi(x,\mu,0) = x, \\
    \pa_m \psi(x,\mu,m) &= \sigma_0(\psi(x,\mu,m),\mu_m), \quad 
    \mu_m = \psi(\cdot,\mu,m)_\sharp\mu,
    \quad \psi(x,\mu,0) = x.
\end{align*}
Under Assumption~\ref{assump:regularity}, both equations have unique solutions. Moreover, the solution $\psi$ is the same in the two formulations.
\end{prop}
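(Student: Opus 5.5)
The plan is to solve the McKean--Vlasov (characteristic) formulation first, and then show that its solution also solves the transport formulation. For the converse we use a flow/semigroup identity together with uniqueness for the transport equation along characteristics.

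\emph{Step 1: well-posedness of the characteristic equation.} Fix $\mu\in\Pc_2(\R)$. We view the second equation as an ODE in $m$ on the space of maps $x\mapsto\psi(x)$ with $\psi-\mathrm{id}$ bounded. By Assumption~\ref{assump:regularity}(i), $\sigma_0$ is bounded and Lipschitz in $x$. For two maps $\psi_1,\psi_2$ we have
\[
|(\psi_1)_\sharp\mu-(\psi_2)_\sharp\mu|_{-\lambda}\le C\,\mathcal W_1\big((\psi_1)_\sharp\mu,(\psi_2)_\sharp\mu\big)\le C\sup_x|\psi_1(x)-\psi_2(x)|,
\]
provided $\lambda>3/2$, so that $H^\lambda\hookrightarrow W^{1,\infty}$. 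Hence the vector field $\psi\mapsto\sigma_0(\psi(\cdot),\psi_\sharp\mu)$ is globally Lipschitz in the sup norm, and a Picard iteration gives a unique global solution $m\mapsto\psi(\cdot,\mu,m)$ on all of $\R$, for both positive and negative $m$.

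\emph{Step 2: the flow property.} Set $\mu_m=\psi(\cdot,\mu,m)_\sharp\mu$. By uniqueness in Step 1,
\[
\psi(\psi(x,\mu,h),\mu_h,m)=\psi(x,\mu,h+m),
\]
since both sides solve the same ODE in $m$ with the same data at $m=0$ (for the left side, the pushed measure is $\psi(\cdot,\mu_h,m)_\sharp\mu_h=\mu_{h+m}$). Differentiating in $h$ at $h=0$ gives
\[
\pa_m\psi(x,\mu,m)=\pa_x\psi(x,\mu,m)\,\sigma_0(x,\mu)+\frac{d}{dh}\Big|_{h=0}\psi(x,\mu_h,m).
\]
The last term equals $\int D_\mu\psi(x,\mu,m)(y)\,\sigma_0(y,\mu)\,\mu(dy)$, because $\mu_h=(\mathrm{id}+h\sigma_0(\cdot,\mu)+o(h))_\sharp\mu$ and by the chain rule for Lions derivatives. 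This shows that the characteristic solution solves the transport formulation. It requires $\psi$ to be $C^1$ in $x$ and Lions differentiable in $\mu$. We obtain this by differentiating the ODE: $\pa_x\psi$ solves a linear ODE, and $D_\mu\psi$ solves a linear McKean--Vlasov-type system involving $\pa_x\sigma_0$ and $D_\mu\sigma_0$. Assumption~\ref{assump:regularity}(i)--(ii) gives existence and continuity of these derivatives via Grönwall. These are the same linearized equations used for the estimates of Section~\ref{sec:flow-estimates}.

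\emph{Step 3: uniqueness for the transport formulation.} This is the main obstacle, since the equation is nonlocal in $\mu$. Let $\chi$ be any solution of the first equation in the regularity class of partially $C^1$ functions with bounded derivatives. Fix $(x,\mu)$ and consider
\[
g(h):=\chi\big(\psi(x,\mu,h),\mu_h,m-h\big),\qquad h\in[0,m].
\]
Differentiating $g$ with the chain rule (using Step 2 for $\tfrac{d}{dh}\mu_h$ along $\sigma_0(\cdot,\mu_h)$) gives
\[
g'(h)=\Big(\pa_x\chi\,\sigma_0(\psi,\mu_h)+\int D_\mu\chi(\cdot)(y)\,\sigma_0(y,\mu_h)\,\mu_h(dy)-\pa_m\chi\Big)\Big|_{(\psi(x,\mu,h),\mu_h,m-h)}=0,
\]
where the bracket vanishes by the transport equation evaluated at the point $(\psi(x,\mu,h),\mu_h)$. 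Therefore
\[
\chi(x,\mu,m)=g(0)=g(m)=\chi\big(\psi(x,\mu,m),\mu_m,0\big)=\psi(x,\mu,m).
\]
So $\chi=\psi$, which gives uniqueness for the transport formulation and the identification of the two solutions.

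The delicate points are two. First, justifying the chain rule for $h\mapsto\chi(\cdot,\mu_h,\cdot)$ requires joint continuity of $D_\mu\chi$. Second, uniqueness must be stated in that regularity class; I would build this class into the notion of solution.
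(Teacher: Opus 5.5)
Your proof is correct and follows essentially the paper's route: a Banach fixed-point construction of the McKean--Vlasov flow, differentiation of the flow identity $\psi(\psi(x,\mu,h),\mu_h,m)=\psi(x,\mu,h+m)$ at $h=0$, and constancy of any transport solution along characteristics. The one difference is that you take the already-constructed curve $\mu_h=\psi(\cdot,\mu,h)_\sharp\mu$ as the measure characteristic, which avoids the paper's detour through the nonlinear continuity equation and its uniqueness. Also, your restriction $\lambda>3/2$ in Step~1 can be dropped, since boundedness of $D_\mu\sigma_0$ already gives $|\sigma_0(x,(\psi_1)_\sharp\mu)-\sigma_0(x,(\psi_2)_\sharp\mu)|\le C\sup_y|\psi_1(y)-\psi_2(y)|$.
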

\begin{proof}
    $(\Longleftarrow)$
    Define an auxiliary system
    \begin{align*}
        dX_s = \sigma_0(X_s,\mu_s)ds,
        \quad
        \mu_s = \psi(\cdot,\mu,s)_\sharp\mu
        \quad
        X_0 = x,~ \mu_0 = \mu.
    \end{align*}
    Existence and uniqueness follow from a standard Banach fixed point argument. The flow property of the auxiliary system then implies that
    \begin{align*}
        \psi(x,\mu,s+t) = \psi(\psi(x,\mu,s),\mu_s,t).
    \end{align*}
    Differentiating at $s = 0$ yields the first system.

    $(\Longrightarrow)$
    Consider an auxiliary system
    \begin{align*}
        dX_s = \sigma_0(X_s,\mu_s)ds,
        \quad
        \pa_s\mu_s(\cdot) + \pa_x(\sigma_0(\cdot,\mu_s)\mu_s(\cdot)) ~ = ~ 0,
        \quad
        X_0 = x,~ \mu_0 = \mu.
    \end{align*}
    Then the first system implies that
    \begin{align*}
        \frac{d}{ds}\psi(X_s,\mu_s,m-s) = 0.
    \end{align*}
    Thus, we have $\psi(x,\mu,m) = \psi(X_s,\mu_s,m-s)$ for all $s$.
    Taking $s = m$ gives
    \begin{align*}
        \psi(x,\mu,m) = X_m, 
        \quad
        \pa_m\psi(x,\mu,m) = \sigma_0(X_m,\mu_m) = \sigma_0(\psi(x,\mu,m),\mu_m).
    \end{align*}
    It remains to prove that
    $
        \mu_m = \psi(\cdot,\mu,m)_\sharp\mu
    $
    uniquely solves 
    $
    \pa_s\mu_s(\cdot) + \pa_x(\sigma_0(\cdot,\mu_s)\mu_s(\cdot)) ~ = ~ 0.
    $
    For any $\varphi \in C_c^\infty$,
    \begin{align*}
        \langle \varphi, \mu_m \rangle
        ~ = ~
        \int \varphi(\psi(x,\mu,m))\mu(dx),
    \end{align*}
    Differentiating with respect to $m$, it follows that
    \begin{align*}
        \frac{d}{dm}\langle \varphi, \mu_m \rangle
        ~ &= ~
        \int \varphi'(\psi(x,\mu,m))\sigma_0(\psi(x,\mu,m),\mu_m)\mu(dx)
        ~ = ~
        \int \varphi'(x)\sigma_0(x,\mu_m)\mu_m(dx)
        \\~ &= ~
        -\langle \varphi,\pa_x(\sigma_0(x,\mu_m)\mu_m(x))\rangle.
    \end{align*}
    To complete the proof, we verify the uniqueness of the solution to the Fokker-Planck equation. Let $\mu_s$ be any solution of $\pa_s\mu_s + \pa_x(\sigma_0(\cdot,\mu_s)\mu_s) = 0$ with $\mu_0 = \mu$. Define the auxiliary flow $\Psi$ by $\pa_m\Psi(x,\mu,m) = \sigma_0(\Psi(x,\mu,m),\mu_m)$ with $\Psi(x,\mu,0) = x$, whose existence follows from the first part of the proof. Set $\tilde{\mu}_s := \Psi(\cdot,\mu,s)_\sharp\mu$. A direct computation shows that $\tilde{\mu}_s$ also satisfies $\pa_s\tilde{\mu}_s + \pa_x(\sigma_0(\cdot,\mu_s)\tilde{\mu}_s) = 0$. By uniqueness for this linear PDE (given the Lipschitz velocity field $\sigma_0(\cdot,\mu_s)$), we deduce $\mu_s = \tilde{\mu}_s = \Psi(\cdot,\mu,s)_\sharp\mu$. The uniqueness of $\psi$ then yields $\Psi = \psi$ and consequently $\mu_s = \psi(\cdot,\mu,s)_\sharp\mu$, as desired.
\end{proof}
\subsection{Relation with the classical Lamperti transform}\label{subsec:classical-lamperti}
The function $\psi$ is related to the classical Lamperti transform when $\sigma_0$ is independent of the measure variable, i.e., $\sigma_0(x,\mu) = \sigma_0(x)$ for all $(x,\mu) \in \R \x \Pc_2(\R)$.
In this special case, $\psi$ solves
\begin{align*}
    \pa_m\psi(x,m) = \pa_x\psi(x,m)\sigma_0(x),\quad \psi(x,0) = x.
\end{align*}
If, in addition, $\sigma_0$ is bounded away from zero, then the Lamperti transform associated with $\sigma_0$ is $\Psi : \R \longrightarrow\R$ given by
\begin{align*}
    \Psi(x) := \int_0^x \frac{1}{\sigma_0(y)}dy.
\end{align*}
Then $\psi(x,m) = \Psi^{-1}(\Psi(x)+m)$. 

The present transform has two advantages over the classical Lamperti transform. First, it does not require $\sigma_0$ to be bounded away from zero. Second, it allows the transformation itself to depend on the law. The classical Lamperti transform normalizes a one-dimensional diffusion coefficient; the transform used here removes the common-noise volatility direction from the conditional law dynamics. Thus, in the mean field setting, it reduces dynamics with common noise to transformed dynamics without an explicit common-noise martingale term.

Consider, for example, the mean field control problem with common noise
\begin{align*}
    dX_t = b(t,X_t,\mu_t,\alpha_t)dt + \sigma(t,X_t,\mu_t,\alpha_t)dW_t + \sigma_0(t,X_t,\mu_t)dB_t,
\end{align*}
where $\alpha = (\alpha_t)_{t \in [0,T]}$ is the control, $B,W$ are two independent Brownian motions,
$\mu_t := \Lc(X_t|B)$.
Consider two functions
\[
\psi,\phi:[0,T]\times\mathbb R\times\mathcal P_2(\mathbb R)\times\mathbb R
\longrightarrow \mathbb R,
\]
which are the unique solutions to the following equation
\[
\left\{
\begin{aligned}
\partial_m\psi(t,x,\mu,m)
&=
\partial_x\psi(t,x,\mu,m)\sigma_0(t,x,\mu)
+
\int_{\mathbb R}
D_\mu\psi(t,x,\mu,m)(y)\sigma_0(t,y,\mu)\mu(dy),
\\
\psi(t,x,\mu,0)&=x.
\end{aligned}
\right.
\]
For each fixed \(t,m,\mu\), \(\psi(t,\cdot,\mu,m)\) is a diffeomorphism of
\(\mathbb R\) to \(\mathbb R\). Denote
\[
\Phi_{t,m}(\mu):=\psi(t,\cdot,\mu,m)_{\#}\mu .
\]

Then the dynamics we study is
\[
\widetilde X_t=\psi(t,X_t,\mu_t,-B_t)
\]
instead of \(X\) itself. By Itô's formula on
\(\mathbb R\times\mathcal P_2(\mathbb R)\times\mathbb R\), one has that
\[
\begin{aligned}
d\widetilde X_t
={}&
\Big[
\mathcal L^{\alpha_t}_{t,\mu_t,-B_t}\psi
\Big](X_t)dt
+
\partial_x\psi(t,X_t,\mu_t,-B_t)
\sigma(t,X_t,\mu_t,\alpha_t)dW_t
\\
&+
\Bigg[
\partial_x\psi(t,X_t,\mu_t,-B_t)\sigma_0(t,X_t,\mu_t)
+
\int_{\mathbb R}
D_\mu\psi(t,X_t,\mu_t,-B_t)(y)
\sigma_0(t,y,\mu_t)\mu_t(dy)
\\
&\qquad\qquad
-
\partial_m\psi(t,X_t,\mu_t,-B_t)
\Bigg]dB_t +
\frac12
\Big[
\mathcal R^0_{t,\mu_t,-B_t}\psi
\Big](X_t)dt
\\
={}&
\Big[
\widehat{\mathcal L}^{\alpha_t}_{t,\mu_t,-B_t}\psi
\Big](X_t)dt
+
\partial_x\psi(t,X_t,\mu_t,-B_t)
\sigma(t,X_t,\mu_t,\alpha_t)dW_t .
\end{aligned}
\]
Here the \(dB_t\)-term vanishes by the defining equation of \(\psi\), and
\[
\begin{aligned}
\Big[
\mathcal L^a_{t,\mu,m}\psi
\Big](x)
:={}&
\partial_t\psi(t,x,\mu,m)
+
\partial_x\psi(t,x,\mu,m)b(t,x,\mu,a)
+
\frac12\partial^2_{xx}\psi(t,x,\mu,m)\sigma^2(t,x,\mu,a)
\\
&+
\int_{\mathbb R}
D_\mu\psi(t,x,\mu,m)(y)b(t,y,\mu,a)\mu(dy)
\\
&+
\frac12
\int_{\mathbb R}
\partial_yD_\mu\psi(t,x,\mu,m)(y)
\sigma^2(t,y,\mu,a)\mu(dy).
\end{aligned}
\]
The remaining common-noise Itô correction is
\[
\Big[
\mathcal R^0_{t,\mu,m}\psi
\Big](x)
=
-
\partial_x\psi(t,x,\mu,m)\mathfrak c_0(t,x,\mu)
-
\int_{\mathbb R}
D_\mu\psi(t,x,\mu,m)(y)\mathfrak c_0(t,y,\mu)\mu(dy),
\]
where
\[
\mathfrak c_0(t,x,\mu)
:=
\partial_x\sigma_0(t,x,\mu)\sigma_0(t,x,\mu)
+
\int_{\mathbb R}
D_\mu\sigma_0(t,x,\mu)(y)\sigma_0(t,y,\mu)\mu(dy).
\]
Therefore,
\[
\begin{aligned}
\Big[
\widehat{\mathcal L}^a_{t,\mu,m}\psi
\Big](x)
:={}&
\Big[
\mathcal L^a_{t,\mu,m}\psi
\Big](x)-
\frac12
\partial_x\psi(t,x,\mu,m)\mathfrak c_0(t,x,\mu)
-
\frac12
\int_{\mathbb R}
D_\mu\psi(t,x,\mu,m)(y)\mathfrak c_0(t,y,\mu)\mu(dy).
\end{aligned}
\]

Moreover,
\[
\widetilde\mu_t:=\mathcal L(\widetilde X_t| B)
=
\Phi_{t,-B_t}(\mu_t),
\qquad
\mu_t=
\Phi_{t,B_t}(\widetilde\mu_t).
\]
In other words, the dynamics of \(\widetilde X\) has no common noise volatility term and follows
\[
d\widetilde X_t
=
\widetilde b(t,\widetilde X_t,\widetilde\mu_t,B_t,\alpha_t)dt
+
\widetilde\sigma(t,\widetilde X_t,\widetilde\mu_t,B_t,\alpha_t)dW_t,
\]
where \(\widetilde\mu_t:=\mathcal L(\widetilde X_t| B)\), and
\[
\begin{aligned}
\widetilde b(t,x,\mu,m,a)
&:=
\Big[
\widehat{\mathcal L}^{a}_{t,\Phi_{t,m}(\mu),-m}\psi
\Big](\psi(t,x,\mu,m)),
\\
\widetilde\sigma(t,x,\mu,m,a)
&:=
\partial_x\psi(t,\psi(t,x,\mu,m),\Phi_{t,m}(\mu),-m)
\sigma(t,x^{m,\mu},\Phi_{t,m}(\mu),a).
\end{aligned}
\]

Note that \(\widetilde\mu,\mu,X,\widetilde X\) have the following relationship
\[
\widetilde\mu_t=\Phi_{t,-B_t}(\mu_t),
\qquad
\mu_t=\Phi_{t,B_t}(\widetilde\mu_t),
\]
\[
\widetilde X_t=\psi(t,X_t,\mu_t,-B_t),
\qquad
X_t=\psi(t,\widetilde X_t,\widetilde\mu_t,B_t).
\]

\section{Finite-measure equations from nonlinear filtering}\label{sec:finite-measure}
The same idea also applies to PDEs on spaces of finite positive measures. This is important for filtering, because the unnormalized conditional law in the Zakai equation is naturally finite-measure valued, while the normalized conditional law in the Kushner--Stratonovich equation is probability-measure valued. Martini~\cite{Martini2023,Martini2024} studies precisely these Kolmogorov equations on spaces of measures associated with nonlinear filtering. The transformations below show how such equations fit into the same change-of-variable framework used in the preceding sections. In particular, they indicate how the Crandall--Ishii viscosity comparison method developed here can be extended beyond probability measures to the finite-measure PDEs generated by filtering, a class that appears to be outside the scope of the existing Wasserstein comparison literature.
\subsection{Equations with linear functional derivatives}\label{subsec:finite-linear-functional}
Let $\Mc_2(\R^d)$ denote the space of finite positive measures with finite second moment, and consider the following second-order PDE motivated by the Zakai equation:
\begin{align}\label{eq:Zakai_simple}
    -\left(\pa_t u+G(\cdot, \delta_\mu u,\Hc_1 u)\right)(t,\mu)=0,
    ~(t,\mu) \in [0,T) \x \Mc_2(\R),
\end{align}
where for any $v : \Mc_2(\R^d) \longrightarrow \R$ regular enough,
\begin{align*}
    \Hc_1 v(\mu) := \int_{\R^d \x \R^d} \delta^2_{\mu\mu}v(\mu,x,y)h^T(x)h(y)\mu(dx)\mu(dy).
\end{align*}
Setting $\Lc^m\mu := e^{mh(\cdot)}\mu$, one can transform \eqref{eq:Zakai_simple} into an augmented PDE and prove a comparison principle for standard viscosity solutions under suitable structural conditions.

\subsection{Equations with Lions and linear functional derivatives}\label{subsec:finite-lions-linear}
Let $\Mc_2(\R^d)$ denote the space of finite positive measures with finite second moment, and consider the following second-order PDE motivated by the Zakai equation:
\begin{align}\label{eq:Zakai}
    -\left(\pa_t u+G(\cdot, \delta_\mu u, D_\mu u,D_{x\mu}u,\Hc_2 u)\right)(t,\mu)=0,
    ~(t,\mu) \in [0,T) \x \Mc_2(\R),
\end{align}
where for any $v : \Mc_2(\R) \longrightarrow \R$ regular enough,
\begin{align*}
    \Hc_2 v(\mu) := 
    \begin{bmatrix}
        \int_{\R \x \R} \delta^2_{\mu\mu}v(\mu,x,y)\mu(dx)\mu(dy) 
        &
        \int_{\R \x \R}\sigma^\top_0(x)\pa_x\delta^2_{\mu\mu}u(\mu,x,y)\mu(dx)\mu(dy) 
        \\
        \int_{\R \x \R}\pa_y\delta^2_{\mu\mu}u(\mu,x,y)\sigma_0(y)\mu(dx)\mu(dy) 
        &
        \Hc u 
    \end{bmatrix}.
\end{align*}
Setting $\Lc^{m,n}\mu := e^{n}(\psi(\cdot,m)_\sharp\mu)$, where $\psi$ solves
\begin{align*}
    \pa_m\psi(x,m) = \pa_x\psi(x,m)\sigma_0(x), \quad \psi(x,0) = x,
\end{align*}
one obtains an augmented PDE for which the same comparison strategy can be applied. In particular, this transformation covers the mixed finite-measure and probability-measure structures that appear in filtering equations of Zakai and Kushner--Stratonovich type.

\acknowledgement{We thank Jianfeng Zhang at USC for fruitful discussions.}

\appendix

\section{A Sobolev commutator estimate}\label{sec:commutator}

The class of Schwartz functions  $\mathcal{S}(\mathbb R)$ is the space of smooth functions whose derivatives are bounded by $C_N(1+|\xi|^2)^{-N}$ for every $N \in \mathbb Z^+$, equipped with the topology induced by the family of seminorms
\begin{align*}
    \rho_{\alpha,\beta}(\phi)=\sup_{\xi \in \mathbb R}  \left|\xi^{\alpha} \partial_{\beta}\phi(\xi) \right|, \quad \forall \, \phi \in \mathcal{S}({\mathbb R}),
\end{align*}
indexed by all multi-indices $\alpha,\beta$. Denote by $\mathcal{S}'(\mathbb R)$ the topological dual of $\mathcal{S}(\mathbb R)$.

\begin{definition}\label{def:Sobolev}
    Let $s$ be a real number. The space $H^s(\mathbb R)$ is defined as the set of all distributions $u$ in $\mathcal{S}'(\mathbb R)$ with the property that
  \begin{align}
        \mathcal{J}_{-s} (u):= (I_d - \Delta)^{s/2} u = \mathcal{F}^{-1} ((1+|\xi|^2)^{s/2} \mathcal{F}u(\cdot) )
    \end{align}
    is an element of $L^2(\mathbb R)$. Here, $\mathcal{F}$ denotes the Fourier transform. $\Jc_{-s}$ is called the Bessel potential operator, and $|u|_s:=|\Jc_{-s} u |_{L^2}=|(1+|\xi|^2)^{s/2} \mathcal{F}u(\cdot)|_{L^2} $.
\end{definition}
These are the standard Bessel potential conventions; see, for example,~\cite{AdamsHedberg1996,Grafakos2014}. The commutator estimate below is the same type of Sobolev estimate that appears in the HJB theory for controlled Duncan--Mortensen--Zakai equations~\cite{GozziSwiech2000}.

For given functions $a \in W^{\lceil\lambda + 3/2\rceil,\infty}(\R),c \in W^{\lceil\lambda\rceil,\infty}(\R)$, define the operators
\begin{align*}
\mathcal{B}f(x)&=c(x)D_xf(x),\,\mathcal{A}f(x)=\frac{1}{2}a(x)D^2_x f(x).\\
\end{align*}

\begin{proposition}\label{prop:commutator}
Assume that $\xi^{\top} a(x)\xi \geq \delta |\xi|^2$ for all $x,\xi \in \R$.
    For any finite signed measure $\eta$, we have
    \begin{align}\label{eq:boundf}
        \int_{\R} \left( \Ac(\Jc_{2\lambda}\eta)(x) + \Bc(\Jc_{2\lambda}\eta)(x)\right)\eta(dx) \leq -\frac{\d}{4} |\eta|_{1-\lambda}^2+c|\eta|_{-\lambda}^2
    \end{align}
    for a strictly positive constant $c$.
\end{proposition}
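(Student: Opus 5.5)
Set $f := \Jc_{2\lambda}\eta = (I_d-\Delta)^{-\lambda}\eta$. For a finite signed measure $\eta$ we have $f\in H^{2\lambda}$, and $f$ is smooth whenever $\lambda$ is large enough for the pairings to make sense; otherwise we mollify $\eta$ and pass to the limit at the end. The plan is to rewrite both integrals as $L^2$ pairings at the level $g := \Jc_\lambda\eta$. Note that $|g|_{L^2}=|\eta|_{-\lambda}$, $|\partial_x g|_{L^2}\le|\eta|_{1-\lambda}$, and $f=\Jc_\lambda g$. Since $\Jc_{2\lambda}$ is self-adjoint,
\[
\int_\R \Ac f\,d\eta=\langle \Jc_{-\lambda}(\Ac\Jc_\lambda g),\, g\rangle_{L^2}, \qquad \int_\R \Bc f\,d\eta=\langle \Jc_{-\lambda}(\Bc\Jc_\lambda g),\, g\rangle_{L^2}.
\]
So the whole task is to compare the conjugated operators $\Jc_{-\lambda}\Ac\Jc_\lambda$ and $\Jc_{-\lambda}\Bc\Jc_\lambda$ with $\Ac$ and $\Bc$, up to commutators of lower order.

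\emph{Principal term.} We write $\Jc_{-\lambda}(a\,\partial_x^2\Jc_\lambda g)=a\,\partial_x^2 g+[\Jc_{-\lambda},a]\,\partial_x^2\Jc_\lambda g$. Integrating by parts gives
\[
\tfrac12\langle a\,\partial_x^2 g,g\rangle=-\tfrac12\int a|\partial_x g|^2-\tfrac12\int a'\,g\,\partial_x g\le-\tfrac{\delta}{2}|\partial_x g|^2_{L^2}+C|g|_{L^2}|\partial_x g|_{L^2}.
\]
For the commutator, we use the Kato--Ponce/Calderón commutator estimate (or the symbolic calculus for the order-$\lambda$ pseudodifferential operator $\Jc_{-\lambda}$ with a $W^{\lceil\lambda+3/2\rceil,\infty}$ multiplier):
\[
\|[\Jc_{-\lambda},a]\,h\|_{L^2}\le C\|a\|_{W^{\lceil\lambda\rceil+1,\infty}}\,|h|_{\lambda-1}.
\]
Applying it with $h=\partial_x^2\Jc_\lambda g$, for which $|h|_{\lambda-1}\le C|g|_{1}$, gives a bound $C|g|_1|g|_{L^2}$. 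The $g$ side carries only $L^2$, and the derivatives are split as one derivative against zero. This gives $C(|\partial_x g|+|g|)|g|$.

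\emph{Drift term.} In the same way, $\Jc_{-\lambda}(c\,\partial_x\Jc_\lambda g)=c\,\partial_x g+[\Jc_{-\lambda},c]\,\partial_x\Jc_\lambda g$. Integrating by parts, $\langle c\,\partial_x g,g\rangle=-\tfrac12\int c'g^2\le C|g|^2$. The commutator is bounded by $C\|c\|_{W^{\lceil\lambda\rceil,\infty}}|g|_{L^2}^2$, since $[\Jc_{-\lambda},c]$ has order $\lambda-1$ and it acts on an object of order $1-\lambda$ relative to $g$. This step is where the regularity $c\in W^{\lceil\lambda\rceil,\infty}$ is used.

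\emph{Conclusion.} Collecting the bounds, the left side of \eqref{eq:boundf} is at most $-\tfrac\delta2|\partial_x g|^2+C|g|(|\partial_x g|+|g|)$. Young's inequality gives $C|g||\partial_x g|\le\tfrac\delta4|\partial_x g|^2+C'|g|^2$. We then use $|\eta|^2_{1-\lambda}=|\partial_x g|^2+|g|^2$, absorbing the extra $\tfrac\delta4|g|^2$ into the constant $c$, and obtain $-\tfrac\delta4|\eta|^2_{1-\lambda}+c|\eta|^2_{-\lambda}$. If $|\eta|_{1-\lambda}=\infty$, the inequality is trivial or should be read in the extended sense; the mollification handles this case.

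The main obstacle is the commutator bound for $[\Jc_{-\lambda},a]$ with only finite (Sobolev–$L^\infty$) smoothness of $a$ and non-integer $\lambda$. The first approach is to write $\Jc_{-\lambda}=\Jc_{-\lambda+\lceil\lambda\rceil}\,(I_d-\Delta)^{-\lceil\lambda\rceil/2}$-type factorizations. Alternatively, we apply the Kato–Ponce estimate $\|\Jc_{-s}(ah)-a\Jc_{-s}h\|_{L^2}\le C(\|a'\|_{L^\infty}|h|_{s-1}+|a|_{W^{s,\infty}}|h|_{L^2})$ with $s=\lambda$, and then interpolate. The margin of $3/2$ extra derivatives on $a$ is what makes this work in 1D, since it gives $a\in H^{s}_{loc}$ multipliers via Sobolev embedding.
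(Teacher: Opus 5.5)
The paper gives no proof of this proposition. It is stated in Appendix~\ref{sec:commutator} with only a pointer to the commutator estimates of Gozzi and {\'S}wi{\k e}ch~\cite{GozziSwiech2000}, so there is no in-paper argument to compare yours with. Judged on its own, your argument is the standard one for estimates of this type, and it is correct. With $g=\Jc_\lambda\eta$ you have $|\eta|_{1-\lambda}^2=\|g\|_{L^2}^2+\|\partial_x g\|_{L^2}^2$. Ellipticity plus one integration by parts gives $-\frac{\delta}{2}\|\partial_x g\|_{L^2}^2+C\|g\|_{L^2}\|\partial_x g\|_{L^2}$. Your order count for the commutators is right:
\begin{itemize}
\item $[\Jc_{-\lambda},a]$ has order $\lambda-1$ and acts on $\partial_x^2\Jc_\lambda g$, which has order $2-\lambda$ relative to $g$, so it costs $\|g\|_{H^1}\|g\|_{L^2}$;
\item acting on $\partial_x\Jc_\lambda g$, the commutator with $c$ costs only $\|g\|_{L^2}^2$.
\end{itemize}
Young's inequality then closes with exactly $-\frac{\delta}{4}|\eta|_{1-\lambda}^2+c|\eta|_{-\lambda}^2$.

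A few points to tidy:
\begin{itemize}
\item $f=\Jc_{2\lambda}\eta$ is not in $H^{2\lambda}$ unless $\eta\in L^2$; for a Dirac mass, $f\in H^s$ only for $s<2\lambda-\frac12$. This is harmless.
\item More importantly, state the standing assumption $\lambda>\frac32$. It is implicit in the paper, since the penalization needs $\nabla^2\kappa$ bounded. This condition makes $\Jc_{2\lambda}\eta\in C^2_b$, so the left side is an honest integral against $\eta$. It also gives $|\eta|_{1-\lambda}\le C_\lambda\|\eta\|_{TV}<\infty$, so $g\in H^1$ and every pairing you write is a legitimate $H^{-1}$--$H^1$ duality.
\item Under $\lambda>\frac32$ the mollification limit is routine: $af_n''\to af''$ uniformly, $\eta_n\to\eta$ narrowly with bounded total variation, and $|\eta_n|_s\to|\eta|_s$ by dominated convergence.
\item Your remark that the case $|\eta|_{1-\lambda}=\infty$ is ``trivial'' has it backwards: a right-hand side of $-\infty$ is the strongest possible claim. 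Under $\lambda>\frac32$ that case never occurs, so the remark should simply be dropped.
\item The commutator bounds are the only substantive input and need a precise statement rather than ``then interpolate''. One clean route using integer-order $W^{k,\infty}$ regularity is to factor $\Jc_{-\lambda}=(1-\partial_x^2)^j\Jc_{-\theta}$, with $j\ge 0$ an integer and $\theta\in[0,2)$. The commutator of $(1-\partial_x^2)^j$ with a multiplier is a differential operator of order $2j-1$. Its coefficients involve at most $2j\le\lceil\lambda\rceil$ derivatives of the multiplier. After a Leibniz expansion, what remains are commutators with the low-order operator $\Jc_{-\theta}$, which are of Calder\'on/Kato--Ponce type.
\end{itemize}
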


\section{Flow estimates for the nonlinear transform}\label{sec:flow-estimates}
This section records the key estimates related to the change-of-variable functions $\psi$ and $\phi$.

\begin{theorem}\label{thm:spatial-regularity}
Suppose Assumption~\ref{assump:regularity} holds. Then for each $m \in [0,T]$ and each probability measure $\mu$, the backward flow $\phi(\cdot, \mu, m)$ and its Lions derivative $D_\mu \phi$ satisfy
\begin{equation}\label{eq:spatial-bound}
\begin{split}
\sup_{\mu \in \Pc_2(\R),y \in \R}\bigg(\sum_{i = 1}^{k+1}\norm{\partial_x^i \phi(\cdot, \mu,m)}_{L^\infty} 
+ \sum_{i = 1}^{k}\Big(\norm{\pa_x^iD_{y\mu} \phi(\cdot, \mu, m)(y)}_{L^\infty} +\norm{\pa_x^iD_{\mu} \phi(\cdot, \mu, m)(y)}_{L^\infty}\Big)\bigg)
\\ \leq C\,e^{C|m|}.
\end{split}
\end{equation}
The same result holds for the forward flow $\psi$.
\end{theorem}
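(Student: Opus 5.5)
We will work with the McKean--Vlasov characteristic formulation of the flow, $\pa_m\psi(x,\mu,m)=\sigma_0(\psi(x,\mu,m),\mu_m)$ with $\mu_m=\psi(\cdot,\mu,m)_\sharp\mu$, rather than with the transport equation \eqref{eq:forward-flow}. The equivalence proposition of Section~\ref{subsec:equivalent-flow-formulations} allows this. Since $\phi(\cdot,\mu,m)=\psi(\cdot,\mu,-m)$ solves the same ODE with $\sigma_0$ replaced by $-\sigma_0$ (a coefficient satisfying the same assumptions), it suffices to prove the bound for $\psi$ with $m$ of either sign. Fix $\mu$. The measure curve $m\mapsto\mu_m$ is determined independently of $x$, so for fixed $\mu$ the map $x\mapsto\psi(x,\mu,m)$ is the flow of the non-autonomous ODE $\dot X=\sigma_0(X,\mu_m)$.

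\emph{Step 1 (spatial derivatives).} Differentiate the ODE in $x$. The first derivative satisfies $\pa_m\pa_x\psi=\pa_x\sigma_0(\psi,\mu_m)\pa_x\psi$ with $\pa_x\psi(x,\mu,0)=1$, so Gronwall gives $|\pa_x\psi|\le e^{\|\pa_x\sigma_0\|_\infty|m|}$. For $2\le i\le k+1$ we use the Fa\`a di Bruno formula:
\[
\pa_m\pa_x^i\psi=\pa_x\sigma_0(\psi,\mu_m)\pa_x^i\psi+R_i,
\]
where $R_i$ is a polynomial in $\pa_x^j\psi$ for $j<i$, with coefficients $\pa_x^l\sigma_0$ for $l\le i\le k+1$. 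These coefficients are bounded by Assumption~\ref{assump:regularity}(ii). Induction and Gronwall then give $\|\pa_x^i\psi\|_\infty\le C e^{C|m|}$, with constants uniform in $\mu$.

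\emph{Step 2 (Lions derivative).} Differentiate the ODE in $\mu$ at a point $y$. This is the standard linearization for McKean--Vlasov flows:
\[
\pa_m D_\mu\psi(x,\mu,m)(y)=\pa_x\sigma_0(\psi,\mu_m)D_\mu\psi(x)(y)+\int D_\mu\sigma_0(\psi,\mu_m)(\psi(z))\big[\pa_x\psi(z)\mathbf 1_{z=y}+D_\mu\psi(z)(y)\big]\mu(dz).
\]
More precisely, the chain rule for $\mu_m=\psi(\cdot,\mu,m)_\sharp\mu$ produces the term $D_\mu\sigma_0(\psi(x),\mu_m)(\psi(y))\pa_x\psi(y)$ plus the integral against $D_\mu\psi(z)(y)$. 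Set $N(m):=\sup_{x,y}|D_\mu\psi(x,\mu,m)(y)|$. Boundedness of $\pa_x\sigma_0$ and $D_\mu\sigma_0$, together with Step 1, gives $N'\le C N+Ce^{C|m|}$, hence $N\le Ce^{C|m|}$. Making this differentiation rigorous is where we would first prove existence of $D_\mu\psi$. We would do this by solving the linear equation above and checking that it is the derivative, via the Lipschitz estimates in $\mu$ from Assumption~\ref{assump:regularity}(i); the standard Lions-lift argument on $L^2$ applies.

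\emph{Step 3 (mixed derivatives $\pa_x^iD_\mu\psi$, $\pa_x^iD_{y\mu}\psi$).} Differentiate the linear equation of Step 2 in $x$ up to order $k$. Since $y$ and $z$ enter only through the inhomogeneous term, $\pa_x^iD_\mu\psi$ solves a linear ODE with homogeneous part $\pa_x\sigma_0\,\pa_x^iD_\mu\psi$. The forcing consists of products of $\pa_x^l\sigma_0(\psi)$, $\pa_x^lD_\mu\sigma_0(\psi)(\cdot)$ (with $l\le k+1$) and the lower-order derivatives already bounded. Gronwall again gives $Ce^{C|m|}$. For $D_{y\mu}\psi$ we differentiate in $y$ instead. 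The forcing term then involves $D_{y\mu}\sigma_0$, $\pa_x^2\psi(y)$ and $\pa_yD_\mu\psi(z)(y)$, and all of these are controlled by Assumption~\ref{assump:regularity} and Step 1. Taking further $x$-derivatives proceeds exactly as before.

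The main obstacle is Step 2–3's rigorous justification. Existence and continuity of $D_\mu\psi$, and of its $y$-derivative, require differentiating the pushforward $\mu_m$ in $\mu$. This is a nonlocal fixed point problem. The regularity of $D_\mu\sigma_0$ in its measure argument is needed to close it, and we only have Lipschitz control in $|\cdot|_{-\lambda}$. We would first handle this by a Picard iteration in the space of functions $(x,y)\mapsto D_\mu\psi$ with weighted sup norm $e^{-C|m|}$, and show that the difference quotients converge. Once existence is settled, the bounds themselves are routine Gronwall estimates.
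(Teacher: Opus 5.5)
Your proposal is correct and follows essentially the same route as the paper: you work with the characteristic ODE $\partial_m\phi=-\sigma_0(\phi,\tilde\mu_m)$ (equivalently $\psi$ with $m\mapsto -m$), bound $\partial_x\phi$ by the exponential formula and the higher $x$-derivatives by a Fa\`a di Bruno induction plus Gronwall, then close the linearized nonlocal equation for $D_\mu\phi(x,\mu,m)(y)$ by a Gronwall argument on $\sup_{x,y}$, and handle the mixed derivatives the same way. The paper differentiates only formally and dismisses the higher-order and $D_{y\mu}$ cases as ``similar,'' so your extra detail in Step 3 and the existence argument you flag for $D_\mu\phi$ go beyond the paper's proof rather than departing from it.
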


\begin{proof}[Proof of Theorem \ref{thm:spatial-regularity}]
We prove the estimate for the backward flow \(\phi\). The proof for the
forward flow \(\psi\) is identical, with the sign of the vector field
changed.

Throughout the proof, \(C\) denotes a constant depending only on the
constants in Assumption \(2.1\), on \(k\), and on \(T\). Its value may
change from line to line.

\textbf{First we estimate the spatial derivatives.} Differentiating
\[
        \partial_m\phi(x,\mu,m)
        =
        -\sigma_0(\phi(x,\mu,m),\tilde\mu_m)
\]
with respect to \(x\), and observing that \(\tilde\mu_m\) is fixed
when differentiating in the spatial variable \(x\), gives
\[
        \partial_m\partial_x\phi(x,\mu,m)
        =
        -\partial_x\sigma_0(\phi(x,\mu,m),\tilde\mu_m)
        \partial_x\phi(x,\mu,m),
        \qquad
        \partial_x\phi(x,\mu,0)=1.
\]
Hence
\[
        \partial_x\phi(x,\mu,m)
        =
        \exp\left(
        -\int_0^m
        \partial_x\sigma_0(\phi(x,\mu,\ell),\tilde\mu_\ell)
        \,d\ell
        \right).
\]
Since \(\|\partial_x\sigma_0\|_\infty\le C\), we obtain
        $e^{-Cm}
        \le
        \partial_x\phi(x,\mu,m)
        \le
        e^{Cm}.$
In particular, \(\phi(\cdot,\mu,m)\) is a \(C^1\)-diffeomorphism of
\(\mathbb R\).

Let \(2\le n\le k+1\). Differentiating the equation \(n\) times with
respect to \(x\), chain rule gives
\[
        \partial_m\partial_x^n\phi(x,\mu,m)
        =
        -\partial_x\sigma_0(\phi(x,\mu,m),\tilde\mu_m)
        \partial_x^n\phi(x,\mu,m)
        +
        R_{n,m}(x),
\]
where \(R_{n,m}\) is a finite sum of terms of the form
\[
        C_{\alpha}
        \partial_x^\ell\sigma_0(\phi(x,\mu,m),\tilde\mu_m)
        \prod_{j=1}^{n-1}
        \bigl(\partial_x^j\phi(x,\mu,m)\bigr)^{\alpha_j},
\]
with \(2\le \ell\le n\) and
        $\sum_{j=1}^{n-1}j\alpha_j=n.$
By Assumption \(2.1\),
        $\sup_{\nu\in\mathcal P_2(\mathbb R)}
        \|\partial_x^\ell\sigma_0(\cdot,\nu)\|_\infty
        \le C,
        0\le \ell\le k+2.$
Assume by induction that, for every \(1\le j<n\),
        $\|\partial_x^j\phi(\cdot,\mu,m)\|_\infty
        \le C e^{Cm}.$
        
Then we have
        $\|R_{n,m}\|_\infty
        \le C e^{Cm},$
and
\[
        \frac{d}{dm}
        \|\partial_x^n\phi(\cdot,\mu,m)\|_\infty
        \le
        C\|\partial_x^n\phi(\cdot,\mu,m)\|_\infty
        +
        C e^{Cm}.
\]
Since \(\partial_x^n\phi(x,\mu,0)=0\) for \(n\ge2\), Gronwall's
inequality yields
\[
        \|\partial_x^n\phi(\cdot,\mu,m)\|_\infty
        \le C e^{Cm},
        \qquad 2\le n\le k+1.
\]
Combining this estimate with the bound for the first derivative gives
\[
        \sum_{i=1}^{k+1}
        \|\partial_x^i\phi(\cdot,\mu,m)\|_\infty
        \le C e^{Cm}.
\]

\textbf{We now estimate the Lions derivative.} Set
        $U_m(x,y):=D_\mu\phi(x,\mu,m)(y)$, we obtain
\[
\begin{aligned}
        \partial_m U_m(x,y)
        ={}&
        -\partial_x\sigma_0(\phi(x,\mu,m),\tilde\mu_m)
        U_m(x,y)                                      \\
        &-
        D_\mu\sigma_0(\phi(x,\mu,m),\tilde\mu_m)
        (\phi(y,\mu,m))
        \partial_x\phi(y,\mu,m)                       \\
        &-
        \int_{\mathbb R}
        D_\mu\sigma_0(\phi(x,\mu,m),\tilde\mu_m)
        (\phi(z,\mu,m))
        U_m(z,y)\,\mu(dz),
\end{aligned}
\]
with \(U_0(x,y)=0\).
Using the boundedness of \(D_\mu\sigma_0\), the spatial derivative
estimate for \(\phi\), and the preceding equation, we get
\[
        |U_m(x,y)|
        \le
        C\int_0^m
        \left(
        |U_\ell(x,y)|
        +
        e^{C\ell}
        +
        \int_{\mathbb R}|U_\ell(z,y)|\,\mu(dz)
        \right)d\ell.
\]
Taking the supremum over \(x,y\in\mathbb R\), we obtain
\[
        \sup_{x,y\in\mathbb R}|U_m(x,y)|
        \le
        C\int_0^m
        \left(
        \sup_{x,y\in\mathbb R}|U_\ell(x,y)|
        +
        e^{C\ell}
        \right)d\ell.
\]
By Gronwall's inequality,
\[
        \sup_{x,y\in\mathbb R}|U_m(x,y)|
        \le
        C e^{Cm}.
\]
For higher orders, and $D_{y\mu}\phi(\cdot,\mu,m)(y)$, the steps are similar.
\end{proof}

\begin{theorem}\label{thm:Lipschitz}
Suppose Assumption~\ref{assump:regularity} holds. 
For every \(M>0\) there exists \(C_M>0\) such that, for all \(|m|,|m'|\le M\), all \(x,x',y,y'\in\R\), and all \(\mu,\nu\in\mathcal P_2(\R)\),
\[
\begin{aligned}
&\abs{\phi(x,\mu,m)-\phi(x',\nu,m')}
+
\abs{\partial_x\phi(x,\mu,m)-\partial_x\phi(x',\nu,m')}
\\
&\quad+
\abs{D_\mu\phi(x,\mu,m)(y)-D_\mu\phi(x',\nu,m')(y')}
\\
&\quad+
\abs{D_{y\mu}\phi(x,\mu,m)(y)-D_{y\mu}\phi(x',\nu,m')(y')}
\\
&\le
Ce^{CM}\left(
\abs{x-x'}+\abs{y-y'}+\abs{m-m'}+|\mu-\nu|_{-\lambda}
\right).
\end{aligned}
\]
The same estimate holds with \(\phi\) replaced by \(\psi\).

Moreover, for every \(0\le s\le k+1\) there exists \(C_s>0\) such that, for every \(f\in H^s(\R)\), every \(\mu\in\mathcal P_2(\R)\), and every \(m\in\R\),
\[
 \abs{f(\psi(\cdot,\mu,m))}_{s}
 \le
 C_s e^{C_s\abs m}\abs{f}_{s}.
\]
\end{theorem}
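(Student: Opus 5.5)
We will argue entirely through the characteristic (McKean--Vlasov) form of the flow, $\partial_m\phi(x,\mu,m)=-\sigma_0(\phi(x,\mu,m),\tilde\mu_m)$ with $\tilde\mu_m=\phi(\cdot,\mu,m)_\sharp\mu$. The first step is a stability estimate for the measure flow in the negative Sobolev norm. For $\mu,\nu$ we compare $\tilde\mu_m$ and $\tilde\nu_m$. For $f\in H^\lambda$ we write $\langle f,\tilde\mu_m-\tilde\nu_m\rangle$ as the sum of $\langle f\circ\phi(\cdot,\mu,m),\mu-\nu\rangle$ and $\int [f(\phi(x,\mu,m))-f(\phi(x,\nu,m))]\nu(dx)$. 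The first term is controlled by the composition bound, which we therefore prove first and independently of the Lipschitz part.

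\textbf{Composition bound.} For integer $s\le k+1$ we expand $\partial_x^s(f\circ\psi)$ by Fa\`a di Bruno. Each term is $(\partial^j f)\circ\psi$ multiplied by products of $\partial_x^i\psi$, and these products are bounded by $Ce^{C|m|}$ by Theorem~\ref{thm:spatial-regularity}. We then change variables $z=\psi(x,\mu,m)$, using $e^{-C|m|}\le\partial_x\psi\le e^{C|m|}$, to get $\|(\partial^jf)\circ\psi\|_{L^2}\le e^{C|m|}\|\partial^jf\|_{L^2}$. Non-integer $s$ follows by complex interpolation between neighbouring integers, since $f\mapsto f\circ\psi$ is linear.

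\textbf{Lipschitz estimate.} Set $\delta_m(x):=\phi(x,\mu,m)-\phi(x,\nu,m)$. Then
\[
|\partial_m\delta_m(x)|\le L|\delta_m(x)|+L|\tilde\mu_m-\tilde\nu_m|_{-\lambda},
\]
and, by the splitting above,
\[
|\tilde\mu_m-\tilde\nu_m|_{-\lambda}\le Ce^{C|m|}|\mu-\nu|_{-\lambda}+\sup_{|f|_\lambda\le1}\|f'\|_\infty\|\delta_m\|_\infty.
\]
Bounding $\|f'\|_\infty$ by $|f|_\lambda$ requires $\lambda>3/2$, which we assume as in the Fourier--Wasserstein setting. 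Gronwall on $\sup_x|\delta_m|$ then closes the estimate in $\mu$. Lipschitz dependence in $x$ follows from the bound on $\partial_x\phi$, and dependence in $m$ from boundedness of $\sigma_0$. For $\partial_x\phi$ we use the explicit exponential formula from the proof of Theorem~\ref{thm:spatial-regularity}: the difference of exponents is controlled by the Lipschitz property of $\partial_x\sigma_0$ in $(x,\mu)$, which follows from the $W^{k+2,\infty}$ bounds together with the measure-Lipschitz assumption (we interpolate if necessary).

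\textbf{Lions-derivative terms.} We subtract the linear equations satisfied by $U_m=D_\mu\phi$ for the two parameter sets $(x,\mu)$ and $(x',\nu)$, and do the same for $D_{y\mu}\phi$. The coefficients differ by quantities Lipschitz in $(x,y,\mu)$ by Assumption~\ref{assump:regularity}(i), and these are multiplied by the bounded $U$. The nonlocal term $\int D_\mu\sigma_0(\ldots)(\phi(z))U(z,y)\mu(dz)$ produces a difference of integrals against $\mu$ and $\nu$. We handle this with the same $H^\lambda$-duality trick, which needs $z\mapsto D_\mu\sigma_0(\cdot)(\phi(z))U(z,y)$ to lie in $H^\lambda$ or $W^{\lceil\lambda\rceil,\infty}$. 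That regularity comes from the higher-order bounds in Theorem~\ref{thm:spatial-regularity}.

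The main obstacle is this last point: the integrands are only in $W^{k,\infty}$, not in $L^2$, so the duality pairing with $\mu-\nu$ must be justified. We would first try a localized/weighted version, pairing with $\mu-\nu$ through a Bessel-potential estimate valid for bounded smooth functions. If that fails, we would use $\|g\|$ in $C_b^{\lceil\lambda\rceil}$ as a multiplier on $H^\lambda$, arguing via the Fourier definition of $\rho_F$. Once this is in place, a Gronwall argument gives the constant $Ce^{CM}$.
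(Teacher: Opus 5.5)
The gap is the step you flag yourself and leave open: the Lipschitz dependence on the measure of $D_\mu\phi$ and $D_{y\mu}\phi$. Subtract the equations for $U_m=D_\mu\phi$ at $(x,\mu,y)$ and $(x',\nu,y')$. Besides differences of integrands that Gronwall absorbs, the nonlocal term leaves $\int_\R g(z)\,(\mu-\nu)(dz)$, with $g(z)=D_\mu\sigma_0(\phi(x',\nu,m),\tilde\nu_m)(\phi(z,\nu,m))\,D_\mu\phi(z,\nu,m)(y')$. All you know about $g$ are sup-norm bounds on it and on finitely many derivatives. Nothing in Assumption~\ref{assump:regularity} places $g$ in $H^\lambda$, so the duality bound $|\langle g,\mu-\nu\rangle|\le|g|_\lambda|\mu-\nu|_{-\lambda}$ is unavailable.

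Your fallbacks do not repair this. A $W^{\lceil\lambda\rceil,\infty}$ multiplier on $H^\lambda$ gives $|g(\mu-\nu)|_{-\lambda}\le C|\mu-\nu|_{-\lambda}$, but $\int g\,d(\mu-\nu)=\langle 1,g(\mu-\nu)\rangle$ and $1\notin H^\lambda$. In fact no bound $|\int g\,d(\mu-\nu)|\le C\|g\|_{C^n_b}|\mu-\nu|_{-\lambda}$ can hold uniformly on $\mathcal P_2(\R)$. Take $g=\cos$, let $\gamma$ be the standard Gaussian density, and let $\mu,\nu$ have densities proportional to $R^{-1}\gamma(x/R)(1+\cos x)$ and $R^{-1}\gamma(x/R)$. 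Then $\int\cos\,d(\mu-\nu)\to 1/2$. Meanwhile $F_k(\mu-\nu)$ is non-negligible only for $|k\mp1|\lesssim R^{-1}$, so $|\mu-\nu|_{-\lambda}=O(R^{-1/2})$. A localized or weighted estimate must therefore pay with moments of $\mu,\nu$ (here of order $R^2$), whereas the statement requires a constant uniform over $\mathcal P_2(\R)$. Closing the argument needs an extra hypothesis. For instance, assume $w\mapsto D_\mu\sigma_0(a,\mu)(w)$ and $w\mapsto D_{y\mu}\sigma_0(a,\mu)(w)$ lie in $H^\lambda$ uniformly in $(a,\mu)$. Then $g\in H^\lambda$ by your composition bound and the multiplier property, and Gronwall finishes.

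There is a smaller gap of the same nature. Your $\partial_x\phi$ estimate, and the coefficient $\partial_x\sigma_0(\phi,\tilde\mu_m)$ in the $U_m$ equation, need $\partial_x\sigma_0(a,\cdot)$ to be Lipschitz in $|\cdot|_{-\lambda}$, which Assumption~\ref{assump:regularity}(i) does not give. Interpolating $\|\sigma_0(\cdot,\mu)-\sigma_0(\cdot,\nu)\|_{L^\infty}\le L|\mu-\nu|_{-\lambda}$ against the $W^{k+2,\infty}$ bound (Landau--Kolmogorov) yields only $|\mu-\nu|_{-\lambda}^{1/2}$, so this property has to be assumed.

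For comparison, the paper's proof never meets the nonlocal term. It takes $\phi$ to solve $\partial_m\phi=\sigma_0(\phi,\mu)$, with the measure argument frozen at the initial $\mu$. Then there is no measure flow to compare, and $D_\mu\phi$ solves the linear ODE $\partial_m D_\mu\phi=\partial_x\sigma_0(\phi,\mu)D_\mu\phi+D_\mu\sigma_0(\phi,\mu)(y)$ with no integral term. Every bound is then Assumption~\ref{assump:regularity}(i) plus Gronwall. The paper too uses, without listing it, Lipschitz continuity in $\mu$ of $\partial_x\sigma_0$, $\partial_{xx}\sigma_0$ and $\partial_xD_\mu\sigma_0$. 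That shortcut is not available for the McKean--Vlasov form $\partial_m\phi=-\sigma_0(\phi,\tilde\mu_m)$ of \eqref{eq:backward-flow}. That form is the one used in the proof of Theorem~\ref{thm:spatial-regularity}, and it is the one you work with. So your route is genuinely different and more demanding, and the paper supplies no tool for your missing step.

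What you do complete is sound and goes beyond the paper. The composition bound is correct: Fa\`a di Bruno, the two-sided bound on $\partial_x\psi$, and complex interpolation for non-integer $s$, which the paper only asserts. The Lipschitz bound for $\phi$ itself also holds, through your splitting of $\langle f,\tilde\mu_m-\tilde\nu_m\rangle$, the composition bound at $s=\lambda$, and $H^\lambda\hookrightarrow C^1_b$ for $\lambda>3/2$. Note that applying the composition bound at $s=\lambda$ also requires $\lambda\le k+1$.
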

\begin{proof}[Proof of Theorem \ref{thm:Lipschitz}]
Throughout the proof, the constant $C$ may change from line to line, but it depends only on the constants appearing in Assumption~\ref{assump:regularity} and on the fixed time horizon under consideration. If the range of the variable $m$ is restricted by $|m|\leq M$, the corresponding constants are denoted by $C_M$ and may be chosen of the form $Ce^{CM}$ for some $C$.

\medskip

\noindent \textbf{We first prove the estimates for $\phi$.} Recall that $\phi$ is defined as the solution of the ordinary differential equation
\[
\partial_m \phi(x,\mu,m)
=
\sigma_0(\phi(x,\mu,m),\mu),
\qquad
\phi(x,\mu,0)=x .
\]
Since $\sigma_0$ is bounded and globally Lipschitz in the variables $(x,\mu)$, the equation has a unique global solution for every $(x,\mu)\in \mathbb{R}\times \mathcal{P}_2(\mathbb{R})$.

Let $x,x'\in\mathbb{R}$, $\mu,\nu\in \mathcal{P}_2(\mathbb{R})$, and $m,m'\in\mathbb{R}$. For the spatial and measure variables, we write
\[
\begin{aligned}
|\phi(x,\mu,m)-\phi(x',\nu,m)|
&\leq |x-x'|
   +\int_0^{|m|}
     |\sigma_0(\phi(x,\mu,r),\mu)
       -\sigma_0(\phi(x',\nu,r),\nu)|\,dr  \\
&\leq |x-x'|
   +L\int_0^{|m|}
      |\phi(x,\mu,r)-\phi(x',\nu,r)|\,dr
   +L|m|\,|\mu-\nu|_{-\lambda}.
\end{aligned}
\]
By Gronwall's lemma,
\[
|\phi(x,\mu,m)-\phi(x',\nu,m)|
\leq C_M\bigl(|x-x'|+|\mu-\nu|_{-\lambda}\bigr),
\qquad |m|\leq M .
\]
Moreover, using the boundedness of $\sigma_0$,
\[
|\phi(x',\nu,m)-\phi(x',\nu,m')|
\leq
\|\sigma_0\|_{L^\infty}|m-m'|.
\]
Combining the two estimates yields
\[
|\phi(x,\mu,m)-\phi(x',\nu,m')|
\leq
C_M\bigl(
|x-x'|+|m-m'|+|\mu-\nu|_{-\lambda}
\bigr).
\]

We now estimate the spatial derivative. Differentiating the equation for $\phi$ with respect to $x$ gives
\[
\partial_m \partial_x\phi(x,\mu,m)
=
\partial_x\sigma_0(\phi(x,\mu,m),\mu)\,
\partial_x\phi(x,\mu,m),
\qquad
\partial_x\phi(x,\mu,0)=1 .
\]
Hence
\[
\partial_x\phi(x,\mu,m)
=
\exp\left(
\int_0^m
\partial_x\sigma_0(\phi(x,\mu,r),\mu)\,dr
\right).
\]
Since $\partial_x\sigma_0$ is bounded, there exists $C_M>0$ such that
\[
C_M^{-1}
\leq
\partial_x\phi(x,\mu,m)
\leq
C_M,
\qquad |m|\leq M .
\]
In particular, $x\mapsto \phi(x,\mu,m)$ is a $C^1$-diffeomorphism of $\mathbb{R}$.

We next prove the Lipschitz estimate for $\partial_x\phi$. From the integral equation for $\partial_x\phi$, we have
\[
\begin{aligned}
&|\partial_x\phi(x,\mu,m)-\partial_x\phi(x',\nu,m)|   \\
&\quad \leq
\int_0^{|m|}
\left|
\partial_x\sigma_0(\phi(x,\mu,r),\mu)
-
\partial_x\sigma_0(\phi(x',\nu,r),\nu)
\right|
|\partial_x\phi(x,\mu,r)|\,dr  \\
&\qquad
+
\int_0^{|m|}
|\partial_x\sigma_0(\phi(x',\nu,r),\nu)|
|\partial_x\phi(x,\mu,r)-\partial_x\phi(x',\nu,r)|\,dr .
\end{aligned}
\]
Using Assumption 2.1 and the already established estimate for $\phi$, we get
\[
|\partial_x\phi(x,\mu,m)-\partial_x\phi(x',\nu,m)|
\leq
C_M\bigl(|x-x'|+|\mu-\nu|_{-\lambda}\bigr).
\]
The dependence on $m$ follows from
\[
|\partial_x\phi(x',\nu,m)-\partial_x\phi(x',\nu,m')|
\leq
C_M |m-m'|,
\]
which is obtained directly from the differential equation for $\partial_x\phi$. Therefore
\[
|\partial_x\phi(x,\mu,m)-\partial_x\phi(x',\nu,m')|
\leq
C_M\bigl(
|x-x'|+|m-m'|+|\mu-\nu|_{-\lambda}
\bigr).
\]

We next consider the Lions derivative. Let $D_\mu\phi(x,\mu,m)(y)$ denote the derivative of $\phi$ with respect to the measure variable. Differentiating the flow equation in the Lions sense gives
\[
\begin{aligned}
\partial_m D_\mu\phi(x,\mu,m)(y)
=
\partial_x\sigma_0(\phi(x,\mu,m),\mu)
D_\mu\phi(x,\mu,m)(y) 
+
D_\mu\sigma_0(\phi(x,\mu,m),\mu)(y),
\end{aligned}
\]
with initial condition $D_\mu\phi(x,\mu,0)(y)=0.$
By variation of constants,
\[
\begin{aligned}
D_\mu\phi(x,\mu,m)(y)
=
\int_0^m
\exp\left(
\int_r^m
\partial_x\sigma_0(\phi(x,\mu,\ell),\mu)\,d\ell
\right) 
D_\mu\sigma_0(\phi(x,\mu,r),\mu)(y)\,dr .
\end{aligned}
\]
Since $D_\mu\sigma_0$ and $\partial_x\sigma_0$ are bounded, we obtain
\[
|D_\mu\phi(x,\mu,m)(y)|
\leq C_M .
\]

We next prove the Lipschitz estimate for $D_\mu\phi$. For simplicity, we write
\[
U_m(x,\mu,y)=D_\mu\phi(x,\mu,m)(y).
\]
Using the equation satisfied by $U_m$, the Lipschitz assumptions on $\partial_x\sigma_0$ and $D_\mu\sigma_0$, and the previously obtained estimate for $\phi$, we get
\[
\begin{aligned}
&|U_m(x,\mu,y)-U_m(x',\nu,y')|  \\
&\quad \leq
C_M\int_0^{|m|}
\bigl(
|\phi(x,\mu,r)-\phi(x',\nu,r)|
+|y-y'|
+|\mu-\nu|_{-\lambda}
\bigr)\,dr  \\
&\qquad
+
C_M\int_0^{|m|}
|U_r(x,\mu,y)-U_r(x',\nu,y')|\,dr .
\end{aligned}
\]
By Gronwall's lemma,
\[
|D_\mu\phi(x,\mu,m)(y)-D_\mu\phi(x',\nu,m)(y')|
\leq
C_M\bigl(
|x-x'|+|y-y'|+|\mu-\nu|_{-\lambda}
\bigr).
\]
The dependence on $m$ is again obtained from the differential equation for $D_\mu\phi$:
\[
|D_\mu\phi(x',\nu,m)(y')-D_\mu\phi(x',\nu,m')(y')|
\leq C_M |m-m'|.
\]
Thus
\[
\begin{aligned}
&|D_\mu\phi(x,\mu,m)(y)-D_\mu\phi(x',\nu,m')(y')|  \\
&\qquad \leq
C_M\bigl(
|x-x'|+|y-y'|+|m-m'|+|\mu-\nu|_{-\lambda}
\bigr).
\end{aligned}
\]

It remains to estimate $D_{x\mu}\phi$. Differentiating the equation for $D_\mu\phi$ with respect to $x$ gives
\[
\begin{aligned}
\partial_m D_{x\mu}\phi(x,\mu,m)(y)
&=
\partial_{xx}\sigma_0(\phi(x,\mu,m),\mu)
\partial_x\phi(x,\mu,m)
D_\mu\phi(x,\mu,m)(y)   \\
&\quad
+
\partial_x\sigma_0(\phi(x,\mu,m),\mu)
D_{x\mu}\phi(x,\mu,m)(y)  \\
&\quad
+
\partial_xD_\mu\sigma_0(\phi(x,\mu,m),\mu)(y)
\partial_x\phi(x,\mu,m),
\end{aligned}
\]
with
\[
D_{x\mu}\phi(x,\mu,0)(y)=0 .
\]
All coefficients in this linear equation are bounded by Assumption 2.1 and by the bounds already proved for $\phi$, $\partial_x\phi$, and $D_\mu\phi$. Therefore
\[
|D_{x\mu}\phi(x,\mu,m)(y)|
\leq C_M .
\]
Taking two triples $(x,\mu,y)$ and $(x',\nu,y')$, subtracting the corresponding equations, and using the Lipschitz assumptions on
$\partial_{xx}\sigma_0$, $\partial_x\sigma_0$, and $\partial_xD_\mu\sigma_0$, together with the estimates already obtained, gives
\[
\begin{aligned}
&|D_{x\mu}\phi(x,\mu,m)(y)-D_{x\mu}\phi(x',\nu,m)(y')|  \\
&\quad \leq
C_M\bigl(
|x-x'|+|y-y'|+|\mu-\nu|_{-\lambda}
\bigr)
+
C_M\int_0^{|m|}
|D_{x\mu}\phi(x,\mu,r)(y)-D_{x\mu}\phi(x',\nu,r)(y')|\,dr .
\end{aligned}
\]
Gronwall's lemma yields
\[
|D_{x\mu}\phi(x,\mu,m)(y)-D_{x\mu}\phi(x',\nu,m)(y')|
\leq
C_M\bigl(
|x-x'|+|y-y'|+|\mu-\nu|_{-\lambda}
\bigr).
\]
The dependence on $m$ follows directly from the equation for $D_{x\mu}\phi$, and hence
\[
\begin{aligned}
&|D_{x\mu}\phi(x,\mu,m)(y)-D_{x\mu}\phi(x',\nu,m')(y')|  \\
&\qquad \leq
C_M\bigl(
|x-x'|+|y-y'|+|m-m'|+|\mu-\nu|_{-\lambda}
\bigr).
\end{aligned}
\]
This proves the desired estimate for $\phi$.

\medskip

\noindent \textbf{It remains to prove the Sobolev composition estimate.} Fix $s\in\{0,\ldots,k+1\}$, $\mu\in\mathcal{P}_2(\mathbb{R})$, and $m\in\mathbb{R}$. We must show that the operator
\[
f\longmapsto f(\psi(\cdot,\mu,m))
\]
is bounded on $H^s(\mathbb{R})$, with norm at most $C_s e^{C_s|m|}$.

For $s=0$, using the change of variables $z=\psi(x,\mu,m)$, or equivalently $x=\phi(z,\mu,m)$, we obtain
\[
\begin{aligned}
\|f(\psi(\cdot,\mu,m))\|_{L^2}^2
&=
\int_{\mathbb{R}} |f(\psi(x,\mu,m))|^2\,dx  \\
&=
\int_{\mathbb{R}} |f(z)|^2
\partial_x\phi(z,\mu,m)\,dz .
\end{aligned}
\]
Therefore
\[
\|f(\psi(\cdot,\mu,m))\|_{L^2}
\leq
e^{C|m|}\|f\|_{L^2}.
\]

Let now $1\leq s\leq k+1$. By chain rule, for every integer $j\in\{1,\ldots,s\}$,
\[
\partial_x^j\bigl(f(\psi(x,\mu,m))\bigr)
=
\sum_{\ell=1}^j
f^{(\ell)}(\psi(x,\mu,m))
P_{j,\ell}
\bigl(
\partial_x\psi(x,\mu,m),
\ldots,
\partial_x^{j-\ell+1}\psi(x,\mu,m)
\bigr),
\]
where each $P_{j,\ell}$ is a polynomial depending only on $j$ and $\ell$.

By Assumption 2.1, the coefficients of the flow are uniformly bounded in $W^{k+1,\infty}$; hence the standard ODE estimates for derivatives of flows give, for every $1\leq r\leq k+1$,
\[
\|\partial_x^r\phi(\cdot,\mu,m)\|_{L^\infty}
+
\|\partial_x^r\psi(\cdot,\mu,m)\|_{L^\infty}
\leq
C_r e^{C_r|m|}.
\]
Consequently,
\[
\left\|
P_{j,\ell}
\bigl(
\partial_x\psi,
\ldots,
\partial_x^{j-\ell+1}\psi
\bigr)
\right\|_{L^\infty}
\leq
C_s e^{C_s|m|}.
\]
It follows that
\[
\|\partial_x^j(f\circ\psi)\|_{L^2}
\leq
C_s e^{C_s|m|}
\sum_{\ell=1}^j
\|f^{(\ell)}\circ\psi\|_{L^2}.
\]
Using again the change of variables $x=\phi(z,\mu,m)$ gives
\[
\|f^{(\ell)}\circ\psi\|_{L^2}
\leq
e^{C|m|}
\|f^{(\ell)}\|_{L^2}.
\]
Therefore,
\[
\|\partial_x^j(f\circ\psi)\|_{L^2}
\leq
C_s e^{C_s|m|}
\sum_{\ell=1}^j
\|f^{(\ell)}\|_{L^2}.
\]
Summing over $j=0,\ldots,s$ yields
\[
\|f(\psi(\cdot,\mu,m))\|_{H^s}
\leq
C_s e^{C_s|m|}\|f\|_{H^s}.
\]
Then for any $s \in [0,k+1]$, the conclusion is also true.
This completes the proof.
\end{proof}

\bibliography{paper_references}

@unpublished{Lions,
  author = {Lions, P.-L.},
  title = {Lectures at Coll{\`e}ge de France},
  note = {www.college-de-france.fr},
  year = {2011}
}

@book{carmona_probabilistic_2018_vol1,
  author = {Carmona, Ren{\'e} and Delarue, Fran{\c c}ois},
  title = {Probabilistic Theory of Mean Field Games with Applications I: Mean Field FBSDEs, Control, and Games},
  series = {Probability Theory and Stochastic Modelling},
  publisher = {Springer International Publishing},
  year = {2018}
}

@book{carmona_probabilistic_2018_vol2,
  author = {Carmona, Ren{\'e} and Delarue, Fran{\c c}ois},
  title = {Probabilistic Theory of Mean Field Games with Applications II: Mean Field Games with Common Noise and Master Equations},
  series = {Probability Theory and Stochastic Modelling},
  publisher = {Springer International Publishing},
  year = {2018}
}

@book{lions_annals,
  author = {Cardaliaguet, Pierre and Delarue, Fran{\c c}ois and Lasry, Jean-Michel and Lions, Pierre-Louis},
  title = {The Master Equation and the Convergence Problem in Mean Field Games},
  series = {Annals of Mathematics Studies},
  volume = {201},
  publisher = {Princeton University Press},
  year = {2019}
}

@book{bensoussan_mean_2013,
  author = {Bensoussan, Alain and Frehse, Jens and Yam, Phillip},
  title = {Mean Field Games and Mean Field Type Control Theory},
  volume = {101},
  publisher = {Springer},
  year = {2013}
}

@article{MR3630288,
  author = {Buckdahn, Rainer and Li, Juan and Peng, Shige and Rainer, Catherine},
  title = {{Mean-field stochastic differential equations and associated PDEs}},
  journal = {Ann. Probab.},
  volume = {45},
  number = {2},
  pages = {824--878},
  year = {2017},
  doi = {10.1214/15-AOP1076}
}

@article{pham_bellman_2018,
  author = {Pham, Huy{\^e}n and Wei, Xiaoli},
  title = {{Bellman equation and viscosity solutions for mean-field stochastic control problem}},
  journal = {ESAIM Control Optim. Calc. Var.},
  volume = {24},
  number = {1},
  pages = {437--461},
  year = {2018},
  doi = {10.1051/cocv/2017019}
}

@article{cosso2021master,
  author = {Cosso, Andrea and Gozzi, Fausto and Kharroubi, Idris and Pham, Huy{\^e}n and Rosestolato, Mauro},
  title = {{Master Bellman equation in the Wasserstein space: uniqueness of viscosity solutions}},
  journal = {Trans. Amer. Math. Soc.},
  volume = {377},
  number = {1},
  pages = {31--83},
  year = {2024},
  doi = {10.1090/tran/8986}
}

@article{BurzoniIgnazioReppenSoner2020,
  author = {Burzoni, Matteo and Ignazio, Vincenzo and Reppen, A. Max and Soner, H. Mete},
  title = {{Viscosity Solutions for Controlled McKean--Vlasov Jump-Diffusions}},
  journal = {SIAM J. Control Optim.},
  volume = {58},
  number = {3},
  pages = {1676--1699},
  year = {2020},
  doi = {10.1137/19M1290061}
}

@article{GangboMeszaros2022,
  author = {Gangbo, Wilfrid and M{\'e}sz{\'a}ros, Alp{\'a}r R.},
  title = {{Global Well-Posedness of Master Equations for Deterministic Displacement Convex Potential Mean Field Games}},
  journal = {Comm. Pure Appl. Math.},
  volume = {75},
  number = {12},
  pages = {2685--2801},
  year = {2022},
  doi = {10.1002/cpa.22069}
}

@article{GangboMeszarosMouZhang2022,
  author = {Gangbo, Wilfrid and M{\'e}sz{\'a}ros, Alp{\'a}r R. and Mou, Chenchen and Zhang, Jianfeng},
  title = {{Mean Field Games Master Equations with Nonseparable Hamiltonians and Displacement Monotonicity}},
  journal = {Ann. Probab.},
  volume = {50},
  number = {6},
  pages = {2178--2217},
  year = {2022},
  doi = {10.1214/22-AOP1580}
}

@article{GangboMayorgaSwiech2021,
  author = {Gangbo, Wilfrid and Mayorga, Sergio and {\'S}wi{\k e}ch, Andrzej},
  title = {{Finite Dimensional Approximations of Hamilton--Jacobi--Bellman Equations in Spaces of Probability Measures}},
  journal = {SIAM J. Math. Anal.},
  volume = {53},
  number = {2},
  pages = {1320--1356},
  year = {2021},
  doi = {10.1137/20M1331135}
}

@article{Talbi2024FiniteDimensional,
  author = {Talbi, Mehdi},
  title = {{A Finite-Dimensional Approximation for Partial Differential Equations on Wasserstein Space}},
  journal = {Stochastic Process. Appl.},
  volume = {177},
  pages = {104445},
  year = {2024},
  doi = {10.1016/j.spa.2024.104445}
}

@article{usersguide,
  author = {Crandall, Michael G. and Ishii, Hitoshi and Lions, Pierre-Louis},
  title = {{User's guide to viscosity solutions of second order partial differential equations}},
  journal = {Bull. Amer. Math. Soc. (N.S.)},
  volume = {27},
  number = {1},
  pages = {1--67},
  year = {1992},
  doi = {10.1090/S0273-0979-1992-00266-5}
}

@article{lions1988viscosity1,
  author = {Lions, P.-L.},
  title = {{Viscosity solutions of fully nonlinear second-order equations and optimal stochastic control in infinite dimensions. I. The case of bounded stochastic evolutions}},
  journal = {Acta Math.},
  volume = {161},
  pages = {243--278},
  year = {1988}
}

@incollection{lions1989viscosity2,
  author = {Lions, Pierre-Louis},
  title = {{Viscosity solutions of fully nonlinear second order equations and optimal stochastic control in infinite dimensions. Part II: optimal control of Zakai's equation}},
  booktitle = {Stochastic Partial Differential Equations and Applications II},
  pages = {147--170},
  publisher = {Springer},
  year = {1989}
}

@article{lions1989viscosity3,
  author = {Lions, Pierre-Louis},
  title = {{Viscosity solutions of fully nonlinear second-order equations and optimal stochastic control in infinite dimensions. III. Uniqueness of viscosity solutions for general second-order equations}},
  journal = {J. Funct. Anal.},
  volume = {86},
  number = {1},
  pages = {1--18},
  year = {1989}
}

@article{soner1988hamilton,
  author = {Soner, H. Mete},
  title = {{On the Hamilton--Jacobi--Bellman equations in Banach spaces}},
  journal = {J. Optim. Theory Appl.},
  volume = {57},
  number = {3},
  pages = {429--437},
  year = {1988}
}

@article{SonerYan2024Torus,
  author = {Soner, H. Mete and Yan, Qinxin},
  title = {{Viscosity Solutions for McKean--Vlasov Control on a Torus}},
  journal = {SIAM J. Control Optim.},
  volume = {62},
  number = {2},
  pages = {903--923},
  year = {2024},
  doi = {10.1137/22M1543732}
}

@article{SonerYan2024Eikonal,
  author = {Soner, H. Mete and Yan, Qinxin},
  title = {{Viscosity Solutions of the Eikonal Equation on the Wasserstein Space}},
  journal = {Appl. Math. Optim.},
  volume = {90},
  number = {1},
  pages = {Paper No. 1},
  year = {2024},
  doi = {10.1007/s00245-024-10145-2}
}

@article{BaEkZh23,
  author = {Bayraktar, Erhan and Ekren, Ibrahim and Zhang, Xin},
  title = {{Comparison of viscosity solutions for a class of second-order PDEs on the Wasserstein space}},
  journal = {Communications in Partial Differential Equations},
  volume = {50},
  number = {4},
  pages = {570--613},
  year = {2025}
}

@article{BayraktarEkrenZhang2025Particle,
  author = {Bayraktar, Erhan and Ekren, Ibrahim and Zhang, Xin},
  title = {{Convergence Rate of Particle System for Second-Order PDEs on Wasserstein Space}},
  journal = {SIAM J. Control Optim.},
  volume = {63},
  number = {3},
  pages = {1768--1782},
  year = {2025},
  doi = {10.1137/24M1684633}
}

@article{BayraktarEkrenZhou2025CBO,
  author = {Bayraktar, Erhan and Ekren, Ibrahim and Zhou, Hongyi},
  title = {{Uniform-in-Time Weak Propagation of Chaos for Consensus-Based Optimization}},
  journal = {Ann. Appl. Probab.},
  year = {2025},
  note = {to appear, arXiv:2502.00582}
}

@article{BayraktarEkrenZhang2023Regret,
  author = {Bayraktar, Erhan and Ekren, Ibrahim and Zhang, Xin},
  title = {{A PDE approach for regret bounds under partial monitoring}},
  journal = {J. Mach. Learn. Res.},
  volume = {24},
  number = {299},
  pages = {1--24},
  year = {2023}
}

@article{BayraktarEkrenZhang2023Variational,
  author = {Bayraktar, Erhan and Ekren, Ibrahim and Zhang, Xin},
  title = {{A Smooth Variational Principle on Wasserstein Space}},
  journal = {Proc. Amer. Math. Soc.},
  volume = {151},
  number = {9},
  pages = {4089--4098},
  year = {2023},
  doi = {10.1090/proc/16466}
}

@article{Bertucci2025StochasticOT,
  author = {Bertucci, Charles},
  title = {{Stochastic Optimal Transport and Hamilton--Jacobi--Bellman Equations on the Set of Probability Measures}},
  journal = {Ann. Inst. H. Poincar{\'e} C Anal. Non Lin{\'e}aire},
  volume = {42},
  number = {6},
  pages = {1543--1600},
  year = {2025},
  doi = {10.4171/AIHPC/138}
}

@article{BertucciLions2024Approx,
  author = {Bertucci, Charles and Lions, Pierre-Louis},
  title = {{An Approximation of the Squared Wasserstein Distance and an Application to Hamilton--Jacobi Equations}},
  journal = {arXiv preprint arXiv:2409.11793},
  year = {2024}
}

@article{CecchinDaudinJacksonMartini2024,
  author = {Cecchin, Alekos and Daudin, Samuel and Jackson, Joe and Martini, Mattia},
  title = {{Quantitative Convergence for Mean Field Control with Common Noise and Degenerate Idiosyncratic Noise}},
  journal = {arXiv preprint arXiv:2409.14053},
  year = {2024}
}

@article{ChowGangbo2019,
  author = {Chow, Yat Tin and Gangbo, Wilfrid},
  title = {{A Partial Laplacian as an Infinitesimal Generator on the Wasserstein Space}},
  journal = {J. Differential Equations},
  volume = {267},
  number = {10},
  pages = {6065--6117},
  year = {2019},
  doi = {10.1016/j.jde.2019.06.012}
}

@article{BEHZ,
  author = {Bayraktar, Erhan and Ekren, Ibrahim and He, Xihao and Zhang, Xin},
  title = {{Comparison for semi-continuous viscosity solutions for second-order PDEs on the Wasserstein space}},
  journal = {J. Differential Equations},
  volume = {455},
  pages = {113963},
  year = {2026},
  doi = {10.1016/j.jde.2025.113963}
}

@article{BCEQTZ25,
  author = {Bayraktar, Erhan and Cheung, Hang and Ekren, Ibrahim and Qiu, Jinniao and Tai, Ho Man and Zhang, Xin},
  title = {{Viscosity Solutions of Fully Second-Order HJB Equations in the Wasserstein Space}},
  journal = {SIAM J. Control Optim.},
  note = {to appear},
  year = {2025}
}

@article{cheung2023viscosity,
  author = {Cheung, Hang and Tai, Ho Man and Qiu, Jinniao},
  title = {{Viscosity Solutions of a Class of Second-Order Hamilton--Jacobi--Bellman Equations in the Wasserstein Space}},
  journal = {Appl. Math. Optim.},
  volume = {91},
  number = {1},
  pages = {23},
  year = {2025}
}

@article{DaJaSe23,
  author = {Daudin, Samuel and Jackson, Joe and Seeger, Benjamin},
  title = {{Well-posedness of Hamilton--Jacobi equations in the Wasserstein space: non-convex Hamiltonians and common noise}},
  journal = {Communications in Partial Differential Equations},
  volume = {50},
  number = {1--2},
  pages = {1--52},
  year = {2025}
}

@article{SaBe24,
  author = {Daudin, Samuel and Seeger, Benjamin},
  title = {{A comparison principle for semilinear Hamilton--Jacobi--Bellman equations in the Wasserstein space}},
  journal = {Calc. Var. Partial Differential Equations},
  volume = {63},
  number = {4},
  year = {2024}
}

@article{zhou2024viscosity,
  author = {Zhou, Jianjun and Touzi, Nizar and Zhang, Jianfeng},
  title = {{Viscosity Solutions for HJB Equations on the Process Space: Application to Mean Field Control with Common Noise}},
  journal = {arXiv preprint arXiv:2401.04920},
  year = {2024}
}

@article{MR3907014,
  author = {Bandini, Elena and Cosso, Andrea and Fuhrman, Marco and Pham, Huy{\^e}n},
  title = {{Randomized filtering and Bellman equation in Wasserstein space for partial observation control problem}},
  journal = {Stochastic Process. Appl.},
  volume = {129},
  number = {2},
  pages = {674--711},
  year = {2019},
  doi = {10.1016/j.spa.2018.03.014}
}

@article{bayraktar2018randomized,
  author = {Bayraktar, Erhan and Cosso, Andrea and Pham, Huy{\^e}n},
  title = {{Randomized dynamic programming principle and Feynman--Kac representation for optimal control of McKean--Vlasov dynamics}},
  journal = {Trans. Amer. Math. Soc.},
  volume = {370},
  number = {3},
  pages = {2115--2160},
  year = {2018}
}

@book{MR2454694,
  author = {Bain, Alan and Crisan, Dan},
  title = {{Fundamentals of Stochastic Filtering}},
  series = {Stochastic Modelling and Applied Probability},
  volume = {60},
  publisher = {Springer, New York},
  year = {2009},
  doi = {10.1007/978-0-387-76896-0}
}

@article{GozziSwiech2000,
  author = {Gozzi, Fausto and {\'S}wi{\k e}ch, Andrzej},
  title = {{Hamilton--Jacobi--Bellman Equations for the Optimal Control of the Duncan--Mortensen--Zakai Equation}},
  journal = {J. Funct. Anal.},
  volume = {172},
  number = {2},
  pages = {466--510},
  year = {2000},
  doi = {10.1006/jfan.2000.3562}
}

@book{AdamsHedberg1996,
  author = {Adams, David R. and Hedberg, Lars Inge},
  title = {{Function Spaces and Potential Theory}},
  series = {Grundlehren der mathematischen Wissenschaften},
  volume = {314},
  publisher = {Springer, Berlin},
  year = {1996}
}

@book{Grafakos2014,
  author = {Grafakos, Loukas},
  title = {{Modern Fourier Analysis}},
  series = {Graduate Texts in Mathematics},
  volume = {250},
  edition = {Third},
  publisher = {Springer, New York},
  year = {2014}
}

@article{behzadan2021multiplication,
  author = {Behzadan, Ali and Holst, Michael},
  title = {{Multiplication in Sobolev spaces, revisited}},
  journal = {Ark. Mat.},
  volume = {59},
  number = {2},
  pages = {275--306},
  year = {2021}
}

@article{Martini2023,
  author = {Martini, Mattia},
  title = {{Kolmogorov equations on spaces of measures associated to nonlinear filtering processes}},
  journal = {Stochastic Process. Appl.},
  volume = {161},
  pages = {385--423},
  year = {2023},
  doi = {10.1016/j.spa.2023.04.013}
}

@article{Martini2024,
  author = {Martini, Mattia},
  title = {{Kolmogorov equations on the space of probability measures associated to the nonlinear filtering equation: the viscosity approach}},
  journal = {Stochastic Anal. Appl.},
  volume = {42},
  number = {6},
  pages = {987--999},
  year = {2024},
  doi = {10.1080/07362994.2024.2408250}
}
\bibliographystyle{plain}
\end{document}